\DeclareMathOperator{\Gal}{Gal}
\DeclareMathOperator{\Rep}{Rep}
\newtheorem{thm}{Theorem}[section]
\newtheorem{prop}[thm]{Proposition}
\newtheorem{defn}[thm]{Definition}
\newtheorem{lem}[thm]{Lemma}
\newtheorem{rem}[thm]{Remark}
\newtheorem*{thmn}{Theorem}
\newcommand{\F}{\mathbb{F}}
\begin{document}

\title
[Ramification and moduli spaces of finite flat models]%
{Ramification and moduli spaces\\
of finite flat models}
\author{Naoki Imai}
\address{Research Institute for Mathematical Sciences, 
Kyoto University, Kyoto 606-8502 Japan}
\email{naoki@kurims.kyoto-u.ac.jp}

\subjclass[2000]{Primary: 11F80; Secondary: 14D20}

\maketitle

\begin{abstract}
We determine the type of the zeta functions 
and the range of the dimensions 
of the moduli spaces of finite flat models 
of two-dimensional local Galois representations over finite fields. 
This gives a generalization of 
Raynaud's theorem on the uniqueness of 
finite flat models in low ramifications. 
\end{abstract}

\section*{Introduction}
Let $K$ be a finite extension of the 
field $\mathbb{Q}_p$ of $p$-adic numbers. 
We assume $p>2$. 
Let $e$ be the ramification index of $K$ over $\mathbb{Q}_p$, 
and $k$ be the residue field of $K$. 
We consider a two-dimensional continuous 
representation $V_{\F}$ 
of the absolute Galois group $G_K$ 
over a finite field $\F$ of characteristic $p$. 
By a finite flat model of $V_{\F}$, 
we mean a finite flat group scheme $\mathcal{G}$ 
over $\mathcal{O}_K$, 
equipped with an action of $\F$, 
and an isomorphism 
$V_{\F} \stackrel{\sim}{\longrightarrow} \mathcal{G}(\overline{K})$ 
that respects the action of $G_K$ and $\F$. 
We assume that $V_{\F}$ has at least one finite flat model. 
If $e < p-1$, the finite flat model of $V_{\F}$ is 
unique by Raynaud's result \cite[Theorem 3.3.3]{Ray}.
In general, there are finitely many finite flat models 
of $V_{\F}$, and these appear as the $\F$-rational points of 
the moduli space of finite flat models of $V_{\F}$, 
which we denote by $\mathscr{GR}_{V_{\F},0}$.
It is natural to ask about the dimension
of $\mathscr{GR}_{V_{\F},0}$.
In this paper, 
we determine the type of the zeta functions 
and the range of the dimensions 
of the moduli spaces. 
The main theorem is the following. 

\begin{thmn}
Let 
$d_{V_{\F}} =\dim \mathscr{GR}_{V_{\F},0}$, 
and 
$Z(\mathscr{GR}_{V_{\F},0} ;T)$ 
be the zeta function of 
$\mathscr{GR}_{V_{\F},0}$. 
We put $n=[k:\F_p ]$. 
Then followings are true. 
\begin{enumerate}
\item 
After extending the field $\F$ sufficiently, 
we have 
\[
 Z(\mathscr{GR}_{V_{\F},0} ;T) = 
 \prod_{i=0} ^{d_{V_{\F}}} 
 (1-|\F|^i T)^{-m_i} 
\]
for some $m_i \in \mathbb{Z}$ such that $m_{d_{V_{\F}}} >0$. 
\item 
If $n=1$, 
we have 
\[
 0 \leq 
 d_{V_{\F}} \leq  
 \biggl[ \frac{e+2}{p+1} \biggr]. 
\]
If $n\geq 2$, 
we have 
\[
 0 \leq 
 d_{V_{\F}} \leq 
 \biggl[ \frac{n+1}{2} \biggr]
 \biggl[ \frac{e}{p+1} \biggr]+ 
 \biggl[ \frac{n-2}{2} \biggr] 
 \biggl[ \frac{e+1}{p+1} \biggr]+ 
 \biggl[ \frac{e+2}{p+1} \biggr]. 
\]
Here, $[x]$ is the greatest integer 
less than or equal to $x$ for 
$x \in \mathbb{R}$.

Furthermore, each equality in the above inequalities 
can happen for any finite extension $K$ of $\mathbb{Q}_p$. 
\end{enumerate}
\end{thmn}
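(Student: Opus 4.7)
The plan is to translate the problem into Kisin modules and to analyse the resulting moduli scheme by a stratification according to the elementary-divisor type of the Frobenius.

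By Kisin's equivalence, a finite flat model of $V_{\F}$ with an $\F$-action corresponds to a free module $\mathfrak{M}$ of rank $2$ over $(k\otimes_{\F_p}\F)[[u]]$ equipped with a Frobenius $\phi\colon \phi^{\ast}\mathfrak{M}\to\mathfrak{M}$ whose cokernel is killed by the Eisenstein polynomial $E(u)$, together with a compatibility datum recovering $V_{\F}$ from the étale $\phi$-module obtained by inverting $u$. The moduli space $\mathscr{GR}_{V_{\F},0}$ parameterises all such $(\mathfrak{M},\phi)$ compatible with a fixed étale $\phi$-module. After enlarging $\F$ so that it contains all embeddings of $k$, I would decompose $(k\otimes_{\F_p}\F)[[u]]$ into a product of $n$ copies of $\F[[u]]$ indexed by $\mathrm{Hom}(k,\F)$; the Frobenius then becomes an $n$-tuple of $2\times 2$ matrices over $\F[[u]]$, cyclically related.

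First I would stratify $\mathscr{GR}_{V_{\F},0}$ by the shape of $\phi$, namely by the tuple $(r_i)_{0\leq i<n}$ with $0\leq r_i\leq e$ recording the elementary divisors $(u^{r_i},u^{e-r_i})$ of the $i$-th component of $\phi$. Using row and column operations compatible with the étale descent datum, I would show that, after sufficient extension of $\F$, each stratum is isomorphic to an affine space over $\F$ whose dimension is an explicit combinatorial function of the tuple $(r_i)$. Summing the Poincaré polynomials of the strata immediately yields the product form of the zeta function in (1), and the stratum attaining the top dimension contributes a factor with $m_{d_{V_{\F}}}>0$.

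Next I would identify which shapes $(r_i)$ are actually realised by some $V_{\F}$ admitting a finite flat model: the constraint propagating across the cyclic index $i\in\mathbb{Z}/n\mathbb{Z}$ comes from the compatibility with the fixed generic fibre and takes the form of inequalities relating $r_i$ and $r_{i+1}$ modulo $p+1$. Optimising the dimension function under these inequalities produces the quantities $[e/(p+1)]$, $[(e+1)/(p+1)]$ and $[(e+2)/(p+1)]$, weighted by how many indices $i$ lie in each of three regimes (``interior'', ``boundary'', and the distinguished $n=1$ index). A careful casework on the parity of $n$ separates the coefficients $[(n+1)/2]$ and $[(n-2)/2]$ and gives the bounds claimed in (2).

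The main obstacle will be precisely this combinatorial optimisation: one must show that the upper bound is attained simultaneously at every index subject to the cyclic compatibility, and that no shape outside the admissible range gives a larger stratum. Once the maximum is pinned down, the sharpness statement follows by constructing, for each $K$, a reducible representation $V_{\F}$ whose inertial weights realise the extremal shape and verifying directly that the corresponding stratum is non-empty of the asserted dimension; this amounts to writing down an explicit extension of characters and checking finite flatness of the resulting Kisin module.
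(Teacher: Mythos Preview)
Your overall strategy---Kisin modules, decomposition along embeddings, stratification, then combinatorial optimisation---matches the paper's, but several steps as stated would not go through.

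First, the elementary-divisor stratification is mis-parameterised. The condition $u^{e}\mathfrak{M}\subset (1\otimes\phi)(\phi^{\ast}\mathfrak{M})\subset\mathfrak{M}$ only forces each elementary divisor to lie in $\{u^{0},\dots,u^{e}\}$; there is no relation forcing the pair at the $i$-th embedding to be $(u^{r_i},u^{e-r_i})$. In the paper the invariants are pairs $(a_i,b_i)$ with $0\le a_i,b_i\le e$ independently, and the dimension formulas and the key inequalities $(A_i)$, $(B_i)$ of Lemmas~\ref{boundT} and~\ref{AB} depend on both coordinates. With only a single $r_i$ per embedding you cannot reproduce the bound, and the quantities $[e/(p+1)]$, $[(e+1)/(p+1)]$, $[(e+2)/(p+1)]$ arise precisely from the interplay of $a_i$ and $b_i$ with both $p$ and $p+1$.

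Second, stratifying by elementary-divisor type alone does not produce affine strata. The paper instead fixes one lattice $\mathfrak{M}_{0,\F}$ adapted to the structure of $V_{\F}$ (upper-triangular in the reducible case, anti-diagonal at one embedding in the irreducible case) and uses the Iwasawa decomposition to write every other lattice as an upper-triangular change of basis from $\mathfrak{M}_{0,\F}$. This is what makes the strata linear. Crucially, the analysis must split on whether $V_{\F}$ is absolutely irreducible: in that case the strata are not all affine spaces but rather of the form $\mathbb{A}^{d}$, $\mathbb{A}^{d-1}\times\mathbb{G}_m$ or $\mathbb{A}^{d-2}\times\mathbb{G}_m^{2}$ (see the proof of Proposition~\ref{irreducible}), which is why the theorem only asserts $m_i\in\mathbb{Z}$ rather than $m_i\ge 0$. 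Your claim that ``each stratum is isomorphic to an affine space'' is false in this case and would give a stronger (and incorrect) zeta-function statement.

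Finally, for sharpness you also need a $V_{\F}$ with $d_{V_{\F}}=0$; the paper handles this separately (Proposition~\ref{onepoint}) with a specific non-split extension whose moduli space is a single point, which is not covered by your extremal-shape construction.
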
 

Raynaud's result says that if $e<p-1$ then 
$\mathscr{GR}_{V_{\F},0}$ is one point, 
that is, zero-dimensional and connected. 
If $e < p-1$, the above theorem also implies that 
$\mathscr{GR}_{V_{\F},0}$ is zero-dimensional. 
So it gives a dimensional generalization 
of Raynaud's result 
for two-dimensional Galois representations. 
The connectedness of 
$\mathscr{GR}_{V_{\F},0}$ is 
completely false in general. 
For example, we can check that 
if $K=\mathbb{Q}_p (\zeta_p )$ and 
$V_{\F}$ is trivial representations 
then $\mathscr{GR}_{V_{\F},0}$ consists of 
$\mathbb{P}_{\F} ^1$ and two points 
(c.f. \cite[Proposition 2.5.15(2)]{Kis}). 
Here $\mathbb{P}_{\F} ^1$ denotes 
the $1$-dimensional projective space 
over $\F$. 

In the section $1$, we recall 
the moduli space of finite flat models, 
and give some Lemmas. 
We also give an example for any $K$ where 
the moduli space of finite flat models 
is one point. 

A proof of the main theorem separates into two cases, 
that is, the case where $V_{\F}$ is not absolutely irreducible 
and the case where $V_{\F}$ is absolutely irreducible. 
In section $2$, we treat 
the case where $V_{\F}$ is not absolutely irreducible. 
In this case, we decompose $\mathscr{GR}_{V_{\F},0}$ 
into affine spaces in the level of rational points. 
Then we express the dimensions of these affine spaces explicitly 
and bound it by combinatorial arguments. 
In section $3$, we treat 
the case where $V_{\F}$ is absolutely irreducible. 
A proof is similar to the case 
where $V_{\F}$ is not absolutely irreducible, 
but, in this case, 
we have to decompose $\mathscr{GR}_{V_{\F},0}$ into 
$\mathbb{A}_{\F} ^d$ and 
$\mathbb{A}_{\F} ^{d-1} \times \mathbb{G}_m$ and 
$\mathbb{A}_{\F} ^{d-2} \times \mathbb{G}_m ^2$ 
in the level of rational points. 
Here $\mathbb{A}_{\F} ^d$ denotes the 
$d$-dimensional affine space over $\F$, 
and $\mathbb{G}_m$ is $\mathbb{A}_{\F} ^1 -\{0\}$. 

In the section $4$, we state the main theorem 
and prove it by collecting the results of 
former sections. 

\subsection*{Acknowledgment}
The author is grateful to his advisor Takeshi Saito 
for his careful reading of an earlier version of this paper 
and for his helpful comments. 
He would like to thank the referees 
for their careful reading of this paper 
and a number of suggestions for improvements. 
\subsection*{Notation}
Throughout this paper, we use the following notation.
Let $p>2$ be a prime number, 
and $k$ be the finite field 
of cardinality $q=p^n$. 
The Witt ring of $k$ is denoted by $W(k)$. 
Let $K_0$ be the quotient field of $W(k)$, 
and $K$ be 
a totally ramified extension of $K_0$ of degree $e$. 
The ring of integers of $K$ is denoted by $\mathcal{O}_K$, 
and the absolute Galois group of $K$ is 
denoted by $G_K$. 
Let $\F$ be a finite field of characteristic $p$.
For a ring $A$, 
the formal power series ring of $u$ over $A$ is
denoted by $A [[u]]$, 
and we put $A ((u))=A [[u]](1/u)$. 
For a field $F$, 
the algebraic closure of $F$ 
is denoted by $\overline{F}$ and 
the separable closure of $F$ 
is denoted by $F^{\mathrm{sep}}$. 
Let $v_u$ be the valuation of 
$\F ((u))$ normalized by $v_u (u)=1$, 
and we put $v_u (0)=\infty$.
For $x \in \mathbb{R}$, 
the greatest integer less than or equal to 
$x$ is denoted by $[x]$. 
For a positive integer $d$, 
the $d$-dimensional affine space over $\F$ is 
denoted by $\mathbb{A}_{\F} ^d$. 
Let $\mathbb{G}_m$ be $\mathbb{A}_{\F} ^1 -\{0\}$. 

\section{Preliminaries}
First of all, we recall the moduli spaces of 
finite flat models constructed by Kisin in \cite{Kis}. 

Let $V_{\F}$ be a continuous 
two-dimensional representation of $G_K$ over $\F$. 
We assume that $V_{\F}$
comes from the generic fiber of 
a finite flat group scheme over $\mathcal{O}_K$.
The moduli space of finite flat models of $V_{\F}$, 
which is denoted by $\mathscr{GR}_{V_{\F},0}$, 
is a projective scheme over $\F$. 
An important property of $\mathscr{GR}_{V_{\F},0}$ 
is the following Proposition.

\begin{prop}\label{property}
For any finite extension $\F'$ of $\F$, 
there is a natural bijection between 
the set of isomorphism classes of finite flat models 
of $V_{\F'} =V_{\F} \otimes _{\F} \F'$ 
and $\mathscr{GR}_{V_{\F},0} (\F')$.
\end{prop}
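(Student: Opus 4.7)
The plan is to deduce the bijection from Kisin's construction of $\mathscr{GR}_{V_{\F},0}$ in \cite{Kis}. First, I would recall the underlying framework: over $\mathfrak{S} = W(k)[[u]]$ equipped with the Frobenius $\varphi \colon u \mapsto u^p$, Kisin establishes an anti-equivalence between finite flat group schemes over $\mathcal{O}_K$ killed by a power of $p$ and finite $\mathfrak{S}$-modules of $E(u)$-height at most $1$, where $E(u)$ is the Eisenstein polynomial of a chosen uniformizer of $K$. Passing to the generic fiber corresponds to tensoring up to $\mathcal{O}_{\mathcal{E}}$, so that mod-$p$ two-dimensional representations of $G_K$ are equivalent to étale $(\varphi,\F((u)))$-modules (compatibly with any $\F$-action).

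Next, I would recall that $\mathscr{GR}_{V_{\F},0}$ is by construction the projective $\F$-scheme whose $R$-points, for an $\F$-algebra $R$, parametrize $\varphi$-stable $R[[u]]$-lattices in $M(V_{\F}) \otimes_{\F((u))} R((u))$ whose relative Frobenius has cokernel killed by $E(u)$, where $M(V_{\F})$ denotes the étale $\varphi$-module attached to $V_{\F}$. Its representability as a closed subscheme of a suitable affine Grassmannian is part of Kisin's construction; hence $\mathscr{GR}_{V_{\F},0}(\F')$ is exactly the set of such $\F'[[u]]$-lattices inside $M(V_{\F}) \otimes_{\F((u))} \F'((u))$.

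For the bijection, the essential input is base-change compatibility of the étale $\varphi$-module functor: $M(V_{\F'})$ is canonically identified with $M(V_{\F}) \otimes_{\F((u))} \F'((u))$. Thus an $\F'$-point of $\mathscr{GR}_{V_{\F},0}$ is exactly an $\F'[[u]]$-lattice of height at most $1$ inside $M(V_{\F'})$, which, via Kisin's anti-equivalence applied over $\F'$, corresponds to an isomorphism class of finite flat group scheme over $\mathcal{O}_K$ with $\F'$-action whose generic fiber recovers $V_{\F'}$ as a $G_K \times \F'$-module; this is precisely a finite flat model of $V_{\F'}$.

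The main subtlety, aside from matching conventions with \cite{Kis}, is verifying that the anti-equivalence is compatible with the $\F'$-linear structure, so that isomorphism classes of $\F'$-models match $\F'$-points rather than just isomorphism classes of underlying $\mathfrak{S}$-lattices. This reduces to the functoriality of Kisin's equivalence together with the observation that an $\F'$-action on the group scheme corresponds to an $\F'[[u]]$-module structure on the associated Breuil--Kisin module. Once these compatibilities are in hand, the bijection is natural in $\F'$ and the proposition follows.
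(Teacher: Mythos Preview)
Your proposal is correct and aligns with the paper's approach: the paper's entire proof is the one-line citation ``This is \cite[Corollary 2.1.13]{Kis},'' and your sketch is precisely an unpacking of that corollary. One minor point of convention worth aligning: the base ring for the \'etale $\varphi$-module is $k((u)) \otimes_{\F_p} \F$ rather than $\F((u))$, and in the paper's normalization $M_{\F}$ corresponds to $V_{\F}(-1)|_{G_{K_\infty}}$, but this does not affect the substance of your argument.
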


\begin{proof}
This is \cite[Corollary 2.1.13]{Kis}.
\end{proof}

Let $\mathfrak{S} = W(k)[[u]] $, and $\mathcal{O}_{\mathcal{E}}$ be 
the $p$-adic completion of $\mathfrak{S} [1/u]$. 
There is an $p$-adically continuous 
action of $\phi$ on 
$\mathcal{O}_{\mathcal{E}}$ 
determined by Frobenius on $W(k)$ 
and $u \mapsto u^p$. 
We fix a uniformizer $\pi$ of $\mathcal{O}_K$, 
and choose elements $\pi _m \in \overline{K}$ such that
$\pi _0 = \pi$ and $\pi ^p _{m+1} = \pi _m$ 
for $m\geq 0$, and 
put $K_{\infty} = \bigcup _{m\geq 0} K(\pi _m) $. 

Let $\Phi {\mathrm{M}}_{\mathcal{O}_{\mathcal{E}},\F}$ 
be the category of finite 
$(\mathcal{O}_{\mathcal{E}} \otimes_{\mathbb{Z}_p} \F)$-modules 
$M$ equipped with $\phi$-semi-linear map 
$M \to M$ such that the induced 
$(\mathcal{O}_{\mathcal{E}} \otimes_{\mathbb{Z}_p} \F)$-linear 
map $\phi ^* (M) \to M$ is an isomorphism. 
Let $\Rep_{\F} (G_{K_{\infty}} )$ be the 
category of continuous representations of $G_{K_{\infty}}$ over $\F$. 
Then the functor 
\[
 T: \Phi {\mathrm{M}}_{\mathcal{O}_{\mathcal{E}},\F} 
 \to \Rep_{\F} (G_{K_{\infty}});\ 
 M \mapsto \bigl( k((u))^{\mathrm{sep}} \otimes_{k((u))} M \bigr)^{\phi =1} 
\]
gives an equivalence of abelian categories 
as in \cite[(1.1.12)]{Kis}. 
Here $\phi$ acts on $k((u))^{\mathrm{sep}}$ 
by the $p$-th power map. 
We take the $\phi$-module 
$M_{\F} \in \Phi {\mathrm{M}}_{\mathcal{O}_{\mathcal{E}},\F}$ 
such that $T(M_{\F})$ is isomorphic to 
$V_{\F} (-1)|_{G_{K_{\infty}}}$. 
Here $(-1)$ denotes the inverse of the Tate twist. 

The moduli space $\mathscr{GR}_{V_{\F},0}$ is described via 
the Kisin modules as in the following.

\begin{prop}\label{description}
For any $\F$-algebra $A$, 
the elements of $\mathscr{GR}_{V_{\F},0}(A)$ 
naturally correspond to finite projective 
$(k[[u]] \otimes _{\F_p} A)$-submodules 
$\mathfrak{M}_A \subset M_\F \otimes _{\F} A$ 
that satisfy the followings: 
\begin{enumerate}
\item 
$\mathfrak{M}_A$ generates 
$M_\F \otimes _{\F} A$ over $k((u)) \otimes _{\F_p} A$. 
\item 
$u^e \mathfrak{M}_A \subset 
(1\otimes \phi ) \bigl( \phi ^* (\mathfrak{M}_A) \bigr)
\subset \mathfrak{M}_A$.
\end{enumerate}
\end{prop}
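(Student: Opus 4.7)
The plan is to unwind Kisin's construction of $\mathscr{GR}_{V_{\F},0}$ as a moduli of Breuil--Kisin lattices. Recall from \cite{Kis} that finite flat group schemes $\mathcal{G}$ over $\mathcal{O}_K$ with an $\F$-action correspond, via the generic fiber together with the equivalence $T$ (after the indicated Tate twist correction), to finite projective $\mathfrak{S}$-submodules of an ambient \'etale $\phi$-module, subject to a height-$\leq e$ condition. Kisin's construction is functorial in the coefficient ring, which is what allows one to globalize to arbitrary $\F$-algebras $A$ and obtain $\mathscr{GR}_{V_{\F},0}$ as a representable moduli functor rather than a mere set of isomorphism classes.

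First I would observe that, since we work in characteristic $p$, the ring $\mathfrak{S} = W(k)[[u]]$ reduces modulo $p$ to $k[[u]]$ and $\mathcal{O}_{\mathcal{E}}$ reduces to $k((u))$. Hence for an $\F$-algebra $A$ the natural integral structure on $M_{\F} \otimes_{\F} A$ is that of a $(k((u)) \otimes_{\F_p} A)$-module, and full-rank $\mathfrak{S}$-lattices translate into finite projective $(k[[u]] \otimes_{\F_p} A)$-submodules that generate $M_{\F} \otimes_{\F} A$ over $k((u)) \otimes_{\F_p} A$. This is precisely condition (1).

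Next I would translate the height-$\leq e$ condition into condition (2). The right inclusion $(1 \otimes \phi)(\phi^*(\mathfrak{M}_A)) \subset \mathfrak{M}_A$ encodes $\phi$-stability of the lattice, and the left inclusion $u^e \mathfrak{M}_A \subset (1 \otimes \phi)(\phi^*(\mathfrak{M}_A))$ encodes the height bound, with the exponent $e$ reflecting the ramification index of $K/K_0$. Together these are the standard defining conditions of a Breuil--Kisin lattice coming from a finite flat model of height $\leq e$.

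The main obstacle is the passage from $\F'$-valued points (Proposition \ref{property}) to $A$-valued points, namely the representability of the relative functor by a projective scheme; but this is exactly what Kisin establishes in \cite{Kis}. Granting that, the present Proposition is a rewriting of Kisin's description under the reduction-mod-$p$ isomorphism $\mathfrak{S}/p \simeq k[[u]]$, together with a compatibility check with the chosen trivialization identifying $M_{\F}$ with the Kisin module attached to $V_{\F}(-1)|_{G_{K_{\infty}}}$. I therefore expect the proof to consist essentially of citing Kisin's moduli description and verifying this compatibility.
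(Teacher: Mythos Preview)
Your proposal is correct and matches the paper's approach exactly: the paper's proof is the single sentence ``This follows from the construction of $\mathscr{GR}_{V_{\F},0}$ in \cite[Corollary 2.1.13]{Kis},'' and you have correctly anticipated that the content is entirely a citation to Kisin's moduli description, with your additional unwinding of the reduction modulo $p$ and the height-$\leq e$ condition being accurate elaboration of what that citation encodes.
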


\begin{proof}
This follows from the construction 
of $\mathscr{GR}_{V_{\F},0}$ in 
\cite[Corollary 2.1.13]{Kis}. 
\end{proof}

By Proposition \ref{description}, 
we often identify a point of 
$\mathscr{GR}_{V_{\F},0} (\F')$ with 
the corresponding finite free 
$k[[u]] \otimes _{\F_p} \F'$-module. 

 From now on, we assume $\F_{q^2} \subset \F$ and 
fix an embedding $k \hookrightarrow \F$. 
This assumption does not matter, 
because we may extend $\F$ to prove the main theorem. 
We consider the isomorphism
\[
 \mathcal{O}_{\mathcal{E}} \otimes _{\mathbb{Z}_p} \F \cong 
 k((u))\otimes _{\F _p} \F \stackrel{\sim}{\to}
 \prod _{\sigma \in \Gal (k/{\F_p})} \F ((u))\ ;\ 
 \Bigl( \sum a_i u^i \Bigr) \otimes b \mapsto
 \Bigl( \sum \sigma (a_i ) b u^i \Bigr) _{\sigma}
\]
and let $\epsilon_{\sigma} \in k((u)) \otimes _{\F_p} \F $ be 
the primitive idempotent corresponding to $\sigma$.
Take $\sigma _1 , \cdots , \sigma _n \in \Gal (k/\F _p)$ such 
that $\sigma _{i+1} = \sigma _i \circ \phi ^{-1}$. 
Here we regard $\phi$ as the $p$-th power Frobenius, 
and use the convention that 
$\sigma _{n+i} =\sigma _i$.
In the following, we often use such conventions.
Then we have 
$\phi (\epsilon _{\sigma _i}) = \epsilon _{\sigma _{i+1}}$ and 
$\phi : M_{\F} \to M_{\F}$ determines 
$\phi :\epsilon _{\sigma _i} M_{\F} \to 
 \epsilon _{\sigma _{i+1}} M_{\F}$.
For $(A_i )_{1 \leq i \leq n} \in GL_2 \bigl( \F ((u)) \bigr)^n$, 
we write
\[
 M_{\F} \sim (A_1 ,A_2 , \ldots ,A_n )=(A_i)_i
\]
if there is a basis $\{ e^i _1 ,e^i _2 \}$ 
of $\epsilon _{\sigma _i} M_{\F}$ over $\F ((u))$ 
such that 
$\phi 
\begin{pmatrix} e^i _1 \\ e^i _2 \end{pmatrix}
 = A_i 
\begin{pmatrix} e^{i+1} _1 \\ e^{i+1} _2 \end{pmatrix}$. 
We use the same notation for any sublattice 
$\mathfrak{M} _{\F} \subset M_{\F}$ similarly. 
Here and in the following, we consider only sublattices 
that are $(\mathfrak{S}\otimes _{\mathbb{Z}_p} \F)$-modules.

Let $A$ be an $\F$-algebra, and 
$\mathfrak{M}_A$ be a finite free 
$(k[[u]] \otimes _{\F_p} A)$-submodules 
of $M_\F \otimes _{\F} A$ 
that generate 
$M_\F \otimes _{\F} A$ over $k((u)) \otimes _{\F_p} A$. 
We choose a basis $\{ e^i _1 ,e^i _2 \}_i$ of 
$\mathfrak{M}_A$ over $k[[u]] \otimes _{\F_p} A$. 
For 
$B=(B_i )_{1 \leq i \leq n} \in 
 GL _2 \bigl(\F((u)) \otimes_{\F_p} A \bigr)^n$, 
the $(\mathfrak{S}\otimes _{\mathbb{Z}_p} A)$-module 
generated by the entries of 
$\biggl{\langle} B_i 
\begin{pmatrix} e^i _1 \\ e^i _2 \end{pmatrix} 
\biggr{\rangle}$ for 
$1 \leq i \leq n$
with the basis given by these entries 
is denoted by 
$B\cdot \mathfrak{M} _{A}$. 
Note that $B\cdot \mathfrak{M} _A$ depends on 
the choice of the basis of $\mathfrak{M} _A$. 
We can see that 
if $\mathfrak{M}_{\F} \sim (A_i)_i$ for 
$(A_i )_{1 \leq i \leq n} \in GL_2 \bigl( \F((u)) \bigr)^n$ 
with respect to a given basis, 
then we have 
\[
 B\cdot \mathfrak{M} _{\F} \sim 
 \bigl( \phi (B_i )A_i (B_{i+1} )^{-1} 
 \bigr)_i 
\]
with respect to the induced basis. 

\begin{lem}\label{equiv}
Suppose $\F'$ is a finite extension of $\F$, 
and $x \in \mathscr{GR}_{V_{\F},0}(\F')$ 
corresponds to 
$\mathfrak{M} _{\F'}$. 
Put 
$\mathfrak{M} _{j,\F'} = 
 \Biggl(
 \begin{pmatrix}
 u^{s_{j,i}} & v_{j,i} \\ 0 & u^{t_{j,i}}
 \end{pmatrix} 
 \Biggr)_i \cdot
 \mathfrak{M} _{\F'}$
for 
$1\leq j \leq 2$, $s_{j,i} ,t_{j,i} \in \mathbb{Z}$ and 
$v_{j,i} \in \F'((u))$.
Assume $\mathfrak{M} _{1,\F'}$ and $\mathfrak{M} _{2,\F'}$ 
correspond to $x_1 , x_2 \in \mathscr{GR}_{V_{\F},0}(\F')$ 
respectively. 
Then $x_1 =x_2$ if and only if
\[
 s_{1,i} =s_{2,i} ,\ t_{1,i} =t_{2,i} 
 \textrm{ and } 
 v_{1,i} -v_{2,i} \in u^{t_{1,i}} \F'[[u]] 
 \textrm{ for all } i.
\]
\end{lem}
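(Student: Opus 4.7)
The plan is to translate the claim about points of $\mathscr{GR}_{V_{\F},0}$ into the equality of the corresponding sublattices using Proposition \ref{description}, and then reduce componentwise using the idempotents $\epsilon_{\sigma_i}$. Concretely, $x_1 = x_2$ if and only if $\mathfrak{M}_{1,\F'} = \mathfrak{M}_{2,\F'}$ as $(\mathfrak{S}\otimes_{\mathbb{Z}_p} \F')$-submodules of $M_{\F} \otimes_{\F} \F'$, and since the $\epsilon_{\sigma_i}$ are orthogonal idempotents summing to $1$, this is equivalent to $\epsilon_{\sigma_i} \mathfrak{M}_{1,\F'} = \epsilon_{\sigma_i} \mathfrak{M}_{2,\F'}$ as $\F'[[u]]$-submodules of $\epsilon_{\sigma_i}(M_{\F}\otimes_{\F} \F') = \F'((u)) e^i_1 \oplus \F'((u)) e^i_2$ for every $i$. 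From the definition of $B\cdot \mathfrak{M}$, the $i$-th component $\epsilon_{\sigma_i} \mathfrak{M}_{j,\F'}$ is the $\F'[[u]]$-span of the two explicit vectors $u^{s_{j,i}} e^i_1 + v_{j,i} e^i_2$ and $u^{t_{j,i}} e^i_2$.

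For the direction ($\Leftarrow$), assuming $s_{1,i} = s_{2,i}$, $t_{1,i} = t_{2,i}$ and $v_{1,i}-v_{2,i} = u^{t_{1,i}} w_i$ with $w_i \in \F'[[u]]$, I would exhibit each generator of $\epsilon_{\sigma_i}\mathfrak{M}_{1,\F'}$ inside $\epsilon_{\sigma_i}\mathfrak{M}_{2,\F'}$ by the identity
\[
u^{s_{1,i}} e^i_1 + v_{1,i} e^i_2 = \bigl( u^{s_{2,i}} e^i_1 + v_{2,i} e^i_2 \bigr) + w_i \cdot u^{t_{2,i}} e^i_2,
\]
together with $u^{t_{1,i}} e^i_2 = u^{t_{2,i}} e^i_2$, and conclude by symmetry. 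For the direction ($\Rightarrow$), I would extract the three conditions one at a time by intrinsic operations on the submodule. Projection along $e^i_2$ onto $\F'((u)) e^i_1$ sends $\epsilon_{\sigma_i}\mathfrak{M}_{j,\F'}$ onto $u^{s_{j,i}} \F'[[u]] \cdot e^i_1$, forcing $s_{1,i} = s_{2,i}$. The intersection $\epsilon_{\sigma_i}\mathfrak{M}_{j,\F'} \cap \F'((u)) e^i_2$ equals $u^{t_{j,i}} \F'[[u]] \cdot e^i_2$, forcing $t_{1,i} = t_{2,i}$. Finally, the membership $u^{s_{1,i}} e^i_1 + v_{1,i} e^i_2 \in \epsilon_{\sigma_i}\mathfrak{M}_{2,\F'}$ combined with comparison of $e^i_1$-coefficients pins down the coefficient of $u^{s_{2,i}} e^i_1 + v_{2,i} e^i_2$ to be $1$, giving $v_{1,i}-v_{2,i} \in u^{t_{1,i}} \F'[[u]]$.

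The lemma is essentially formal once the idempotent decomposition is in place, so there is no substantive obstacle; the only thing to watch is that $B\cdot \mathfrak{M}$ carries a choice of basis while the point $x \in \mathscr{GR}_{V_{\F},0}(\F')$ only remembers the submodule, so the argument has to be read purely at the level of $\F'[[u]]$-submodules of $\F'((u)) e^i_1 \oplus \F'((u)) e^i_2$.
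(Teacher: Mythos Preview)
Your proof is correct and follows the same componentwise reduction as the paper. The only difference is in how the equality of the two rank-$2$ lattices in $\F'((u)) e^i_1 \oplus \F'((u)) e^i_2$ is verified: the paper computes the change-of-basis matrix
\[
\begin{pmatrix} u^{s_{2,i}} & v_{2,i} \\ 0 & u^{t_{2,i}} \end{pmatrix}
\begin{pmatrix} u^{s_{1,i}} & v_{1,i} \\ 0 & u^{t_{1,i}} \end{pmatrix}^{-1}
\]
and reads off when it lies in $GL_2(\F'[[u]])$, whereas you extract $s_{j,i}$, $t_{j,i}$, and the condition on $v_{j,i}$ via the projection onto $\F'((u))e^i_1$ and the intersection with $\F'((u))e^i_2$; both arguments are equally short and yield the same conditions.
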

\begin{proof}
The equality $x_1 =x_2$ is equivalent to 
the existence of 
$B=(B_i )_{1 \leq i \leq n} \in GL_2 (\F'[[u]])^n$ 
such that 
\[
B_i 
 \begin{pmatrix}
  u^{s_{1,i}} & v_{1,i} \\ 0 & u^{t_{1,i}} 
 \end{pmatrix} =
 \begin{pmatrix}
  u^{s_{2,i}} & v_{2,i} \\ 0 & u^{t_{2,i}} 
 \end{pmatrix} 
\]
for all $i$. 
It is further equivalent to the condition that 
\[
 \begin{pmatrix}
 u^{s_{2,i} -s_{1,i}} 
 & v_{2,i} u^{-t_{1,i}} - u^{s_{2,i} -s_{1,i} -t_{1,i}} v_{1,i} \\
 0 & u^{t_{2,i} -t_{1,i}}
 \end{pmatrix} 
 \in GL_2 (\F'[[u]]) 
\] 
for all $i$. 
The last condition is equivalent to the desired condition. 
\end{proof}

\begin{lem}\label{stracture}
Suppose $V_{\F}$ is absolutely irreducible. 
If $\F'$ is the quadratic extension of $\F$, 
then 
\[
 M_{\F} \otimes _{\F} \F ' \sim 
\Biggl(
\begin{pmatrix}
 0 & \alpha _1 \\ \alpha _1 u^m & 0
\end{pmatrix}
,
\begin{pmatrix}
 \alpha _2 & 0 \\ 0 & \alpha _2
\end{pmatrix}
,
\ldots
,
\begin{pmatrix}
 \alpha _n & 0 \\ 0 & \alpha _n
\end{pmatrix}
\Biggr)
\]
for some $\alpha _i \in (\F')^{\times}$ and a positive integer $m$ 
such that $(q+1) \nmid m$. 
Conversely, for each positive integer $m$ 
such that $(q+1) \nmid m$, 
there exists an absolutely irreducible representation 
$V_{\F}$ as above. 
\end{lem}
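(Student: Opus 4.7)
The plan is to prove this in three stages, followed by an explicit construction for the converse.

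First, I would use the gauge freedom to reduce $(A_1, \ldots, A_n)$ to a form where $A_i$ is scalar for $i \geq 2$. A base change by $(C_1, \ldots, C_n) \in GL_2(\F'((u)))^n$ sends the tuple to $(\phi(C_i)^{-1} A_i C_{i+1})_i$. I would choose $C_2$ arbitrarily and then pick $C_3, \ldots, C_n, C_1$ successively via the recursion $C_{i+1} = \alpha_i A_i^{-1} \phi(C_i)$ so that $\phi(C_i)^{-1} A_i C_{i+1} = \alpha_i I$ for $i \geq 2$; the scalars $\alpha_i \in (\F')^{\times}$ cannot in general be normalized to $1$. The residual gauge then acts on $A_1$ by $A_1 \mapsto \phi^n(C)^{-1} A_1 C$ for $C = C_2$, up to a fixed scalar.

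Next, I would consider the pair $(N, \Phi) := (\epsilon_{\sigma_1}(M_{\F} \otimes_{\F} \F'), \phi^n)$ as a $\phi^n$-module of rank $2$ over $\F'((u))$. Absolute irreducibility of $V_{\F}$, combined with the structure of $V_{\F}|_{G_{K_{\infty}}}$ as induced from a character of the unramified quadratic extension $K'$ of $K$, forces $(N, \Phi)$ to be irreducible as a $\phi^n$-module. For any nonzero $e \in N$, irreducibility ensures $\{e, \Phi(e)\}$ is a basis, so the matrix of $\Phi$ has the form $\begin{pmatrix} 0 & 1 \\ c & b \end{pmatrix}$ with $c \neq 0$. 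I would then replace $e$ with a $\Phi^2$-semilinear eigenvector, i.e., a vector $e'$ with $\Phi^2(e') \in \F'((u)) \cdot e'$, whose existence comes from solving a Hilbert~90-style semilinear equation over the quadratic extension $\F'$; this kills $b$ in the new basis. A final diagonal rescaling normalizes the off-diagonal entries to the stated form $\alpha_1 \begin{pmatrix} 0 & 1 \\ u^m & 0 \end{pmatrix}$.

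For the divisibility condition, the $\phi^n$-module given by $\alpha_1 \begin{pmatrix} 0 & 1 \\ u^m & 0 \end{pmatrix}$ is reducible if and only if there exists $a \in \F'((u))^{\times}$ with $a \cdot \phi^n(a) \in (\F')^{\times} \cdot u^m$; writing $a = u^r \cdot (\mathrm{unit})$ forces $r(q+1) = m$. Hence irreducibility is equivalent to $(q+1) \nmid m$. For the converse, given a positive $m$ with $(q+1) \nmid m$, I would construct $V_{\F} = \mathrm{Ind}_{G_{K'}}^{G_K} \chi$, with $K'$ the unramified quadratic extension of $K$ and $\chi$ a character of $G_{K'}$ whose associated rank-$1$ Kisin module (over $k'[[u]] \otimes_{\F_p} \F'$, where $k'$ is the residue field of $K'$) realizes the required invariant $m$; absolute irreducibility of the induced representation then follows from $(q+1) \nmid m$ preventing $\chi$ from coinciding with its conjugate under the nontrivial element of $\Gal(K'/K)$.

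The main obstacle is producing the $\Phi^2$-eigenvector in the second stage, since the relevant identity is a nonlinear semilinear equation; this is also the point where the quadratic extension $\F'$ is essential, supplying the ``square root'' that makes the equation solvable.
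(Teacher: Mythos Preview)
The paper does not argue this lemma; it cites \cite[Lemma~1.2]{Ima} for both directions. Your outline is a reasonable direct proof and likely parallels what that reference does: reduce to a single $\phi^n$-module $(N,\Phi)$ over $\F'((u))$, put $\Phi$ in anti-diagonal form via a $\Phi^2$-stable line, and read off $m$ and the $(q+1)\nmid m$ condition. One correction: you place the need for the quadratic extension $\F'$ at the $\Phi^2$-eigenvector step, but under the paper's standing hypothesis $\F_{q^2}\subset\F$ that eigenline already exists over $\F$. Indeed, the corresponding Galois representation of $G_{k((u))}$ is tame with inertial eigenvalues in $\F_{q^2}\subset\F$ swapped by Frobenius, so the square of Frobenius preserves each eigenline over $\F$; no ``Hilbert~90'' argument is required there. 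The quadratic extension enters only in the final rescaling: once $\Phi(e_1')=e_2'$ and $\Phi(e_2')=\lambda e_1'$ with $\lambda\in\F((u))^\times$ normalized to $\gamma u^m$, the desired form $\alpha_1\bigl(\begin{smallmatrix}0&1\\u^m&0\end{smallmatrix}\bigr)$ forces $\alpha_1^2=\gamma$, and $\sqrt{\gamma}$ lies in $\F'$ but need not lie in $\F$. Your ``square root'' intuition is therefore correct, but it belongs to the last step rather than to the eigenvector construction.
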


\begin{proof}
The first statement is \cite[Lemma 1.2]{Ima}, 
and the second statement follows from the proof 
of \cite[Lemma 1.2]{Ima}. 
We have used the assumption 
$\F_{q^2} \subset \F$ in this Lemma. 
\end{proof}

\begin{prop}\label{onepoint}
If 
$M_{\F} \sim 
\Biggl(
\begin{pmatrix}
 u^e & u \\ 0 & 1 
\end{pmatrix}
\Biggr)_i$, 
then $\mathscr{GR}_{V_{\F},0} (\F')$ 
is one point for any finite extension 
$\F'$ of $\F$. 
\end{prop}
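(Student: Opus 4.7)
The plan is to combine Proposition~\ref{description} with the upper triangular normal form underlying Lemma~\ref{equiv}. Any point of $\mathscr{GR}_{V_{\F},0}(\F')$ can be written as $\mathfrak{M}_{\F'} = B\cdot(M_\F\otimes_\F \F')$ for some
\[
 B_i = \begin{pmatrix} u^{s_i} & v_i \\ 0 & u^{t_i} \end{pmatrix}, \quad s_i, t_i \in \mathbb{Z}_{\geq 0},\ v_i \in \F'[[u]] \text{ of degree } < t_i.
\]
A direct computation shows that $A'_i := \phi(B_i)\,A_i\,B_{i+1}^{-1}$ is again upper triangular with diagonal $(u^{s'_i}, u^{t'_i})$, where $s'_i = p s_i + e - s_{i+1}$ and $t'_i = p t_i - t_{i+1}$. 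The content of condition~(2) of Proposition~\ref{description} is then $0 \leq s'_i, t'_i \leq e$ together with integrality conditions on the off-diagonal entry $v'_i$.

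Cyclic summation of $s'_i \leq e$ yields $(p-1)\sum_i s_i \leq 0$, hence $s_i = 0$ and $s'_i = e$ for all $i$. With this the off-diagonal entry of $A'_i$ simplifies to $v'_i = u^{-t_{i+1}}(u + \phi(v_i) - u^e v_{i+1})$, and the remaining integrality $v'_i \in u^{t'_i}\F'[[u]]$ sharpens to
\[
 u + \phi(v_i) - u^e v_{i+1} \in u^{p t_i}\F'[[u]].
\]
Next, $t'_i \geq 0$ forces $t_{i+1} \leq p t_i$, so by cyclic iteration either all $t_i$ vanish or none do. Assume the latter, so every $t_i \geq 1$. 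The key step is reading off the coefficient of $u^1$ on the left-hand side of the above containment: since $\phi(v_i)=\sum a_{i,k}u^{pk}$ contributes only at exponents divisible by $p \geq 3$, and $u^e v_{i+1}$ only at exponents $\geq e$, when $e \geq 2$ this coefficient is exactly $1$, while the containment (with $p t_i \geq p \geq 3$) demands it vanish. The residual case $e=1$ is excluded by summation alone: $(p-1)\sum t_i \leq ne$ combined with $\sum t_i \geq n$ would force $e \geq p-1 \geq 2$. Hence $t_i = 0$ and $v_i = 0$ for every $i$, so $B_i = \id$ and $\mathfrak{M}_{\F'} = M_\F \otimes_\F \F'$ is the unique point.

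The main obstacle is conceptual rather than computational: once the easy summation trick has killed $s_i$, one must recognize that the fixed off-diagonal entry $u$ in $A_i$ produces an incompressible $u^1$-contribution to $u + \phi(v_i) - u^e v_{i+1}$ which cannot be absorbed into $u^{p t_i}\F'[[u]]$, ruling out all nontrivial deformations. The rest is routine bookkeeping with upper triangular matrices over $\F'[[u]]$.
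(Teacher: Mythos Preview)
Your normal form is where the argument breaks. The Iwasawa decomposition only says that any lattice in $M_{\F'}$ can be written as $B\cdot\mathfrak{M}_{0,\F'}$ with
\[
 B_i=\begin{pmatrix}u^{s_i}&v_i\\0&u^{t_i}\end{pmatrix},\qquad s_i,t_i\in\mathbb{Z},\ v_i\in\F'((u));
\]
the restrictions $s_i,t_i\ge 0$ and $v_i\in\F'[[u]]$ are \emph{not} part of the decomposition. Lemma~\ref{equiv} lets you normalize $v_i$ modulo $u^{t_i}\F'[[u]]$, which kills terms of degree $\ge t_i$ but says nothing about possible terms of negative degree in $v_i$.

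This matters at both of your steps. Cyclic summation of $s'_i\le e$ indeed gives $(p-1)\sum_i s_i\le 0$, but without $s_i\ge 0$ you cannot conclude $s_i=0$; the diagonal inequalities alone allow any $s_i$ with $-e/(p-1)\le s_i\le 0$. And even after $s_i=0$ is in hand, your $u^1$-coefficient argument needs $u^e v_{i+1}$ to contribute only in degrees $\ge e$, i.e.\ $v_{i+1}\in\F'[[u]]$; if $v_{i+1}$ carries a $u^{1-e}$ term it cancels the $u$ and the contradiction evaporates.

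Showing that the $v_i$ have no poles (and along the way that $s_i=0$) is exactly the substantive part of the paper's proof. The paper sets $r_i=-v_u(v_i)$ and argues by contradiction: assuming the degree-one term in $u^{1-ps_i}+\phi(v_i)u^{-t_{i+1}}-v_{i+1}u^{e-ps_i+s_{i+1}-t_{i+1}}$ is below the required cutoff, one looks at the index where $r_i-s_i$ is maximal and shows the recursion forces it to be strictly larger at the next index. Once that maximality argument has yielded $s_i=0$ and $r_i\le 0$, your $u^1$-coefficient observation is a clean alternative to the paper's remaining case split on $t_i\in\{0,1\}$; but as written you have skipped the hard half of the proof.
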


\begin{proof}
Let $\mathfrak{M} _{0,\F}$ be the 
lattice of $M_{\F}$ generated by 
the basis giving 
\[
 M_{\F} \sim 
 \Biggl(
 \begin{pmatrix}
  u^e & u \\ 0 & 1 
 \end{pmatrix}
 \Biggr)_i ,
\] 
and let 
$\mathfrak{M} _{0,\F'} =
\mathfrak{M} _{0,\F} \otimes _{\F} \F'$ 
for finite extensions $\F'$ of $\F$. 
Then $\mathfrak{M} _{0,\F'}$ 
gives a point of 
$\mathscr{GR}_{V_{\F},0} (\F')$. 
By the Iwasawa decomposition, 
any point $\mathfrak{M} _{\F'}$ 
of 
$\mathscr{GR}_{V_{\F},0} (\F')$ 
is written as 
$\Biggl(
\begin{pmatrix}
 u^{-s_i} & v_i \\ 0 & u^{t_i}
\end{pmatrix}
\Biggr)_i \cdot 
\mathfrak{M} _{0,\F'}$ 
for 
$s_i ,t_i \in \mathbb{Z}$ and 
$v_i \in \F((u))$. 
Then we have 
\begin{align*}
 \mathfrak{M} _{\F'} \sim &
 \Biggl(
 \begin{pmatrix}
  u^{-ps_i} & \phi (v_i) \\ 0 & u^{pt_i}
 \end{pmatrix}
 \begin{pmatrix}
  u^e & u \\ 0 & 1 
 \end{pmatrix}
 \begin{pmatrix}
  u^{s_{i+1}} & -v_{i+1} u^{s_{i+1} -t_{i+1} } \\ 
  0 & u^{-t_{i+1} }
 \end{pmatrix}
 \Biggr)_i \\
 &=
 \Biggl(
 \begin{pmatrix}
  u^{e-ps_i +s_{i+1}} & 
  u^{1-ps_i -t_{i+1} }+\phi (v_i )u^{-t_{i+1} } 
  -v_{i+1} u^{e-ps_i +s_{i+1} -t_{i+1} } \\ 
  0 & u^{pt_i -t_{i+1} }
 \end{pmatrix}
 \Biggr)_i
\end{align*}
with respect to the basis induced 
from the given basis of $\mathfrak{M} _{0,\F'}$. 
We put $r_i =-v_u (v_i)$. 

By 
$u^e \mathfrak{M}_{\F'} \subset 
(1\otimes \phi ) \bigl( \phi ^* (\mathfrak{M}_{\F'}) \bigr)
\subset \mathfrak{M}_{\F'}$, 
we have 
$e-ps_i +s_{i+1} \leq e$ and 
$pt_i -t_{i+1} \geq 0$ for all $i$, 
so we get $s_i ,t_i \geq 0$ for all $i$. 

We are going to show that $1-ps_i -t_{i+1} \geq 0$ for all $i$. 
We assume that $1-ps_{i_0} -t_{i_0 +1} <0$ for some $i_0$. 
Then 
$v_u (v_{i_0 +1} u^{e-ps_{i_0} +s_{i_0 +1} -t_{i_0 +1} })
\leq 1-ps_{i_0} -t_{i_0 +1}$, 
because $\phi (v_{i_0} )u^{-t_{i_0 +1} }$ has 
no term of degree $1-ps_{i_0} -t_{i_0 +1}$. 
So we get $r_{i_0 +1} -s_{i_0 +1} \geq e-1 \geq 0$. 
Take an index $i_1$ such that 
$r_{i_1} -s_{i_1}$ 
is the maximum. 
We note that $r_{i_1} -s_{i_1} \geq 0$. 
Then we have 
$v_u \bigl( \phi (v_{i_1} )u^{-t_{i_1 +1}}\bigr) =
 v_u (v_{i_1 +1} u^{e-ps_{i_1} +s_{i_1 +1} -t_{i_1 +1} })$, 
because 
$v_u \bigl( \phi (v_{i_1} )u^{-t_{i_1 +1}} \bigr) 
 \leq -ps_{i_1} -t_{i_1 +1}$. 
So we get 
$r_{i_1 +1} -s_{i_1 +1} = p(r_{i_1} -s_{i_1} )+e 
 > r_{i_1} -s_{i_1}$. 
This is a contradiction. 
Thus we have proved that 
$1-ps_i -t_{i+1} \geq 0$ for all $i$, 
and this is equivalent to that
$s_i =0$ and $0 \leq t_i \leq 1$ for all $i$. 

We assume $t_i =1$ for some $i$. 
Then we have $t_i =1$ for all $i$, because 
$pt_{i-1} -t_i \geq 0$ for all $i$.
We show that $r_i \leq -1$ for all $i$. 
We take an index $i_2$ such that 
$r_{i_2}$ is the maximum, 
and assume that 
$r_{i_2} \geq 0$.
Then we have $r_{i_2 +1} =pr_{i_2} +e>r_{i_2}$, 
because 
$v_u \bigl(1+\phi (v_{i_2})u^{-1} -v_{i_2 +1} u^{e-1} \bigr) \geq 0$. 
This is a contradiction. 
So we have $r_i \leq -1$ for all $i$.
Then we may assume $v_i =0$ for all $i$ 
by Lemma \ref{equiv}. 
Now we have 
$\mathfrak{M} _{\F'} \sim 
 \Biggl(
 \begin{pmatrix}
  u^e & 1 \\ 0 & u^{p-1} 
 \end{pmatrix}
 \Biggr)_i$, 
but this contradicts 
$u^e \mathfrak{M}_{\F'} \subset 
 (1\otimes \phi ) \bigl( \phi ^* (\mathfrak{M}_{\F'}) \bigr)$.

Thus we have proved $s_i =t_i =0$ for all $i$. 
Then we have $r_i \leq 0$, 
because $v_u (u+\phi(v_i )-v_{i+1} u^e) \geq 0$.
So we may assume $v_i =0$ for all $i$ 
by Lemma \ref{equiv}, 
and we have 
$\mathfrak{M}_{\F'} =\mathfrak{M}_{0,\F'}$. 
This shows that $\mathscr{GR}_{V_{\F},0} (\F')$ 
is one point. 
\end{proof} 

\section{The case where $V_{\F}$ is not absolutely irreducible}

In this section, 
we give the maximum of 
the dimensions of the moduli spaces 
in the case where 
$V_{\F}$ is not absolutely irreducible. 
We put 
$d_{V_{\F}} = 
 \dim \mathscr{GR}_{V_{\F},0}$. 
In the proof of the following Proposition, 
three Lemmas appear. 

\begin{prop}\label{reducible}
We assume $V_{\F}$ is not absolutely irreducible, 
and write $e=(p+1)e_0 +e_1$ for $e_0 \in \mathbb{Z}$ and 
$0 \leq e_1 \leq p$. 
Then the followings are true. 
\begin{enumerate}
\item

There are $m_i \in \mathbb{Z}$ 
for $0 \leq i \leq d_{V_{\F}}$ 
such that $m_i \geq 0$, 
$m_{d_{V_{\F}}} >0$ and 
\[
 |\mathscr{GR}_{V_{\F},0} (\F')| =
 \sum_{i=0} ^{d_{V_{\F}}} 
 m_i |\F'|^i 
\]
for all sufficiently large 
extensions $\F'$ of $\F$. 
\item 
\begin{enumerate}
\item 
In the case $0 \leq e_1 \leq p-2$, 
we have $d_{V_{\F}} \leq ne_0$. 
In this case, if 
\[
 M_{\F} \sim
 \Biggl( 
 \begin{pmatrix}
  u^{e_0} & 0 \\ 0 & u^{pe_0}
 \end{pmatrix}
 \Biggr)_i , 
\]
then $d_{V_{\F}} = ne_0$. 
\item 
In the case $e_1 = p-1$, 
we have $d_{V_{\F}} \leq ne_0 +1$. 
In this case, if 
\[
 M_{\F} \sim
 \Biggl( 
 \begin{pmatrix}
  u^{e_0} & 0 \\ 0 & u^{pe_0 +p-1}
 \end{pmatrix}
 \Biggr)_i , 
\] 
then $d_{V_{\F}} = ne_0 +1$. 
\item 
In the case $e_1 = p$, 
we have 
$d_{V_{\F}} \leq 
 ne_0 +\max \bigl\{ [n/2],1 \bigr\}$.
In this case, if $n=1$ and 
\[
 M_{\F} \sim
 \begin{pmatrix}
  u^{e_0} & 0 \\ 0 & u^{pe_0 +p-1}
 \end{pmatrix}, 
\]
then $d_{V_{\F}} = e_0 +1$, 
and if $n \geq 2$ and 
\[
 M_{\F} \sim 
 \Biggl(
 \begin{pmatrix}
  u^{e_{0,i} } & 0 \\ 0 & u^{p(2e_0 +1 -e_{0,i} )} 
 \end{pmatrix} 
 \Biggr)_i , 
\]
then $d_{V_{\F}} = ne_0 +[n/2]$. 
Here, $e_{0,i} =e_0$ 
if $i$ is odd, 
and $e_{0,i} =e_0 +1$ 
if $i$ is even. 
\end{enumerate}
\end{enumerate}
\end{prop}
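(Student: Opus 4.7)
The plan is to parametrize $\mathscr{GR}_{V_{\F},0}$ by Iwasawa-type coordinates that reduce the question to a discrete combinatorial problem. Since $V_{\F}$ is not absolutely irreducible, after enlarging $\F$ the module $M_{\F}$ admits a $\phi$-stable rank-one submodule, so we can choose a basis of $M_{\F}$ in which every matrix $A_i$ describing $\phi$ is upper triangular. Using the Iwasawa decomposition of $GL_2(\F'((u)))$ at each component, every lattice $\mathfrak{M}_{\F'} \in \mathscr{GR}_{V_{\F},0}(\F')$ can be written as
\[
 \mathfrak{M}_{\F'} = \Biggl( \begin{pmatrix} u^{-s_i} & v_i \\ 0 & u^{t_i} \end{pmatrix} \Biggr)_i \cdot \mathfrak{M}_{0,\F'},
\]
where $\mathfrak{M}_{0,\F'}$ is a fixed reference lattice and $s_i, t_i \in \mathbb{Z}$, $v_i \in \F'((u))$.

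Next, substituting this expression into the formula for $\phi$ and imposing the conditions of Proposition \ref{description}(2), exactly as in the calculation of Proposition \ref{onepoint}, translates the Kisin-lattice conditions into explicit inequalities on the $(s_i, t_i)$ and a lower bound on $v_u(v_i)$ that depends on $(s_{i-1}, t_{i-1}, v_{i-1})$. Lemma \ref{equiv} then shows that, once $(s_i, t_i)$ is fixed, each $v_i$ is determined modulo $u^{t_i}\F'[[u]]$ by finitely many coefficients lying in an affine $\F'$-space of explicitly computable dimension. Over $\overline{\F}$ this stratifies $\mathscr{GR}_{V_{\F},0}$ set-theoretically into a disjoint union of affine cells indexed by admissible discrete tuples $(s_i, t_i)$. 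The disjoint decomposition into affine spaces makes $|\mathscr{GR}_{V_{\F},0}(\F')|$ a polynomial in $|\F'|$ with nonnegative coefficients for sufficiently large $\F'$, which gives (1); the strict positivity $m_{d_{V_{\F}}} > 0$ just expresses that some top-dimensional cell exists.

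For (2), the upper bound on $d_{V_{\F}}$ becomes a combinatorial optimization: maximize the total cell dimension over admissible $(s_i, t_i)$. Writing $e = (p+1)e_0 + e_1$, the cyclic inequalities $0 \le s_i$, $s_{i+1} \le ps_i$, $ps_i - s_{i+1} \le e$, and the analogous constraints on $t_i$, together with the offset condition on $v_u(v_i)$ coming from the off-diagonal entry, force each $s_i$ and $t_i$ into a narrow range. Summing the per-index dimension contributions and saturating the cyclic system yields the three sub-bounds $ne_0$, $ne_0 + 1$, and $ne_0 + \max\{[n/2], 1\}$ according to whether $e_1 \le p-2$, $e_1 = p-1$, or $e_1 = p$; the case division encodes how many indices can simultaneously saturate the cyclic inequalities without creating an obstruction in the off-diagonal term.

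Finally, for the optimality assertions, I take the explicit $M_{\F}$ listed in (2)(a)--(c) and exhibit, in the same Iwasawa coordinates, an affine cell of the claimed top dimension, verifying directly that it satisfies all the Kisin-lattice conditions. The main obstacle is the combinatorial optimization: the cell dimension is not simply a local sum $\sum_i f(s_i,t_i)$, because the constraint on $v_i$ links consecutive indices through $v_{i-1}$; and in the boundary case $e_1 = p$, $n \ge 2$ one must show that an alternating tuple $e_{0,i}$ taking two values on even and odd $i$ outperforms the constant choice, which is precisely where the extra $[n/2]$ gain appears.
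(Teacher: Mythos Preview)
Your outline matches the paper's strategy exactly through the stratification step: upper-triangularize $M_{\F}$, use the Iwasawa decomposition, and stratify by the discrete data (the paper indexes by $(\underline{a},\underline{b})$ with $a_i=a_{0,i}+ps_i-s_{i+1}$, $b_i=b_{0,i}+pt_i-t_{i+1}$, which is equivalent to your $(s_i,t_i)$). The paper then shows, just as you say, that each stratum is an affine space over $\F'$ and deduces (1).

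The gap is in the combinatorial optimization, which is where essentially all the work lies and which your sketch does not actually carry out. The coupled condition on the off-diagonal entries is
\[
 v_u\bigl(\alpha_i u^{a_i} v_{i+1}-\beta_i u^{b_i}\phi(v_i)\bigr)\ \ge\ \max\{0,a_i+b_i-e\},
\]
and the dimension of the resulting space $N_{\underline{a},\underline{b}}$ is \emph{not} a sum of local contributions in $i$, as you yourself note. The paper handles this by building an explicit basis of $N_{\underline{a},\underline{b}}$ out of three kinds of elements: local ones $S_{\underline{a},\underline{b},i}$, ``chain'' elements $S_{\underline{a},\underline{b},i,j}$ that propagate a leading term through several consecutive indices, and at most one ``cyclic'' element $S_{\underline{a},\underline{b}}$. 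The chain contributions are then controlled by auxiliary sets $T_{\underline{a},\underline{b},i}$, and a nontrivial replacement argument shows that at a maximizing $(\underline{a},\underline{b})$ one has $|T_{\underline{a},\underline{b},i}|\le 1$ for every $i$. Only after this reduction do the genuinely cyclic estimates appear: the paper proves cascading inequalities (their $(A_i)$ and $(B_i)$) saying that if index $i$ contributes $e_0+l$ then index $i{+}1$ contributes at most $e_0+e_1-pl+1$, and if two consecutive indices are tight then the third drops further. Summing these is what produces the three bounds $ne_0$, $ne_0+1$, $ne_0+[n/2]$. Finally, in cases (a) and (c) with $n\ge 2$ one must separately rule out the extra $+1$ from the cyclic element $S_{\underline{a},\underline{b}}$, which needs another delicate argument pinning down $(a_i,b_i)$ exactly at a putative maximum. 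None of these mechanisms---the basis lemma, the $|T|\le 1$ bound, the cascading $(A_i)/(B_i)$, or the exclusion of the cyclic contribution---is present in your sketch, and the inequalities you write on $(s_i,t_i)$ alone do not by themselves bound $\dim N_{\underline{a},\underline{b}}$. Your identification of the ``main obstacle'' is accurate; what is missing is the machinery that overcomes it.
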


\begin{proof}
Extending the field $\F$, we may assume that 
$V_{\F}$ is reducible. 
Let $\mathfrak{M} _{0,\F}$ be a lattice of 
$M_{\F}$ corresponding to a point of 
$\mathscr{GR}_{V_{\F},0} (\F)$. 
Then we take and fix a basis of 
$\mathfrak{M} _{0,\F}$ over 
$k[[u]] \otimes _{\F_p} \F$ 
such that 
$\mathfrak{M} _{0,\F} \sim 
\Biggl(
\begin{pmatrix} 
\alpha_i u^{a_{0,i}} & w_{0,i} \\ 0 & \beta_i u^{b_{0,i}} 
\end{pmatrix}
\Biggr)_i$ 
for 
$\alpha_i ,\beta_i \in \F^{\times}$, 
$0 \leq a_{0,i} ,b_{0,i} \leq e$ and 
$w_{0,i} \in \F[[u]]$. 
For any finite extension $\F'$ of $\F$, 
we put 
$\mathfrak{M} _{0,\F'} =
\mathfrak{M} _{0,\F} \otimes _{\F} \F'$ 
and 
$M_{\F'} =M_{\F} \otimes _{\F} \F'$. 
By the Iwasawa decomposition, 
any sublattice of $M_{\F'}$ can be written as 
$\Biggl(\begin{pmatrix}
  u^{s_i} & v_i ' \\ 0 & u^{t_i} 
 \end{pmatrix}
 \Biggr)_i \cdot 
 \mathfrak{M} _{0,\F'}$
for $s_i ,t_i  \in \mathbb{Z}$ and $v_i ' \in \F'((u))$. 

We put 
\[
 I=\bigl\{
 (\underline{a},\underline{b}) 
 \in \mathbb{Z}^n \times \mathbb{Z}^n \bigm| 
 \underline{a}=(a_i )_{1 \leq i \leq n},\  
 \underline{b}=(b_i )_{1 \leq i \leq n},\ 
 0 \leq a_i ,b_i \leq e
 \bigr\}, 
\]
and 
\begin{align*}
 \mathscr{GR}_{V_{\F},0,\underline{a},\underline{b}}(\F') &=
 \Biggl\{
 \Biggl(
 \begin{pmatrix}
 u^{s_i } & v_i ' \\ 0 & u^{t_i } 
 \end{pmatrix} 
 \Biggr)_i \cdot 
 \mathfrak{M} _{0,\F'}
 \in
 \mathscr{GR}_{V_{\F},0}(\F') 
 \Biggm| 
 s_i ,t_i \in \mathbb{Z}, 
 v_i ' \in \F'((u)), \\ 
 & \hspace*{10em}
 a_i = a_{0,i} +ps_i -s_{i+1} ,\ b_i =b_{0,i} +pt_i -t_{i+1} 
\Biggr\} 
\end{align*}
for 
$(\underline{a},\underline{b}) =
 \bigl( (a_i )_{1 \leq i \leq n}, 
 (b_i )_{1 \leq i \leq n} \bigr) \in I$. 
Then we have 
\[
 \mathscr{GR}_{V_{\F},0}(\F') =
 \bigcup_{(\underline{a},\underline{b}) \in I} 
 \mathscr{GR}_{V_{\F},0,\underline{a},\underline{b}}(\F'), 
\]
and this is a disjoint union 
by Lemma \ref{equiv}.

Take 
$\mathfrak{M} _{\F'} = 
 \Biggl(
 \begin{pmatrix}
  u^{s_i } & v_i ' \\ 0 & u^{t_i } 
 \end{pmatrix}
 \Biggr)_i \cdot 
 \mathfrak{M} _{0,\F'} 
 \in \mathscr{GR}_{V_{\F},0,\underline{a},\underline{b}}(\F')$ 
with the basis 
induced from the basis of $\mathfrak{M} _{0,\F'}$, 
then 
$\mathfrak{M} _{\F'} \sim 
\Biggl(
\begin{pmatrix}
 \alpha_i u^{a_i} & w_i \\ 0 & \beta_i u^{b_i}
\end{pmatrix}
\Biggr)_i$ 
for some 
$(w_i )_{1 \leq i \leq n} \in \F'[[u]]^n$. 
We note that 
$a_i +b_i -v_u (w_i)\leq e$ 
for all $i$ by the condition 
$u^e \mathfrak{M}_{\F'} \subset 
(1\otimes \phi ) \bigl( \phi ^* (\mathfrak{M}_{\F'}) \bigr)$. 

Now, any 
$\mathfrak{M}' _{\F'} \in \mathscr{GR}_{V_{\F},0,\underline{a},\underline{b}}(\F')$ 
can be written as 
$\Biggl(
 \begin{pmatrix}
 1 & v_i \\ 0 & 1
 \end{pmatrix} 
 \Biggr)_i \cdot
 \mathfrak{M} _{\F'}$ 
for some $(v_i )_{1 \leq i \leq n} \in \F'((u))^n$. 
With the basis induced from $\mathfrak{M} _{\F'}$, 
we have 
\begin{align*}
 \mathfrak{M}' _{\F'} \sim & 
 \Biggl( 
 \begin{pmatrix}
 1 & \phi(v_i) \\ 0 & 1
 \end{pmatrix}
 \begin{pmatrix}
 \alpha_i u^{a_i} & w_i \\ 0 & \beta_i u^{b_i}
 \end{pmatrix}
 \begin{pmatrix}
 1 & -v_{i+1} \\ 0 & 1
 \end{pmatrix}
 \Biggr)_i \\
 &=
 \Biggl(
 \begin{pmatrix}
 \alpha_i u^{a_i} & 
 w_i -\alpha_i u^{a_i} v_{i+1}+\beta_i u^{b_i} \phi(v_i) 
 \\ 0 & \beta_i u^{b_i}
 \end{pmatrix}
 \Biggr)_i . 
\end{align*}
We are going to examine the condition 
for $(v_i )_{1 \leq i \leq n} \in \F'((u))^n$ 
to give a point of 
$\mathscr{GR}_{V_{\F},0,\underline{a},\underline{b}}(\F')$
as 
$\Biggl(
 \begin{pmatrix}
 1 & v_i \\ 0 & 1
 \end{pmatrix} 
 \Biggr)_i \cdot
 \mathfrak{M} _{\F'}$. 
Extending the field $\F$, 
we may assume that 
$\mathscr{GR}_{V_{\F},0,\underline{a},\underline{b}}(\F) =\emptyset$ 
if and only if 
$\mathscr{GR}_{V_{\F},0,\underline{a},\underline{b}}(\F') =\emptyset$ 
for each $(\underline{a},\underline{b}) \in I$ and 
any finite extension $\F'$ of $\F$. 

For $(v_i )_{1 \leq i \leq n} \in \F'((u))^n$, 
we have 
$\mathfrak{M}' _{\F'} =
 \Biggl(
 \begin{pmatrix}
 1 & v_i \\ 0 & 1
 \end{pmatrix} 
 \Biggr)_i \cdot
 \mathfrak{M} _{\F'} 
 \in 
 \mathscr{GR}_{V_{\F},0,\underline{a},\underline{b}}(\F')$ 
if and only if 
\begin{align*}
 v_u \bigl( w_i -\alpha_i & u^{a_i} v_{i+1} 
 +\beta_i u^{b_i} \phi(v_i )\bigl) \geq 0
 \textrm{ and }\\& 
 v_u (\alpha_i u^{a_i} ) +
 v_u (\beta_i u^{b_i} ) -
 v_u \bigl( w_i -\alpha_i u^{a_i} v_{i+1} 
 +\beta_i u^{b_i} \phi(v_i )\bigl) \leq e
 \textrm{ for all } i, 
\end{align*}
by the condition 
$u^e \mathfrak{M}'_{\F'} \subset 
 (1\otimes \phi ) \bigl( \phi ^* (\mathfrak{M}'_{\F'}) \bigr)
 \subset \mathfrak{M}'_{\F'}$. 
This is further equivalent to 
\[
 v_u \bigl(\alpha_i u^{a_i} v_{i+1} 
 -\beta_i u^{b_i} \phi(v_i )\bigl) \geq 
 \max \{0,a_i +b_i -e \}, 
\]
because $v_u (w_i) \geq \max \{0,a_i +b_i -e \}$. 
We put $r_i =-v_u (v_i )$, 
and note that 
\begin{align*}
 v_u (\alpha_{i-1} u^{a_{i-1}} v_i ) \geq 
 \max \{0,a_{i-1} +b_{i-1} -e \} &
 \Leftrightarrow 
 r_i \leq \min \{a_{i-1} ,e-b_{i-1} \}, \\
  v_u \bigl( \beta_i u^{b_i} \phi(v_i )\bigl) \geq 
 \max \{0,a_i +b_i -e \} &
 \Leftrightarrow 
 r_i \leq \min \biggl\{ 
 \frac{e-a_i}{p}, \frac{b_i}{p} \biggr\}. 
\end{align*}

We define an $\F'$-vector space 
$\widetilde{N}_{\underline{a},\underline{b},\F'}$ by 
\begin{align*}
 \widetilde{N}_{\underline{a},\underline{b},\F'} &=
 \bigl\{
 (v_1 ,\ldots ,v_n ) \in \F'((u))^n \bigm| \\
 & \hspace*{7em}
 v_u \bigl(\alpha_i u^{a_i} v_{i+1} 
 -\beta_i u^{b_i} \phi(v_i )\bigl) \geq 
 \max \{0,a_i +b_i -e \} 
 \textrm{ for all } i 
 \bigr\}. 
\end{align*}
We note that 
$\widetilde{N}_{\underline{a},\underline{b},\F'}
 \supset \F'[[u]]^n$, 
and put 
$N_{\underline{a},\underline{b},\F'}
 =\widetilde{N}_{\underline{a},\underline{b},\F'} 
 \big/ \F'[[u]]^n$. 
Then we have a bijection 
$N_{\underline{a},\underline{b},\F'} \to 
 \mathscr{GR}_{V_{\F},0,\underline{a},\underline{b}}(\F')$ 
by Lemma \ref{equiv}. 
We put 
$d_{\underline{a},\underline{b}} =\dim_{\F'}
 N_{\underline{a},\underline{b},\F'}$, 
and note that 
$\dim_{\F'} N_{\underline{a},\underline{b},\F'}$ 
is independent of finite extensions $\F'$ of $\F$. 

We take a basis 
$(\mathbf{v}_j )_{1 \leq j \leq d_{\underline{a},\underline{b}} }$ of 
$N_{\underline{a},\underline{b},\F}$ over $\F$, 
where $\mathbf{v}_j =(v_{j,1} ,\ldots ,v_{j,n} ) \in \F((u))^n$. 
Then, by Proposition \ref{description}, an 
$(\F[[u]] \otimes_{\F} 
 \F [X_1 ,\ldots ,X_{d_{\underline{a},\underline{b}}} ])$-module 
\[
 \mathfrak{M}' _{\F [X_1 ,\ldots ,X_{d_{\underline{a},\underline{b}}} ]}= 
 \Biggl(
 \begin{pmatrix}
 1 & \sum_j v_{j,i} X_j \\ 0 & 1
 \end{pmatrix} 
 \Biggr)_i \cdot
 (\mathfrak{M} _{\F} \otimes_{\F} 
 \F [X_1 ,\ldots ,X_{d_{\underline{a},\underline{b}}} ] )
\]
gives a morphism 
$f_{\underline{a},\underline{b}} : 
 \mathbb{A}^{d_{\underline{a},\underline{b}}} _{\F}
 \to \mathscr{GR}_{V_{\F},0}$ 
such that $f_{\underline{a},\underline{b}} (\F')$ is injective and 
the image of $f_{\underline{a},\underline{b}} (\F')$ is 
$\mathscr{GR}_{V_{\F},0,\underline{a},\underline{b}}(\F')$. 
Then we have (1) and 
\[ 
 d_{V_{\F}} =\max_{(\underline{a},\underline{b}) \in I,\ 
 \mathscr{GR}_{V_{\F},0,\underline{a},\underline{b}}(\F) \neq \emptyset } 
 \{ d_{\underline{a},\underline{b}} \}. 
\] 

 Before going into a proof of (2), 
we will examine 
$d_{\underline{a},\underline{b}}$ 
to evaluate $d_{V_{\F}}$. 
We put
\begin{align*}
 S_{\underline{a},\underline{b},i} &=\Biggl\{
 (0,\ldots ,0,v_i ,0, \ldots ,0) \in \F((u))^n 
 \Biggm|  v_i =u^{-r_i}, \\
 & \hspace*{14em}
 1 \leq r_i \leq 
 \min \biggl\{
 a_{i-1} ,e-b_{i-1} ,\frac{e-a_i}{p},\frac{b_i}{p} 
 \biggr\} \Biggr\} \\
\intertext{for $1 \leq i \leq n$,}
 S_{\underline{a},\underline{b},i,j} &=\Biggl\{
 (0,\ldots ,0,v_i ,v_{i+1},\ldots ,v_{i+j} ,0,\ldots ,0) \in \F((u))^n 
 \Biggm| v_i =u^{-r_i},\ \\ 
 & \hspace*{2.8em}
 1\leq r_i \leq \min \{a_{i-1} ,e-b_{i-1} \},\  
 \alpha_{i+l} u^{a_{i+l}} v_{i+l+1} = 
 \beta_{i+l} u^{b_{i+l}} \phi(v_{i+l}) \\ 
 & \hspace*{6.8em}
 \textrm{and }
 -\!v_u (v_{i+l+1} )> 
 \min \{a_{i+l} ,e-b_{i+l} \} 
 \textrm{ for } 
 0 \leq l \leq j-1,\ \\
 & \hspace*{16.5em}
 -v_u (v_{i+j}) \leq 
 \min \biggl\{
 \frac{e-a_{i+j}}{p},\frac{b_{i+j}}{p} 
 \biggr\} \Biggr\}\\ 
\intertext{for $1\leq i \leq n$ and $1 \leq j \leq n-1$, and}
 S_{\underline{a},\underline{b}} &=\Bigl\{
 (v_1,\ldots ,v_n ) \in \F((u))^n 
 \Bigm| 
 \alpha_i u^{a_i} v_{i+1} = 
 \beta_i u^{b_i } \phi(v_i ),\ 
 v_1 =u^{v_u (v_1 )} \\ 
 & \hspace*{12.4em}
 \textrm{and }
 -\!v_u (v_{i+1} ) > 
 \min \{a_i ,e-b_i \} 
 \textrm{ for all } i 
 \Bigr\}.
\end{align*}
In the above definitions, $v_i$ is on the $i$-th component. 
Clearly, all elements of 
$\bigcup_{i} S_{\underline{a},\underline{b},i} 
 \cup \bigcup_{i,j} S_{\underline{a},\underline{b},i,j} 
 \cup S_{\underline{a},\underline{b}}$ 
are in 
$\widetilde{N}_{\underline{a},\underline{b},\F}$. 

\begin{lem}\label{basis}
The image of 
$\bigcup_{i} S_{\underline{a},\underline{b},i} 
 \cup \bigcup_{i,j} S_{\underline{a},\underline{b},i,j} 
 \cup S_{\underline{a},\underline{b}}$ 
in $N_{\underline{a},\underline{b},\F}$ 
forms an $\F$-basis of $N_{\underline{a},\underline{b},\F}$. 
\end{lem}
\begin{proof}
It is clear that the image of 
$\bigcup_{i} S_{\underline{a},\underline{b},i} 
 \cup \bigcup_{i,j} S_{\underline{a},\underline{b},i,j} 
 \cup S_{\underline{a},\underline{b}}$ 
in $N_{\underline{a},\underline{b},\F}$ 
are linearly independent over $\F$. 
So it suffices to show that 
$\bigcup_{i} S_{\underline{a},\underline{b},i} 
 \cup \bigcup_{i,j} S_{\underline{a},\underline{b},i,j} 
 \cup S_{\underline{a},\underline{b}}$ and 
$\F [[u]]^n$ generates 
$\widetilde{N}_{\underline{a},\underline{b},\F}$. 
We take 
$(v_1 ,\ldots ,v_n ) \in \widetilde{N}_{\underline{a},\underline{b},\F}$. 
We want to write 
$(v_1 ,\ldots ,v_n )$ as 
a linear combination of elements of 
$\bigcup_{i} S_{\underline{a},\underline{b},i} 
 \cup \bigcup_{i,j} S_{\underline{a},\underline{b},i,j} 
 \cup S_{\underline{a},\underline{b}}$ and 
$\F [[u]]^n$. 

First, we consider the case where 
there exsits an index $i_0$ such that 
$-v_u (v_{i_0} ) >\min 
 \{ a_{i_0 -1} ,e-b_{i_0 -1} ,(e-a_{i_0} )/p ,b_{i_0} /p \}$. 
Then there are following two cases: 
\begin{enumerate}
\item[(i)]
There are $1 \leq i_1 \leq n$ and $1 \leq j_1 \leq n-1$ such that \\ 
$i_0 \in [i_1 ,i_1 +j_1 ]$, 
$1 \leq -v_u (v_{i_1} ) \leq \min \{a_{i_1 -1} ,e-b_{i_1 -1} \}$, \\
$a_{i_1 +l} +v_u (v_{i_1 +l+1} ) = b_{i_1 +l} +p v_u (v_{i_1 +l})$\\ 
and $-v_u (v_{i_1 +l+1} )> \min \{a_{i+l} ,e-b_{i+l} \}$ 
for $0 \leq l \leq j_1 -1$\\ 
and $-v_u (v_{i_1 +j_1}) \leq \min 
 \{(e-a_{i_1 +j_1} )/p,(b_{i_1 +j_1 } )/p \}$. 
\item[(ii)]
$a_i +v_u (v_{i+1} )=b_i +pv_u (v_i )$ and 
$-v_u (v_{i+1} ) > \min \{a_i ,e-b_i \}$ for all $i$.
\end{enumerate}
In the case (i), 
we can subtract a linear multiple of an element of 
$S_{\underline{a},\underline{b},i_1 ,j_1 }$ 
from 
$(v_1 ,\ldots ,v_n )$ so that 
the $u$-valuations of the $i$-th component increase 
for all $i \in [i_1 ,i_1 +j_1 ]$. 
In the case (ii), 
we can subtract a linear multiple of an element of 
$S_{\underline{a},\underline{b}}$ 
from 
$(v_1 ,\ldots ,v_n )$ so that 
the $u$-valuations of the $i$-th component increase 
for all $i$. 

Repeating such subtractions, 
we may assume that 
$-v_u (v_{i} ) \leq \min 
 \{ a_{i-1} ,e-b_{i-1} ,(e-a_i )/p ,b_i /p \}$ for all $i$. 
Then we can write 
$(v_1 ,\ldots ,v_n )$ as 
a linear combination of elements of 
$\bigcup_{i} S_{\underline{a},\underline{b},i}$ and 
$\F [[u]]^n$. 
\end{proof}

By Lemma \ref{basis}, we have 
$d_{\underline{a},\underline{b}} =
 \sum_i |S_{\underline{a},\underline{b},i}| + 
 \sum_{i,j} |S_{\underline{a},\underline{b},i,j}| 
 +|S_{\underline{a},\underline{b}}|$. 
We note that 
$0 \leq |S_{\underline{a},\underline{b}}| \leq 1$ 
by the definition, and put 
$d' _{\underline{a},\underline{b}} =
 \sum_i |S_{\underline{a},\underline{b},i}| + 
 \sum_{i,j} |S_{\underline{a},\underline{b},i,j}|$. 

We put 
\[
 T_{\underline{a},\underline{b},i} =\Biggl\{
 m \in \mathbb{Z} \Biggm| 
 \min \{a_{i-1} ,e-b_{i-1} \} < pm +a_{i-1} -b_{i-1} 
 \leq \min \biggl\{
 \frac{e-a_i }{p}, \frac{b_i }{p} 
 \biggr\} \Biggr\}, 
\]
and consider the map
\[
 \bigcup_{i+j=h} S_{\underline{a},\underline{b},i,j} 
 \to T_{\underline{a},\underline{b},h} ;\ 
 (v_{i'})_{1 \leq i' \leq n} \mapsto -v_u (v_{h-1}). 
\]
We can easily check that this map is injective. 
So we have 
$\sum_{i+j=h} |S_{\underline{a},\underline{b},i,j}| 
 \leq |T_{\underline{a},\underline{b},h}|$ 
and 
$d' _{\underline{a},\underline{b}} \leq 
 \sum_{1\leq i \leq n} \bigl( 
 |S_{\underline{a},\underline{b},i}|+
 |T_{\underline{a},\underline{b},i}| \bigr)$. 

We take 
$(\underline{a}',\underline{b}') \in I$  
such that 
$\sum_{1\leq i \leq n} \bigl( 
 |S_{\underline{a}',\underline{b}',i}|
 +|T_{\underline{a}',\underline{b}',i}| \bigr)$ 
is the maximum. 

\begin{lem}\label{boundT}
$|T_{\underline{a}',\underline{b}',i}| \leq 1$ for all $i$. 
\end{lem}
\begin{proof}
We assume there is an index $i_0$ 
such that $|T_{\underline{a}',\underline{b}',i_0}| \geq 2$. 
We note that 
\begin{equation}
 \min \{a_{i_0 -1} ',e-b_{i_0 -1} '\} 
 +p+1 \leq 
 \min \biggl\{
 \frac{e-a_{i_0} '}{p}, \frac{b_{i_0} '}{p} 
 \biggr\} \tag{$*$} \label{eq:a',b'}
\end{equation}
by $|T_{\underline{a}',\underline{b}',i_0}| \geq 2$. 
We are going to show that we can replace 
$a_{i_0 -1} ',b_{i_0 -1} '$ so that 
$\sum_{1\leq i \leq n} 
 \bigl( |S_{\underline{a}',\underline{b}',i}|+
 |T_{\underline{a}',\underline{b}',i}| \bigr)$ 
increases. 
This contradicts the maximality of 
$\sum_{1\leq i \leq n} \bigl( 
 |S_{\underline{a}',\underline{b}',i}|
 +|T_{\underline{a}',\underline{b}',i}| \bigr)$. 
We divide the problem into three cases. 

Firstly, if 
$a_{i_0 -1} '+2 \leq e-b_{i_0 -1} '$, 
we replace $a_{i_0 -1} '$ by
$a_{i_0 -1} ' +p$, 
and note that 
$a_{i_0 -1} '+p \leq e$ by (\ref{eq:a',b'}). 
Then there is no change except for 
$S_{\underline{a}',\underline{b}',i_0 -1}$, 
$S_{\underline{a}',\underline{b}',i_0}$, 
$T_{\underline{a}',\underline{b}',i_0 -1}$ and 
$T_{\underline{a}',\underline{b}',i_0}$. 
We can see that 
$|S_{\underline{a}',\underline{b}',i_0}|$ increases 
by at least $2$. 
The condition that 
there exists $m \in \mathbb{Z}$ such that 
\[
 \min \{a_{i_0 -1} ',e-b_{i_0 -1} '\} 
 < pm+a_{i_0 -1} '-b_{i_0 -1} ' \leq 
 \min \{a_{i_0 -1} '+p,e-b_{i_0 -1} '\}, 
\]
is equivalent to the condition that 
there exists $m \in \mathbb{Z}$ such that
\[
 \min \biggl\{
 \frac{e-a_{i_0 -1} '}{p} ,
 \frac{b_{i_0 -1} '}{p} 
 \biggr\} < m \leq 
 \min \biggl\{
 \frac{e-a_{i_0 -1} '}{p} ,
 \frac{b_{i_0 -1} '}{p} +1 
 \biggr\}, 
\]
and further equivalent to the condition 
that there does not exists $m \in \mathbb{Z}$ such that
\[
 \min \biggl\{
 \frac{e-a_{i_0 -1} '}{p} -1,
 \frac{b_{i_0 -1} '}{p} 
 \biggr\} < m \leq 
 \min \biggl\{
 \frac{e-a_{i_0 -1} '}{p} ,
 \frac{b_{i_0 -1} '}{p} 
 \biggr\}. 
\]
If the above condition is satisfied, 
then 
$|S_{\underline{a}',\underline{b}',i_0 -1}|$, 
$|T_{\underline{a}',\underline{b}',i_0 -1}|$ 
do not change 
and $|T_{\underline{a}',\underline{b}',i_0}|$ 
decreases by $1$. 
Otherwise, 
$|S_{\underline{a}',\underline{b}',i_0 -1}| +
 |T_{\underline{a}',\underline{b}',i_0 -1}|$ 
decreases by at most $1$ 
and $|T_{\underline{a}',\underline{b}',i_0}|$ does not change. 
In both cases, we have that 
$\sum_{1\leq i \leq n} 
 \bigl( |S_{\underline{a}',\underline{b}',i}|+
 |T_{\underline{a}',\underline{b}',i}| \bigr)$ 
increases by at least $1$. 

Secondly, if 
$a_{i_0 -1} '\geq e-b_{i_0 -1} '+2$, 
we replace $b_{i_0 -1} '$ by
$b_{i_0 -1} '-p$. 
Then, by the same arguments, 
we have that 
$\sum_{1\leq i \leq n} 
 \bigl( |S_{\underline{a}',\underline{b}',i}|+
 |T_{\underline{a}',\underline{b}',i}| \bigr)$ 
increases by at least $1$. 

In the remaining case, that is the case where 
$a_{i_0 -1} ' -1 \leq e-b_{i_0 -1} ' \leq a_{i_0 -1} '+1$, 
we replace 
$a_{i_0 -1} '$, $b_{i_0 -1} '$ by 
$a_{i_0 -1} ' +p$, $b_{i_0 -1} ' -p$ respectively, 
and note that $a_{i_0 -1} ' +p \leq e$ and 
$b_{i_0 -1} ' -p \geq 0$ by (\ref{eq:a',b'}). 
Then there is no change except for 
$S_{\underline{a}',\underline{b}',i_0 -1}$,
$S_{\underline{a}',\underline{b}',i_0}$,
$T_{\underline{a}',\underline{b}',i_0 -1}$ and 
$T_{\underline{a}',\underline{b}',i_0}$. 
We can see that 
$|S_{\underline{a}',\underline{b}',i_0 -1}| 
 +|T_{\underline{a}',\underline{b}',i_0 -1}|$ 
decreases by at most $1$, 
$|S_{\underline{a}',\underline{b}',i_0}|$ 
increases by $p$ 
and $|T_{\underline{a}',\underline{b}',i_0}|$ 
decreases by $1$. 
Hence 
$\sum_{1\leq i \leq n} 
 \bigl( |S_{\underline{a}',\underline{b}',i}|
 +|T_{\underline{a}',\underline{b}',i}| \bigr)$ 
increases by at least $p-2>0$. 

Thus we have proved that 
$|T_{\underline{a}',\underline{b}',i}| \leq 1$ for all $i$. 
\end{proof}

\begin{lem}\label{AB}
For all $i$, we have the followings: 

\begin{enumerate}
 \item[$(A_i)$] 
If 
$|S_{\underline{a}',\underline{b}',i}|+
 |T_{\underline{a}',\underline{b}',i}| = e_0 +l$ 
for $l \geq 1$,\\ 
then 
$|S_{\underline{a}',\underline{b}',i+1} |+
 |T_{\underline{a}',\underline{b}',i+1} | 
 \leq e_0 +e_1 -pl+1$. 
 \item[$(B_i)$] 
If 
$|S_{\underline{a}',\underline{b}',i}|+
 |T_{\underline{a}',\underline{b}',i}| = e_0 +1$\\ 
and  
$|S_{\underline{a}',\underline{b}',i+1} |+
 |T_{\underline{a}',\underline{b}',i+1} | 
 = e_0 +e_1 -p+1$,\\ 
then 
$|S_{\underline{a}',\underline{b}',i+2} |+
 |T_{\underline{a}',\underline{b}',i+2} | 
 \leq e_0 -(p-1)e_1 +1$. 
\end{enumerate}

\end{lem}

\begin{proof}
By the definition of 
$T_{\underline{a},\underline{b},i}$, we have 
\[
 |T_{\underline{a},\underline{b},i}| \leq 
 \max \Biggl\{ 
 \min \biggl\{
 \frac{e-a_i}{p}, \frac{b_i}{p} 
 \biggr\} - 
 \min \{a_{i-1} ,e-b_{i-1}\}, 
 0
 \Biggr\}. 
\]
Combining this with the definition of 
$S_{\underline{a},\underline{b},i}$, we get 
\begin{equation}
 |S_{\underline{a},\underline{b},i}| +
 |T_{\underline{a},\underline{b},i}| \leq 
 \min \Biggl\{ 
 \biggl[ \frac{e-a_i}{p} \biggr] , 
 \biggl[ \frac{b_i}{p} \biggr] 
 \Biggr\}, \tag{$\star$} \label{eq:S+T}
\end{equation}
and equality happens if and only if 
in the following two cases: 
\begin{itemize}
 \item
 $\min \biggl\{
 \Bigl[ \frac{e-a_i}{p} \Bigr], 
 \Bigl[ \frac{b_i}{p} \Bigr] 
 \biggr\} - 
 \min \{a_{i-1} ,e-b_{i-1}\} \leq 0$. 
 \item 
 $\min \biggl\{
 \Bigl[ \frac{e-a_i}{p} \Bigr], 
 \Bigl[ \frac{b_i}{p} \Bigr] 
 \biggr\} - 
 \min \{a_{i-1} ,e-b_{i-1}\} =1$ \\
 and 
 $p \mid \bigl( \min \{e-a_{i-1} ,b_{i-1} \} +1 \bigr)$.
\end{itemize}

We assume 
$|S_{\underline{a}',\underline{b}',i_1}|+
 |T_{\underline{a}',\underline{b}',i_1}| = e_0 +l$ 
for some $i_1$ and $l \geq 1$. 
Then we have 
$p(e_0 +l) \leq \min \{e-a_{i_1} ' ,b_{i_1} '\}$ 
by (\ref{eq:S+T}). 
By this inequality, we have 
\begin{align*}
 |S_{\underline{a}',\underline{b}',i_1 +1}| 
 &\leq \min \{a_{i_1} ' ,e-b_{i_1} ' \} 
 \leq \max \{a_{i_1} ' ,e-b_{i_1} ' \}\\
 &=e- \min \{e-a_{i_1} ' ,b_{i_1} '\}
 \leq e-p(e_0 +l) = e_0 +e_1 -pl. 
\end{align*}
Combining this with 
$|T_{\underline{a}',\underline{b}',i_1 +1}| \leq 1$, 
we get 
\[
 |S_{\underline{a}',\underline{b}',i_1 +1}|+
 |T_{\underline{a}',\underline{b}',i_1 +1}| \leq 
 e_0 +e_1 -pl+1. 
\] 
This shows $(A_i)$ for all $i$. 

Further, we examine the case 
where equality holds in the above inequality, 
assuming $l=1$. 
In this case, we have that 
$\min \{a_{i_1} ' ,e-b_{i_1} ' \} =e_0 +e_1 -p$, 
$\min \{e-a_{i_1} ',b_{i_1} '\}=p(e_0 +1)$ 
and 
$|T_{\underline{a}',\underline{b}',i_1 +1}|=1$. 
Let $m$ be the unique element of 
$T_{\underline{a}',\underline{b}',i_1 +1}$. 
Then, by the definition of 
$T_{\underline{a}',\underline{b}',i_1 +1}$, 
we have 
\[ 
 \min \biggl\{
 \frac{e-a_{i_1 +1} '}{p}, \frac{b_{i_1 +1} '}{p} 
 \biggr\} - 
 \min \{a_{i_1} ',e-b_{i_1} '\} 
 \geq 
 pm - \min \{e-a_{i_1} ',b_{i_1} '\} 
 \geq p, 
\] 
because 
$\min \{e-a_{i_1} ',b_{i_1} '\}=p(e_0 +1)$ 
and 
$pm - \min \{e-a_{i_1} ',b_{i_1} '\} >0$. 
Combining this with 
$\min \{a_{i_1} ',e-b_{i_1} '\}=e_0 +e_1 -p$, 
we get 
$p(e_0 +e_1) \leq \min 
 \{e-a_{i_1 +1} ',b_{i_1 +1} '\}$. 
By the previous argument, 
we have 
\[
 |S_{\underline{a}',\underline{b}',i_1 +2}|+
 |T_{\underline{a}',\underline{b}',i_1 +2}| \leq 
 e_0 -(p-1)e_1 +1. 
\] 
Thus we have proved $(B_i)$ for all $i$. 
\end{proof}

We are going to show (2). 
Firstly, we treat (a). 
We note that 
$e_0 +e_1 -pl+1 \leq e_0 -p(l-1) -2$
in the case where $0 \leq e_1 \leq p-3$, 
and that 
$e_0 +e_1 -pl+1 \leq e_0 -p(l-1) -1$ and 
$e_0 -(p-1)e_1 +1 \leq e_0 -1$ 
in the case where $e_1 = p-2$. 
Then $(A_i)$ and $(B_i)$ for all $i$ 
implies that 
 $\sum_{1\leq i \leq n} 
 \bigl( |S_{\underline{a}',\underline{b}',i}|+
 |T_{\underline{a}',\underline{b}',i}| \bigr) 
 \leq ne_0$. 
It further implies that 
\[
 d' _{\underline{a},\underline{b}} \leq 
 \sum_{1\leq i \leq n} 
 \bigl( |S_{\underline{a},\underline{b},i}|+
 |T_{\underline{a},\underline{b},i}| 
 \bigr) \leq ne_0 
\] 
for all 
$(\underline{a},\underline{b}) \in I$, and 
that $d' _{\underline{a},\underline{b}} = ne_0$ 
only if 
$|S_{\underline{a},\underline{b},i}|+
 |T_{\underline{a},\underline{b},i}| = e_0$
for all $i$. 
To prove $d_{\underline{a},\underline{b}} \leq ne_0$, 
it suffice to show that 
$d' _{\underline{a},\underline{b}} = ne_0$ implies 
$S_{\underline{a},\underline{b}} =\emptyset$, 
because 
$|S_{\underline{a},\underline{b}}| \leq 1$ 
for all $(\underline{a},\underline{b}) \in I$. 

We assume that 
$d' _{\underline{a},\underline{b}} = ne_0$ and 
$S_{\underline{a},\underline{b}} \neq \emptyset$. 
By the maximality of 
$\sum_{1\leq i \leq n} 
 \bigl( |S_{\underline{a},\underline{b},i}|+
 |T_{\underline{a},\underline{b},i}| \bigr)$, 
we have $|T_{\underline{a},\underline{b},i}| \leq 1$ 
for all $i$. 
Let 
$(v_{0,i})_{1\leq i \leq n}$ be 
the unique element of 
$S_{\underline{a},\underline{b}}$, 
and we put $r_{0,i} =-v_u (v_{0,i})$. 
Then we have 
\[
 a_i -r_{0,i+1} =b_i -pr_{0,i} < \max\{0,a_i +b_i -e\} 
\]
for all $i$, by the definition of 
$S_{\underline{a},\underline{b}}$. 
By (\ref{eq:S+T}) and 
$e_0 -1 \leq |S_{\underline{a},\underline{b},i}|$ 
for all $i$, 
we have 
\[
 e_0 -1 \leq a_i \leq e_0 + e_1 ,\ 
 pe_0 \leq b_i \leq pe_0 +e_1 +1 
\]
for all $i$. 
Take an index $i_2$ such that 
$r_{0,i_2}$ is the maximum. 
Then we have 
\begin{align*}
 (p-1)r_{0,i_2} \leq pr_{0,i_2} -r_{0,i_2 +1} 
 &=b_{i_2} -a_{i_2} \leq 
 (pe_0 +e_1 +1) -(e_0 -1)\\
 &=(p-1)e_0 +e_1 +2 \leq 
 (p-1)e_0 +p. 
\end{align*}
So we get $r_{0,i} \leq e_0 +1$ 
for all $i$. 

If $a_i + b_i -e \leq 0$, 
we have $r_{0,i} \geq e_0 +1$ 
by $b_i -pr_{0,i} <0$ and 
$pe_0 \leq b_i$. 
If $a_i + b_i -e > 0$, 
we have $r_{0,i} \geq e_0 +1$ 
by $b_i -pr_{0,i} < a_i +b_i -e$ and 
$a_i \leq e_0 +e_1$. 
So we have  
$r_{0,i} = e_0 +1$ 
for all $i$. 

By $a_i -r_{0,i+1} =b_i -pr_{0,i}$, 
we have 
$(p-1)(e_0 +1)=b_i -a_i$ for all $i$. 
By the range of $a_i$ and $b_i$, 
we have the following two possibilities for each $i$: 
\[
 (a_i ,b_i )= 
 (e_0 -1,pe_0 +p-2) 
 \textrm{ or }
 (e_0 ,pe_0 +p-1).
\]
In both cases, we have 
$|S_{\underline{a},\underline{b},i+1}| =e_0 -1$.

Now we must have equality in (\ref{eq:S+T}). 
So we must have 
$p \mid ( \min \{ e-a_{i-1} ,b_{i-1} \}+1)$, 
noting that 
$|T_{\underline{a},\underline{b},i}|=1$. 
This contradicts the possibilities of 
$a_{i-1} ,b_{i-1}$. 
Thus we have proved 
$d_{V_{\F}} \leq ne_0$. 
 
For $\underline{a} =(e_0)_{1 \leq i \leq n}$ and
$\underline{b} =(pe_0)_{1 \leq i \leq n}$, 
we have 
$d_{\underline{a},\underline{b}} \geq 
 \sum_{1 \leq i \leq n} |S_{\underline{a},\underline{b},i}| 
 =ne_0$. 
This shows that 
$d_{V_{\F}} =ne_0$, 
if 
\[
 M_{\F} \sim
 \Biggl( 
 \begin{pmatrix}
  u^{e_0} & 0 \\ 0 & u^{pe_0}
 \end{pmatrix}
 \Biggr)_i .
\] 

Secondly, we treat (b). 
In this case, 
we note that 
$e_0 +e_1 -pl+1 = e_0 -p(l-1)$ and 
$e_0 -(p-1)e_1 +1 \leq e_0 -3$. 
Then $(A_i)$ and $(B_i)$ for all $i$ 
implies $d' _{\underline{a},\underline{b}} \leq ne_0$, 
and further implies 
$d_{\underline{a},\underline{b}} \leq ne_0 +1$, 
because $|S_{\underline{a},\underline{b}}| \leq 1$. 
Thus we have proved 
$d_{V_{\F}} \leq ne_0 +1$. 

For $\underline{a} =(e_0)_{1 \leq i \leq n}$ and
$\underline{b} =(pe_0 +p-1)_{1 \leq i \leq n}$, 
we have 
$d_{\underline{a},\underline{b}} \geq 
 \sum_{1 \leq i \leq n} |S_{\underline{a},\underline{b},i}| 
 + |S_{\underline{a},\underline{b}}| 
 =ne_0 +1$, 
because 
$(u^{-(e_0 +1)})_{1 \leq i \leq n} \in 
 S_{\underline{a},\underline{b}}$. 
This shows that 
$d_{V_{\F}} =ne_0 +1$, 
if 
\[
 M_{\F} \sim
 \Biggl( 
 \begin{pmatrix}
  u^{e_0} & 0 \\ 0 & u^{pe_0 +p-1}
 \end{pmatrix}
 \Biggr)_i . 
\]

At last, we treat (c).  
In this case, 
we note that 
$e_0 +e_1 -pl+1 = e_0 -p(l-1)+1$ and 
$e_0 -(p-1)e_1 +1 \leq e_0 -5$. 
Then $(A_i)$ and $(B_i)$ for all $i$ 
implies 
$d' _{\underline{a},\underline{b}} \leq ne_0 +[n/2]$, 
and that 
$d' _{\underline{a},\underline{b}} = ne_0 +[n/2]$ 
only if 
$e_0 \leq 
 |S_{\underline{a},\underline{b},i}|+
 |T_{\underline{a},\underline{b},i}| \leq e_0 +1$ 
for all $i$. 

If $n=1$, then 
$d' _{\underline{a},\underline{b}} \leq e_0$ 
implies 
$d_{\underline{a},\underline{b}} \leq e_0 +1$, 
and the given example for 
$d_{V_{\F}} =e_0 +1$ 
is the same as in (b). 
So we may assume $n \geq 2$ in the following. 

To prove 
$d_{\underline{a},\underline{b}} \leq ne_0 +[n/2]$, 
it suffices to show that 
$d' _{\underline{a},\underline{b}} = ne_0 +[n/2]$ implies 
$S_{\underline{a},\underline{b}} =\emptyset$, 
because 
$|S_{\underline{a},\underline{b}}| \leq 1$ 
for all $(\underline{a},\underline{b}) \in I$. 

We assume that 
$d' _{\underline{a},\underline{b}} = ne_0 +[n/2]$ and 
$S_{\underline{a},\underline{b}} \neq \emptyset$. 
By the maximality of 
$\sum_{1\leq i \leq n} 
 \bigl( |S_{\underline{a},\underline{b},i}|+
 |T_{\underline{a},\underline{b},i}| \bigr)$, 
we have $|T_{\underline{a},\underline{b},i}| \leq 1$ 
for all $i$. 
Let 
$(v_{1,i})_{1\leq i \leq n}$ be 
the unique element of 
$S_{\underline{a},\underline{b}}$, 
and we put $r_{1,i} =-v_u (v_{1,i})$. 
Then we have 
\[
 a_i -r_{1,i+1} =b_i -pr_{1,i} < \max\{0,a_i +b_i -e\} 
\]
for all $i$ by the definition of 
$S_{\underline{a},\underline{b}}$. 
By (\ref{eq:S+T}) and 
$e_0 -1 \leq |S_{\underline{a},\underline{b},i}|$, 
we have 
\[
 e_0 -1 \leq a_i \leq e_0 +p,\ 
 pe_0 \leq b_i \leq pe_0 +p +1
\]
for all $i$. 
Take an index $i_3$ such that 
$r_{1,i_3}$ is the maximum. 
Then we have 
\begin{align*}
 (p-1)r_{1,i_3} &\leq pr_{1,i_3} -r_{1,i_3 +1} 
 =b_{i_3} -a_{i_3} \\
 &\leq 
 (pe_0 +p +1) -(e_0 -1)
 =(p-1)e_0 +p+2. 
\end{align*}
So we get $r_{1,i} \leq e_0 +2$ 
for all $i$.

If $a_i + b_i -e \leq 0$, 
we have $r_{1,i} \geq e_0 +1$ 
by $b_i -pr_{1,i} <0$ and 
$pe_0 \leq b_i$. 
If $a_i + b_i -e > 0$, 
we have $r_{1,i} \geq e_0 +1$ 
by $b_i -pr_{1,i} < a_i +b_i -e$ and 
$a_i \leq e_0 +p$. 
So we have  
$e_0 +1 \leq r_{1,i} \leq e_0 +2$ 
for all $i$. 

By $n \geq 2$, there is an index $i_4$ 
such that 
$|S_{\underline{a},\underline{b},i_4}| + 
 |T_{\underline{a},\underline{b},i_4}|=e_0 +1$. 
Then we have 
$e_0 +1 \leq \min \bigl\{
 (e-a_{i_4} )/p,b_{i_4} /p \bigr\}$ 
by (\ref{eq:S+T}). 
We are going to prove that 
if 
$e_0 +1 \leq 
 \min \bigl\{ (e-a_i )/p,b_i /p \bigr\}$, 
then 
$|S_{\underline{a},\underline{b},i+1}| + 
 |T_{\underline{a},\underline{b},i+1}| =e_0$ 
and 
$e_0 +1 \leq \min \bigl\{
 (e-a_{i+1} )/p,b_{i+1} /p \bigr\}$. 
If we have proved this claim, 
we have a contradiction 
by considering $i_4$. 

We assume that 
$e_0 +1 \leq \min \bigl\{
 (e-a_i )/p,b_i /p \bigr\}$. 
Then we have 
$e_0 -1 \leq a_i \leq e_0$, 
$pe_0 +p \leq b_i \leq pe_0 +p+1$ and 
$e_0 -1 \leq |S_{\underline{a},\underline{b},i+1}| \leq e_0$. 
If 
$|S_{\underline{a},\underline{b},i+1}|=e_0$, 
we have $a_i =e_0$ and $b_i =pe_0 +p$. 
However, this contradicts 
$pr_i -r_{i+1} =b_i -a_i$, 
because $pr_i -r_{i+1} \neq (p-1)e_0 +p$ 
by 
$e_0 +1 \leq r_i ,r_{i+1} \leq e_0 +2$. 
So we have 
$|S_{\underline{a},\underline{b},i+1}|=e_0 -1$ 
and 
$|T_{\underline{a},\underline{b},i+1}|=1$. 
Let $m$ be the unique element of 
$T_{\underline{a},\underline{b},i+1}$. 
By the definition of 
$T_{\underline{a},\underline{b},i+1}$, 
we have 
\[
 \min \biggl\{ 
 \frac{e-a_{i+1}}{p},\frac{b_{i+1}}{p} 
 \biggr\} - 
 \min \{a_i ,e-b_i \} \geq 
 pm - 
 \min \{e-a_i ,b_i \}
 \geq p-1 \geq 2, 
\]
because 
$pe_0 +p \leq \min \{e-a_i ,b_i \} 
 \leq pe_0 +p+1$ 
and 
$pm- \min \{e-a_i ,b_i \} >0$. 
This shows 
$e_0 +1 \leq \min \bigl\{
 (e-a_{i+1} )/p,b_{i+1} /p \bigr\}$. 
Thus we have proved that 
$d_{V_{\F}} \leq ne_0 +[n/2]$. 
 
For $\underline{a} =(e_{0,i})_{1 \leq i \leq n}$ and
$\underline{b} =\bigl( p(2e_0 +1 -e_{0,i})\bigr)_{1 \leq i \leq n}$, 
we have 
\[
 d_{\underline{a},\underline{b}} \geq 
 \sum_{1 \leq i \leq n} |S_{\underline{a},\underline{b},i}|  
 =ne_0 +[n/2],
\] 
where $e_{0,i}$ is defined in the statement of 
Proposition \ref{reducible}(2)(c). 
This shows that 
$d_{V_{\F}} =ne_0 +[n/2]$, 
if 
\[ 
 M_{\F} \sim
 \Biggl( 
 \begin{pmatrix}
  u^{e_{0,i}} & 0 \\ 0 & u^{p(2e_0 +1 -e_{0,i} )} 
 \end{pmatrix}
 \Biggr)_i .
\]
\end{proof}

\section{The case where $V_{\F}$ is absolutely irreducible}

In this section, 
we give the maximum of 
the dimensions of the moduli spaces 
in the case where 
$V_{\F}$ is absolutely irreducible. 
In the proof of the following Proposition, 
three Lemmas appear. 

\begin{prop}\label{irreducible}
We assume $V_{\F}$ is absolutely irreducible, 
and write $e=(p+1)e_0 +e_1$ for $e_0 \in \mathbb{Z}$ 
and $0 \leq e_1 \leq p$. 
Then the followings are true. 
\begin{enumerate}
\item 
There are $m_i \in \mathbb{Z}$ 
for $0 \leq i \leq d_{V_{\F}}$ 
such that $m_{d_{V_{\F}}} >0$ and 
\[
 |\mathscr{GR}_{V_{\F},0} (\F')| =
 \sum_{i=0} ^{d_{V_{\F}}} 
 m_i |\F'|^i 
\]
for all sufficiently large 
extensions $\F'$ of $\F$. 
\item 
\begin{enumerate}
\item
In the case $e_1 =0$, 
we have $d_{V_{\F}} \leq ne_0 -1$. 
In this case, if 
\[
 \hspace*{4em} 
 M_{\F} \sim
 \Biggl( 
 \begin{pmatrix}
  0 & 1 \\ u^{(p+1)e_0 -1} & 0 
 \end{pmatrix}, 
 \begin{pmatrix}
  u^{e_0} & 0 \\ 0 & u^{pe_0}
 \end{pmatrix}, \ldots , 
 \begin{pmatrix}
  u^{e_0} & 0 \\ 0 & u^{pe_0}
 \end{pmatrix}
 \Biggr) , 
\]
then $d_{V_{\F}} = ne_0 -1$. 
\item 
In the case $1 \leq e_1 \leq p-1$, 
we have $d_{V_{\F}} \leq ne_0$. 
In this case, if 
\[
 \hspace*{4em} 
 M_{\F} \sim
 \Biggl( 
 \begin{pmatrix}
  0 & 1 \\ u^{(p+1)e_0 +1} & 0 
 \end{pmatrix}, 
 \begin{pmatrix}
  u^{e_0} & 0 \\ 0 & u^{pe_0}
 \end{pmatrix}, \ldots , 
 \begin{pmatrix}
  u^{e_0} & 0 \\ 0 & u^{pe_0}
 \end{pmatrix}
 \Biggr) , 
\]
we have $d_{V_{\F}} = ne_0$. 
\item 
In the case $e_1 = p$, 
we have 
$d_{V_{\F}} \leq ne_0 +[n/2]$. 
In this case, if 
\[
 \hspace*{2.2em}
 M_{\F} \sim
 \Biggl( 
 \begin{pmatrix}
  0 & 1 \\ u^{(p+1)e_0 +1} & 0 
 \end{pmatrix},  
 \begin{pmatrix}
  u^{2e_0 +1 -e_{0,i} } & 0 \\ 0 & u^{pe_{0,i}} 
 \end{pmatrix}_{2 \leq i \leq n} 
 \Biggr) , 
\] 
then $d_{V_{\F}} = ne_0 +[n/2]$. 
Here, $e_{0,i} =e_0$ 
if $i$ is odd, 
and $e_{0,i} =e_0 +1$ 
if $i$ is even. 
\end{enumerate}
\end{enumerate}
\end{prop}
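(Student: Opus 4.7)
The plan is to mimic the framework of Proposition \ref{reducible} but with modifications dictated by Lemma \ref{stracture}. First I extend $\F$ to contain $\F_{q^2}$ and fix the normal form of $M_{\F}$ from Lemma \ref{stracture}, in which the transition at $i=1$ is anti-diagonal $\bigl(\begin{smallmatrix} 0 & \alpha_1 \\ \alpha_1 u^m & 0 \end{smallmatrix}\bigr)$ for some $m$ with $(q+1) \nmid m$, while the transitions at $i \geq 2$ are scalar. I then fix a base lattice $\mathfrak{M}_{0,\F}$ corresponding to a chosen point of $\mathscr{GR}_{V_\F,0}(\F)$ and parametrize arbitrary lattices by the Iwasawa decomposition as in Proposition \ref{reducible}, decomposing $\mathscr{GR}_{V_\F,0}(\F')$ as a disjoint union of strata $\mathscr{GR}_{V_\F,0,\underline{a},\underline{b}}(\F')$ indexed by the exponents $(\underline{a},\underline{b}) \in I$ produced at each position.

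The novelty lies at position $i=1$. When a modification $\bigl(\begin{smallmatrix} 1 & v_i \\ 0 & 1 \end{smallmatrix}\bigr)_i$ is conjugated through an anti-diagonal transition, it does not merely translate an off-diagonal entry: after row and column reduction, the resulting matrix at $i=1$ retains two potentially free scalar entries on the anti-diagonal whose non-vanishing is a Zariski-open condition. This splits each stratum $\mathscr{GR}_{V_\F,0,\underline{a},\underline{b}}$ into pieces of the form $\mathbb{A}^d_\F$, $\mathbb{A}^{d-1}_\F \times \mathbb{G}_m$, and $\mathbb{A}^{d-2}_\F \times \mathbb{G}_m^2$ at the level of $\F'$-points, as previewed in the Introduction. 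Counting these pieces via inclusion-exclusion over the vanishing loci yields the required zeta function shape and proves part (1).

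For part (2), I define the combinatorial sets $S_{\underline{a},\underline{b},i}$, $S_{\underline{a},\underline{b},i,j}$, and $S_{\underline{a},\underline{b}}$ exactly as in the reducible case for $i \geq 2$, and introduce modified analogs at $i=1$ that reflect the anti-diagonal contribution and the constraint $a_1 + b_1 = m + \text{(valuation shift)}$ from the irreducibility hypothesis. The dimension $d_{\underline{a},\underline{b}}$ decomposes as $\sum_i |S_{\underline{a},\underline{b},i}| + \sum_{i,j} |S_{\underline{a},\underline{b},i,j}| + |S_{\underline{a},\underline{b}}|$ by the analog of Lemma \ref{basis}. I then establish analogs of Lemmas \ref{boundT} and \ref{AB} using the same maximality-of-replacement argument: if $|T_{\underline{a}',\underline{b}',i_0}| \geq 2$ then modifying $a'_{i_0-1}, b'_{i_0-1}$ by $\pm p$ strictly increases the sum, and the transition inequalities $(A_i)$, $(B_i)$ on consecutive indices persist, modulo a boundary shift of $-1$ at the anti-diagonal position $i=1$.

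The three subcases (a), (b), (c) then follow from the same arithmetic on $e_1 \in \{0, 1 \leq e_1 \leq p-1, p\}$ as in Proposition \ref{reducible}, with the boundary shift at $i=1$ explaining why the bound $ne_0 - 1$ appears in case (a) and why the anti-diagonal data does not contribute an extra $+1$ in case (b) beyond what the structure already provides. Attainment is verified by plugging the explicit $M_{\F}$ in each case and directly computing $\sum_{1 \leq i \leq n} |S_{\underline{a},\underline{b},i}|$ for the obvious choice of $(\underline{a},\underline{b})$. The main obstacle I expect is the careful bookkeeping at position $i=1$: one must re-derive the constraints on $r_i = -v_u(v_i)$ with the anti-diagonal transition in play, and verify that the argument ruling out $S_{\underline{a},\underline{b}} \neq \emptyset$ when the maximum is achieved still goes through despite the asymmetry; once this boundary analysis is set up, the rest of the combinatorial bound mirrors Section 2 closely.
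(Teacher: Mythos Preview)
Your outline captures the broad shape but misses the central technical difficulty at position $i=1$. After conjugating through the anti-diagonal matrix, the Kisin condition at $i=1$ reads
\[
v_u\bigl(u^{a_1}-\phi(v_1)\,v_2\,u^{b_1}\bigr)\ge \max\{0,a_1+b_1-e\},
\]
which is \emph{quadratic} in $(v_1,v_2)$. Hence the admissible $(v_i)$ on a fixed $(\underline a,\underline b)$-stratum are not cut out by linear equations, and there is no vector space $N_{\underline a,\underline b,\F'}$ whose dimension you can read off from sets $S_{\underline a,\underline b,i}$ as in Section~2. The paper linearizes by a \emph{further} stratification: one fixes $R_1=-v_u(v_1)$ and $R_2=-v_u(v_2)$ and works on strata $\mathscr{GR}_{V_\F,0,\underline a,\underline b,R_1,R_2}$; the $\mathbb G_m$ and $\mathbb G_m^2$ factors arise precisely from the open conditions $r_1=R_1$, $r_2=R_2$ on these refined strata, not from ``two free anti-diagonal entries.'' Moreover the index set is not $I$: the anti-diagonal forces only $-e/(p-1)\le a_1\le e$ and $0\le b_1\le pe/(p-1)$, and the case $\max\{-a_1,b_1-e\}>0$ requires a separate, more intricate analysis in which one must solve for coefficients $\gamma_{1,i},\gamma_{2,i}$ of prescribed monomials in $v_1,v_2$ and track an auxiliary set $M_{\underline a,\underline b,R_1,R_2}$ that contributes to the dimension.

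Your description of how irreducibility enters is also off. It is not used through a constraint of the form ``$a_1+b_1=m+(\text{shift})$'' but through the fact $(\diamondsuit)$ in the paper: no tuple $(r_i')\in\mathbb Z^n$ can satisfy $a_1=b_1-pr_1'-r_2'$ together with $a_i-r_{i+1}'=b_i-pr_i'$ for $2\le i\le n$, since such a tuple would triangularize $M_{\F'}$ and contradict absolute irreducibility. This fact does three jobs that your plan leaves unaccounted for: it bounds $r_2\le e/(p-1)$ (hence finiteness of strata), it rules out the ``cyclic'' configuration (so there is no set $S_{\underline a,\underline b}$ in the irreducible analysis, contrary to your decomposition $d_{\underline a,\underline b}=\sum|S_i|+\sum|S_{i,j}|+|S_{\underline a,\underline b}|$), and it eliminates the would-be maximal configuration in case~(a), which is exactly what pushes the bound down to $ne_0-1$ when $e_1=0$. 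Without $(\diamondsuit)$ and the $(R_1,R_2)$-stratification, the argument does not close.
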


\begin{proof}
Extending the field $\F$, 
we may assume that  
\[
 M_{\F} \sim 
\Biggl(
\begin{pmatrix}
 0 & \alpha _1 \\ \alpha _1 u^m & 0
\end{pmatrix}
,
\begin{pmatrix}
 \alpha _2 & 0 \\ 0 & \alpha _2
\end{pmatrix}
,
\ldots
,
\begin{pmatrix}
 \alpha _n & 0 \\ 0 & \alpha _n
\end{pmatrix}
\Biggr)
\]
for some $\alpha _i \in \F^{\times}$ and a positive integer $m$ 
such that $(q+1) \nmid m$, 
by Lemma \ref{stracture}. 
Let $\mathfrak{M} _{0,\F}$ be the lattice of 
$M_{\F}$ generated by the basis giving 
the above matrix expression. 

For any finite extension $\F'$ of $\F$, 
we put 
$\mathfrak{M} _{0,\F'} =
\mathfrak{M} _{0,\F} \otimes _{\F} \F'$ 
and 
$M_{\F'} =M_{\F} \otimes _{\F} \F'$. 
By the Iwasawa decomposition, 
any sublattice of $M_{\F'}$ can be written as 
$\Biggl(
 \begin{pmatrix}
  u^{s_i} & v_i ' \\ 0 & u^{t_i} 
 \end{pmatrix} 
 \Biggr)_i \cdot 
 \mathfrak{M} _{0,\F'}$
for $s_i ,t_i \in \mathbb{Z}$ and 
$v_i ' \in \F'((u))$. 

We put 
\begin{align*}
 \mathscr{GR}_{V_{\F},0,\underline{a},\underline{b}}(\F') &=
 \Biggl\{
 \Biggl( 
 \begin{pmatrix}
  u^{s_i} & v_i ' \\ 0 & u^{t_i} 
 \end{pmatrix} 
 \Biggr)_i \cdot 
 \mathfrak{M} _{0,\F'}
 \in
 \mathscr{GR}_{V_{\F},0}(\F')
 \Biggm| 
 s_i ,t_i \in \mathbb{Z}, \ 
 v_i ' \in \F'((u)), \\
 & \hspace*{7em}
 ps_1 -t_2 =a_1 ,\ m+pt_1 -s_2 =b_1, \\
 & \hspace*{9em}
 ps_j -s_{j+1} =a_j ,\ 
 pt_j -t_{j+1} =b_j 
 \textrm{ for } 2 \leq j \leq n 
 \Biggr\}
\end{align*}
for 
$(\underline{a},\underline{b})=
 \bigl( (a_i)_{1 \leq i \leq n} ,
 (b_i)_{1 \leq i \leq n}\bigr) 
 \in \mathbb{Z}^n \times \mathbb{Z}^n$. 
Then we have 
\[
 \mathscr{GR}_{V_{\F},0}(\F') =
 \bigcup_{(\underline{a},\underline{b})
 \in \mathbb{Z}^n \times \mathbb{Z}^n} 
 \mathscr{GR}_{V_{\F},0,\underline{a},\underline{b}}(\F') 
\]
and this is a disjoint union by Lemma \ref{equiv}.
Later, in Lemma \ref{range}, we will show that 
there are only finitely 
many $(\underline{a},\underline{b})$ 
such that 
$\mathscr{GR}_{V_{\F},0,\underline{a},\underline{b}}(\F')
 \neq \emptyset$. 

We take 
\[
 \Biggl(
 \begin{pmatrix}
  u^{s_i } & v_i ' \\ 0 & u^{t_i } 
 \end{pmatrix}
 \Biggr)_i \cdot 
 \mathfrak{M} _{0,\F'} 
 \in \mathscr{GR}_{V_{\F},0,\underline{a},\underline{b}}(\F'),
\] 
and put 
\[
 \mathfrak{M} _{\underline{a},\underline{b},\F'} = 
 \Biggl(
 \begin{pmatrix}
  u^{s_i } & 0 \\ 0 & u^{t_i } 
 \end{pmatrix}
 \Biggr)_i \cdot 
 \mathfrak{M} _{0,\F'}.
\] 
Then we have 
\[
 \mathfrak{M} _{\underline{a},\underline{b},\F'} \sim 
 \Biggl(
 \alpha_1
 \begin{pmatrix}
 0 & u^{a_1} \\ u^{b_1} & 0 
 \end{pmatrix}, 
 \alpha_2 
 \begin{pmatrix}
 u^{a_2} & 0 \\ 0 & u^{b_2} 
 \end{pmatrix}, \ldots , 
 \alpha_n
 \begin{pmatrix}
 u^{a_n} & 0 \\ 0 & u^{b_n} 
 \end{pmatrix}  
 \Biggr)
\] 
with respect to the basis induced from 
$\mathfrak{M} _{0,\F'}$.

Now, any 
$\mathfrak{M} _{\F'} \in 
 \mathscr{GR}_{V_{\F},0,\underline{a},\underline{b}}(\F')$ 
can be written as 
$\Biggl(
 \begin{pmatrix}
 1 & v_i \\ 0 & 1
 \end{pmatrix} 
 \Biggr)_i \cdot
 \mathfrak{M} _{\underline{a},\underline{b},\F'}$ 
for some $(v_i )_{1 \leq i \leq n} \in \F'((u))^n$, 
and we put $r_i =-v_u (v_i)$. 
We may assume $r_i \geq 0$, 
replacing 
$v_i$ so that $v_i \notin u\F'[[u]]$ 
without changing 
the $(k[[u]] \otimes _{\F_p} \F')$-module 
$\Biggl(
 \begin{pmatrix}
 1 & v_i \\ 0 & 1
 \end{pmatrix} 
 \Biggr)_i \cdot
 \mathfrak{M} _{\underline{a},\underline{b},\F'}$ 
by Lemma \ref{equiv}. 
Then we have 
\[ 
 \mathfrak{M}_{\F'} \sim 
 \Biggl( 
 \alpha_1
 \begin{pmatrix}
 \phi (v_1 )u^{b_1} & u^{a_1} -\phi(v_1 )v_2 u^{b_1} \\ 
 u^{b_1} & -v_2 u^{b_1} 
 \end{pmatrix}, 
 \alpha_i
 \begin{pmatrix}
 u^{a_i} & \phi(v_i )u^{b_i} -v_{i+1} u^{a_i} \\ 
 0 & u^{b_i}
 \end{pmatrix}_{2 \leq i \leq n} 
 \Biggr) 
\]
with respect to the induced basis, 
and 
\begin{align*}
 \begin{pmatrix}
 \phi (v_1 )u^{b_1} & u^{a_1} -\phi(v_1 )v_2 u^{b_1} \\ 
 u^{b_1} & -v_2 u^{b_1} 
 \end{pmatrix}
 &=
 \begin{pmatrix}
 \phi (v_1 )u^{b_1} & u^{a_1} \\ 
 u^{b_1} & 0 
 \end{pmatrix}
 \begin{pmatrix}
 1 & -v_2 \\ 
 0 & 1 
 \end{pmatrix}\\
 &=
 \begin{pmatrix}
 v_2 ^{-1} u^{a_1} & u^{a_1} -\phi(v_1 )v_2 u^{b_1} \\ 
 0 & -v_2 u^{b_1} 
 \end{pmatrix}
 \begin{pmatrix}
 1 & 0 \\ 
 -v_2 ^{-1} & 1 
 \end{pmatrix}.
\end{align*} 

Then the condition 
$u^e \mathfrak{M}_{\F'} \subset 
(1\otimes \phi ) \bigl( \phi ^* (\mathfrak{M}_{\F'}) \bigr)
\subset \mathfrak{M}_{\F'}$ 
is equivalent to 
\begin{equation}
\begin{split}
 0 \leq a_1 +r_2 \leq e,\ 
 0 \leq &b_1 -r_2 \leq e,\\ 
 &v_u \bigl( 
 u^{a_1} -\phi(v_1 )v_2 u^{b_1} \bigr)
 \geq \max \{ 0,a_1 +b_1 -e \}, 
\end{split} \tag{$C_1 $} \label{eq:C1}
\end{equation}
\begin{equation}
\begin{split}  
 0 \leq a_i \leq e,\ 
 &0 \leq b_i \leq e,\\ 
 v_u \bigl( 
 \phi &(v_i )u^{b_i} -v_{i+1} u^{a_i} 
 \bigr) \geq \max \{ 
 0,a_i +b_i -e \}
 \textrm{ for } 2\leq i \leq n. 
\end{split} \tag{$C_2$} \label{eq:C2}
\end{equation}

We show the following fact: 
\begin{equation}
\begin{split}
 &\textrm{If } 
 \mathscr{GR}_{V_{\F},0,\underline{a},\underline{b}}(\F') 
 \neq \emptyset , 
 \textrm{ there does not exist }
 (r_i ')_{1 \leq i \leq n} \in \mathbb{Z}^n \\ 
 &\textrm{ such that } 
 a_1 =b_1 -pr_1 ' -r_2 ' 
 \textrm{ and } 
 a_i -r_{i+1} ' =b_i -pr_i ' 
 \textrm{ for } 
 2\leq i \leq n. 
\end{split} \tag{$\diamondsuit$}
\end{equation}
We assume that 
there exists 
$(r_i ')_{1 \leq i \leq n} \in \mathbb{Z}^n$ 
satisfying this condition. 
Changing the basis of 
$\mathfrak{M} _{\underline{a},\underline{b},\F'}$ 
by 
$\Biggl(
 \begin{pmatrix}
 1 & u^{-r_i '} \\
 0 & 1 
 \end{pmatrix}
 \Biggr)_i $, 
we get 
\[
 M_{\F'} \sim 
 \Biggl( 
 \alpha_1
 \begin{pmatrix}
 u^{b_1 -pr_1 '} & 0 \\ 
 u^{b_1} & -u^{b_1 -r_2 '} 
 \end{pmatrix}, 
 \alpha_i
 \begin{pmatrix}
 u^{a_i} & 0 \\ 
 0 & u^{b_i}
 \end{pmatrix}_{2 \leq i \leq n} 
 \Biggr). 
\]
This contradicts that $V_{\F}$ 
is absolutely irreducible. 

\begin{lem}\label{range}
If 
$\mathscr{GR}_{V_{\F},0,\underline{a},\underline{b}}(\F')
 \neq \emptyset$, 
then
\[
 -\frac{e}{p-1} \leq a_1 \leq e,\ 
 0 \leq b_1 \leq \frac{pe}{p-1} 
 \textrm{ and }  
 0 \leq a_i ,b_i \leq e 
 \textrm{ for } 2 \leq i \leq n. 
\] 
\end{lem}
\begin{proof}
We take 
$\mathfrak{M} _{\F'} \in 
 \mathscr{GR}_{V_{\F},0,\underline{a},\underline{b}}(\F')$ and 
write it as 
$\Biggl(
 \begin{pmatrix}
 1 & v_i \\ 0 & 1
 \end{pmatrix} 
 \Biggr)_i \cdot
 \mathfrak{M} _{\underline{a},\underline{b},\F'}$ 
for some $(v_i )_{1 \leq i \leq n} \in \F'((u))^n$. 
We put $r_i =-v_u (v_i)$. 
We may assume $r_i \geq 0$ by Lemma \ref{equiv}. 

If $r_2 >e/(p-1)$, 
we have that 
$a_i -r_{i+1} =b_i -pr_i <0$ for $2 \leq i \leq n$ 
and $r_i >e/(p-1)$ for all $i$ 
by the condition (\ref{eq:C2}), 
and that 
$a_1 =b_1 -pr_1 -r_2 <0$ 
by the condition (\ref{eq:C1}). 
This contradicts $(\diamondsuit)$, 
and we have $r_2 \leq e/(p-1)$. 

Then $(C_1)$ and $(C_2)$ shows 
the claim. 
\end{proof}

To examine 
$|\mathscr{GR}_{V_{\F},0,\underline{a},\underline{b}}(\F')|$, 
we consider the case where 
$0 \leq a_1 \leq e$ and $0 \leq b_1 \leq e$, 
and the case where $\max \{-a_1 ,b_1 -e\}>0$. 

First, we treat the case 
where $0 \leq a_1 \leq e$ and $0 \leq b_1 \leq e$. 
In this case, the condition 
$u^e \mathfrak{M}_{\F'} \subset 
(1\otimes \phi ) \bigl( \phi ^* (\mathfrak{M}_{\F'}) \bigr)
\subset \mathfrak{M}_{\F'}$ 
is equivalent to the condition that 
$\max \{ pr_1 +r_2 ,pr_1 ,r_2 \} \leq \min \{e-a_1 ,b_1\}$ 
and (\ref{eq:C2}). 
We put 
\[
 I_{\underline{a},\underline{b}} = 
 \bigl\{ (R_1 ,R_2 ) \in \mathbb{Z} \times \mathbb{Z} 
 \bigm| pR_1 +R_2 \leq \min \{ e-a_1 ,b_1 \} ,\ 
 R_1 ,R_2 \geq 0 
 \bigr\} 
\] 
and 
\begin{align*}
 \mathscr{GR}_{V_{\F},0,\underline{a},\underline{b},R_1 ,R_2}(\F') &= 
 \Biggl\{ \Biggl(
 \begin{pmatrix}
 1 & v_i \\ 0 & 1
 \end{pmatrix} 
 \Biggr)_i \cdot
 \mathfrak{M} _{\underline{a},\underline{b},\F'} 
 \in 
 \mathscr{GR}_{V_{\F},0,\underline{a},\underline{b}}(\F')
 \Biggm| 
 v_i \in \F'((u)), \\ 
 & \hspace*{18em}
 r_1 = R_1 ,\ 
 r_2 = R_2 
 \Biggr\}
\end{align*}
for $(R_1 ,R_2) \in I_{\underline{a},\underline{b}}$. 
Then we have a disjoint union 
\[
 \mathscr{GR}_{V_{\F},0,\underline{a},\underline{b}}(\F') =
 \bigcup_{(R_1 ,R_2) \in I_{\underline{a},\underline{b}}} 
 \mathscr{GR}_{V_{\F},0,\underline{a},\underline{b},R_1 ,R_2}(\F') 
\]
by Lemma \ref{equiv}. 

We fix $(R_1 ,R_2) \in I_{\underline{a},\underline{b}}$. 
Then the condition that $r_1 = R_1$ and 
$r_2 = R_2$ implies 
$\max \{ pr_1 +r_2 ,pr_1 ,r_2 \} \leq \min \{e-a_1 ,b_1 \}$. 
So 
$\Biggl(
 \begin{pmatrix}
 1 & v_i \\ 0 & 1
 \end{pmatrix} 
 \Biggr)_i \cdot
 \mathfrak{M} _{\underline{a},\underline{b},\F'}$ 
gives a point of 
$\mathscr{GR}_{V_{\F},0,\underline{a},\underline{b},R_1 ,R_2}(\F')$ 
if and only if 
\[
 \max \{r_1 ,0\} = R_1 ,\ \max \{r_2 ,0\} = R_2 \textrm{ and (\ref{eq:C2})}. 
\] 
We assume 
$\mathscr{GR}_{V_{\F},0,\underline{a},\underline{b},R_1 ,R_2}(\F') 
 \neq \emptyset$. 
Considering 
$-v_u (v_i)$ for $(v_i )_{1 \leq i \leq n}$ 
that gives a point of 
$\mathscr{GR}_{V_{\F},0,\underline{a},\underline{b},R_1 ,R_2}(\F')$, 
we have the following two cases: 
\begin{enumerate}
 \item[(i)] 
 There are $2 \leq n_2 < n_1 \leq n+1$ and 
 $R_i \in \mathbb{Z}$ for 
 $3 \leq i \leq n_2$ and $n_1 \leq i \leq n$ 
 such that 
 \[
  a_i -R_{i+1} =b_i -pR_i 
  < \max \{0,a_i +b_i -e\}
 \]
 for $2 \leq i \leq n_2 -1$ and $n_1 \leq i \leq n$, 
 and 
 \[
  R_{n_1} \leq \min \{a_{n_1 -1} ,e-b_{n_1 -1} \},\ 
  R_{n_2} \leq \min \biggl\{ 
  \frac{e-a_{n_2}}{p} ,\frac{b_{n_2}}{p} 
  \biggr\}. 
 \]
 \item[(ii)]
 There are $R_i \in \mathbb{Z}$ for 
 $3 \leq i \leq n$ 
 such that 
 \[
  a_i -R_{i+1} =b_i -pR_i 
  < \max \{0,a_i +b_i -e\}
 \]
 for $2 \leq i \leq n$. 
\end{enumerate}
We note that (ii) includes the case $n=1$. 

We define an $\F'$-vector space 
$\widetilde{N}_{\underline{a},\underline{b},R_1 ,R_2,\F'}$ by 
\[
 \widetilde{N}_{\underline{a},\underline{b},R_1 ,R_2,\F'} = 
 \bigl\{
 (v_i)_{1 \leq i \leq n} \in \F'((u))^n \bigm| 
 r_1 \leq R_1 ,\ r_2 \leq R_2 \textrm{ and (\ref{eq:C2})}
 \bigr\}. 
\] 
We note that 
$\widetilde{N}_{\underline{a},\underline{b},R_1 ,R_2,\F'}
 \supset \F'[[u]]^n$. 
We put 
$N_{\underline{a},\underline{b},R_1 ,R_2,\F'} = 
 \widetilde{N}_{\underline{a},\underline{b},R_1 ,R_2,\F'}
 \big/ \F'[[u]]^n$
and 
$d_{\underline{a},\underline{b},R_1 ,R_2} =
 \dim_{\F'} N_{\underline{a},\underline{b},R_1 ,R_2,\F'}$. 
We note that 
$\dim_{\F'} N_{\underline{a},\underline{b},R_1 ,R_2,\F'}$ 
is independent of finite extensions $\F'$ of $\F$. 
We put 
\[
 \widetilde{N}_{\underline{a},\underline{b},R_1 ,R_2,\F'} ^{\circ} =
 \Bigl\{ (v_i )_{1 \leq i \leq n} \in 
 \widetilde{N}_{\underline{a},\underline{b},R_1 ,R_2,\F'} \Bigm| 
 r_1 = R_1 ,\ r_2 = R_2
 \Bigr\}. 
\] 
Let 
$N_{\underline{a},\underline{b},R_1 ,R_2,\F'} ^{\circ}$ 
be the image of 
$\widetilde{N}_{\underline{a},\underline{b},R_1 ,R_2,\F'} ^{\circ}$ 
in $N_{\underline{a},\underline{b},R_1 ,R_2,\F'}$. 
Then we have a bijection 
\[
 N_{\underline{a},\underline{b},R_1 ,R_2,\F'} ^{\circ} \to 
 \mathscr{GR}_{V_{\F},0,\underline{a},\underline{b},R_1 ,R_2}(\F')
\] 
by Lemma \ref{equiv}. 
By choosing a basis of 
$N_{\underline{a},\underline{b},R_1 ,R_2,\F}$ over $\F$, 
we have a morphism 
\[
 f_{\underline{a},\underline{b},R_1 ,R_2} : 
 \mathbb{A} ^{d_{\underline{a},\underline{b},R_1 ,R_2}} _{\F} 
 \to 
 \mathscr{GR}_{V_{\F},0}
\] 
in the case $R_1 =R_2 =0$, 
\[
 f_{\underline{a},\underline{b},R_1 ,R_2} : 
 \mathbb{A} 
 ^{(d_{\underline{a},\underline{b},R_1 ,R_2} -2)} _{\F} 
 \times \mathbb{G} _{m,\F} ^2 \to 
 \mathscr{GR}_{V_{\F},0}
\] 
in the case where 
$R_1 >0$, $R_2 >0$ and (i) holds true, and 
\[
 f_{\underline{a},\underline{b},R_1 ,R_2} : 
 \mathbb{A} ^{(d_{\underline{a},\underline{b},R_1 ,R_2} -1)} _{\F} 
 \times \mathbb{G} _{m, \F} \to 
 \mathscr{GR}_{V_{\F},0}
\] 
in the other case, 
such that 
$f_{\underline{a},\underline{b},R_1 ,R_2}(\F')$ 
is injective and 
the image of 
$f_{\underline{a},\underline{b},R_1 ,R_2}(\F')$ 
is $\mathscr{GR}_{V_{\F},0,\underline{a},\underline{b},R_1 ,R_2}(\F')$. 

\begin{lem}\label{nonnegative}
If $0 \leq a_1 \leq e$ and $0 \leq b_1 \leq e$, 
the followings hold: 
\begin{enumerate}
\item[(a)] 
In the case $e_1 =0$, 
we have $d_{\underline{a},\underline{b},R_1 ,R_2} \leq ne_0 -1$. 
In this case, if 
$a_1 =0$, $b_1 =(p+1)e_0 -1$, $a_i =e_0$ and 
$b_i =pe_0$ for $2 \leq i \leq n$, 
then 
there exists 
$(R_1 ,R_2 ) \in I_{\underline{a},\underline{b}}$ such that 
$d_{\underline{a},\underline{b},R_1 ,R_2} = ne_0 -1$. 
\item[(b)] 
In the case $1 \leq e_1 \leq p-1$, 
we have $d_{\underline{a},\underline{b},R_1 ,R_2} \leq ne_0$. 
In this case, if 
$a_1 =0$, $b_1 =(p+1)e_0 +1$, 
$a_i =e_0$ and $b_i =pe_0$ for 
$2 \leq i \leq n$,
then 
there exists 
$(R_1 ,R_2 ) \in I_{\underline{a},\underline{b}}$ such that 
$d_{\underline{a},\underline{b},R_1 ,R_2} = ne_0$. 
\item[(c)] 
In the case $e_1 = p$, 
we have 
$d_{\underline{a},\underline{b},R_1 ,R_2} \leq ne_0 +[n/2]$. 
In this case, if 
$a_1 =0$, $b_1 =(p+1)e_0 +1$, 
$a_i =2e_0 +1 -e_{0,i}$ and 
$b_i =pe_{0,i}$ for 
$2 \leq i \leq n$, 
then 
there exists 
$(R_1 ,R_2 ) \in I_{\underline{a},\underline{b}}$ such that 
$d_{\underline{a},\underline{b},R_1 ,R_2} = ne_0 +[n/2]$. 
Here, $e_{0,i} =e_0$ 
if $i$ is odd, 
and $e_{0,i} =e_0 +1$ 
if $i$ is even. 
\end{enumerate}
\end{lem}
\begin{proof}
First, we treat the case $n=1$. 
In this case, we have 
\[
 R_1 =R_2 \leq \biggl[
 \frac{\min \{e-a_1 ,b_1\} }{p+1}\biggr]
 \leq e_0. 
\]
So we get 
$d_{\underline{a},\underline{b},R_1 ,R_2} \leq e_0$ 
for 
$(\underline{a},\underline{b}) \in 
 \mathbb{Z}^n \times \mathbb{Z}^n$ and 
$(R_1 ,R_2) \in I_{\underline{a},\underline{b}}$ 
such that 
$\mathscr{GR}_{V_{\F},0,\underline{a},\underline{b},R_1 ,R_2}(\F')
 \neq \emptyset$ 
and $0 \leq a_1 ,b_1 \leq e$. 
We have to eliminate the possibility of equality 
in the case $e_1 =0$. 
In this case, if we have 
$d_{\underline{a},\underline{b},R_1 ,R_2} = e_0$, 
then $a_1 =0$ and $b_1 =(p+1)e_0$. 
This contradicts $(\diamondsuit )$. 

We can check that if 
$e_1 =0$, $a_1 =0$, $b_1 =e-1$ and 
$R_1 =R_2 =e_0 -1$, 
then 
$d_{\underline{a},\underline{b},R_1 ,R_2} = e_0 -1$, 
and that if 
$e_1 \neq 0$, $a_1 =0$, $b_1 =(p+1)e_0 +1$ and 
$R_1 =R_2 =e_0$, 
then 
$d_{\underline{a},\underline{b},R_1 ,R_2} = e_0$. 

So we may assume $n \geq 2$. 
We put
\begin{align*}
 S_{\underline{a},\underline{b},R_1 ,R_2 ,1} &=\bigl\{
 (u^{-r_1} ,0,\ldots ,0) \in \F((u))^n 
 \bigm| 1 \leq r_1 \leq 
 \min \{
 R_1 ,a_{n} ,e-b_{n} 
 \} \bigr\},\\
 S_{\underline{a},\underline{b},R_1 ,R_2 ,2} &=\Biggl\{
 (0,u^{-r_2} ,0, \ldots ,0) \in \F((u))^n 
 \Biggm| 1 \leq r_2 \leq 
 \min \biggl\{
 R_2 ,\frac{e-a_2}{p},\frac{b_2}{p} 
 \biggr\} \Biggr\},\\ 
 S_{\underline{a},\underline{b},R_1 ,R_2 ,i} &=\Biggl\{
 (0,\ldots ,0,v_i ,0, \ldots ,0) \in \F((u))^n 
 \Biggm|  v_i =u^{-r_i}, \\
 &\hspace{12em} 1 \leq r_i \leq 
 \min \biggl\{
 a_{i-1} ,e-b_{i-1} ,\frac{e-a_i}{p},\frac{b_i}{p} 
 \biggr\} \Biggr\} \\
\intertext{for $3 \leq i \leq n$, and}
 S_{\underline{a},\underline{b},R_1 ,R_2 ,i,j} 
 &=\Biggl\{
 (0,\ldots ,0,v_i ,v_{i+1}, \ldots ,v_{j+1} ,0,\ldots ,0) \in \F((u))^n 
 \Biggm| v_i =u^{-r_i},\ \\ 
 & 
 r_i \leq \min \{a_{i-1} ,e-b_{i-1} \}
 \textrm{ if } i \neq 2,\ 
 r_2 \leq R_2 \textrm{ if } i=2, \\
 & 
 u^{a_{l}} v_{l+1} = 
 u^{b_{l}} \phi(v_{l}) 
 \textrm{ and }
 -\!v_u (v_{l+1} ) > 
 \min \{a_{l} ,e-b_{l} \} 
 \textrm{ for } 
 i \leq l \leq j, \\
 & 
 -\!v_u (v_{j+1}) \leq 
 \min \biggl\{
 \frac{e-a_{j+1}}{p}, \frac{b_{j+1}}{p} 
 \biggr\} \textrm{ if } j \neq n,\ 
 -v_u (v_1) \leq R_1 \textrm{ if } j=n 
 \Biggr\} 
\end{align*}
for $2 \leq i \leq j \leq n$. 
In the above definitions, $v_i$ is on the $i$-th component. 
Then, as in the proof of Lemma \ref{basis}, 
we can check that 
$\bigcup_{i} S_{\underline{a},\underline{b},R_1 ,R_2 ,i} 
 \cup \bigcup_{i,j} S_{\underline{a},\underline{b},R_1 ,R_2 ,i,j}$ 
is an $\F$-basis of $N_{\underline{a},\underline{b},R_1 ,R_2,\F}$. 
So we have 
$d_{\underline{a},\underline{b},R_1 ,R_2} =
 \sum_i |S_{\underline{a},\underline{b},R_1 ,R_2 ,i}| + 
 \sum_{i,j} |S_{\underline{a},\underline{b},R_1 ,R_2 ,i,j}|$. 

We put 
\begin{align*}
 T_{\underline{a},\underline{b},R_1 ,R_2 ,1} &=\bigl\{
 m \in \mathbb{Z} \bigm| 
 \min \{a_n ,e-b_n \} < pm+a_n -b_n \leq R_1 
 \bigr\}, \\
\intertext{$T_{\underline{a},\underline{b},R_1 ,R_2 ,2} =\emptyset$ and}
 T_{\underline{a},\underline{b},R_1 ,R_2 ,i} &=\Biggl\{
 m \in \mathbb{Z} \Biggm| 
 \min \{a_{i-1} ,e-b_{i-1} \} < pm+a_{i-1} -b_{i-1}\\ 
 & \hspace*{14.25em}
 \leq 
 \min \biggl\{
 \frac{e-a_i }{p}, \frac{b_i }{p} 
 \biggr\} \Biggr\} 
\end{align*}
for $3 \leq i \leq n$. 
We consider the map
\[
 \bigcup_{2 \leq i \leq h-1} 
 S_{\underline{a},\underline{b},R_1 ,R_2 ,i,h-1} 
 \to T_{\underline{a},\underline{b},R_1 ,R_2 ,h} ;\ 
 (v_{i'})_{1 \leq i' \leq n} \mapsto -v_u (v_{h-1}) 
\]
for $3 \leq h \leq n+1$. 
We can easily check that this map is injective. 
So we have 
$\sum_{2 \leq i \leq h-1} |S_{\underline{a},\underline{b},R_1 ,R_2 ,i,h-1}| 
 \leq |T_{\underline{a},\underline{b},R_1 ,R_2 ,h}|$ 
and 
$d_{\underline{a},\underline{b},R_1 ,R_2} \leq 
 \sum_{1 \leq i \leq n} \bigl( 
 |S_{\underline{a},\underline{b},R_1 ,R_2 ,i}| + 
 |T_{\underline{a},\underline{b},R_1 ,R_2 ,i}| \bigr)$. 

We take 
$(\underline{a}',\underline{b}') 
 \in \mathbb{Z}^n \times \mathbb{Z}^n$ 
and $(R_1 ' ,R_2 ') \in I_{\underline{a}',\underline{b}'}$  
such that $0 \leq a_1 ',b_1 ' \leq e$ and 
$\sum_{1 \leq i \leq n} \bigl( 
 |S_{\underline{a}',\underline{b}',R_1 ' ,R_2 ' ,i}| +
 |T_{\underline{a}',\underline{b}',R_1 ' ,R_2 ' ,i}| \bigr)$
is the maximum. 
We can prove that 
$|T_{\underline{a}',\underline{b}',R_1 ' ,R_2 ' ,i}| \leq 1$ 
for all $i$ as in the proof 
of Lemma \ref{boundT}. 

We can also show that 
\begin{enumerate}
 \item[$(A_i)$] 
if 
$|S_{\underline{a}',\underline{b}',R_1 ' ,R_2 ' ,i}| +
 |T_{\underline{a}',\underline{b}',R_1 ' ,R_2 ' ,i}| 
 =e_0 +l$ 
for $l \geq 1$,\\ 
then 
$|S_{\underline{a}',\underline{b}',R_1 ' ,R_2 ' ,i+1}| +
 |T_{\underline{a}',\underline{b}',R_1 ' ,R_2 ' ,i+1}| 
 \leq e_0 +e_1 -pl+1$ 
\end{enumerate}
for $i \neq 1$, and that 
\begin{enumerate}
 \item[$(B_i)$] 
if 
$|S_{\underline{a}',\underline{b}',R_1 ' ,R_2 ' ,i}| +
 |T_{\underline{a}',\underline{b}',R_1 ' ,R_2 ' ,i}| 
 =e_0 +1$\\ 
and 
$|S_{\underline{a}',\underline{b}',R_1 ' ,R_2 ' ,i+1}| +
 |T_{\underline{a}',\underline{b}',R_1 ' ,R_2 ' ,i+1}| 
 = e_0 +e_1 -p+1$,\\ 
then 
$|S_{\underline{a}',\underline{b}',R_1 ' ,R_2 ' ,i+2}| +
 |T_{\underline{a}',\underline{b}',R_1 ' ,R_2 ' ,i+2}| 
 \leq e_0 -(p-1)e_1 +1$ 
\end{enumerate}
for $2 \leq i \leq n-1$ 
as in the proof of 
Lemma \ref{AB}. 
By the same argument, 
we can show that 
\begin{enumerate}
 \item[$(A_1)$] 
if 
$|S_{\underline{a}',\underline{b}',R_1 ' ,R_2 ' ,1}| +
 |T_{\underline{a}',\underline{b}',R_1 ' ,R_2 ' ,1}| 
 =e_0 +l$ 
for $l \geq 1$,\\ 
then 
$|S_{\underline{a}',\underline{b}',R_1 ' ,R_2 ' ,2}| +
 |T_{\underline{a}',\underline{b}',R_1 ' ,R_2 ' ,2}| 
 \leq e_0 +e_1 -pl$, 
\end{enumerate}
and that 
\begin{enumerate}
 \item[$(B_n)$] 
if 
$|S_{\underline{a}',\underline{b}',R_1 ' ,R_2 ' ,n}| +
 |T_{\underline{a}',\underline{b}',R_1 ' ,R_2 ' ,n}| 
 =e_0 +1$\\ 
and 
$|S_{\underline{a}',\underline{b}',R_1 ' ,R_2 ' ,1}| +
 |T_{\underline{a}',\underline{b}',R_1 ' ,R_2 ' ,1}| 
 = e_0 +e_1 -p+1$,\\ 
then 
$|S_{\underline{a}',\underline{b}',R_1 ' ,R_2 ' ,2}| +
 |T_{\underline{a}',\underline{b}',R_1 ' ,R_2 ' ,2}| 
 \leq e_0 -(p-1)e_1 $, 
\end{enumerate}
using the followings: 
\begin{align*}
 &|S_{\underline{a}',\underline{b}',R_1 ' ,R_2 ' ,1}| +
 |T_{\underline{a}',\underline{b}',R_1 ' ,R_2 ' ,1}| 
 \leq R_1,\ 
 pR_1 +R_2 \leq e,\\ 
 &|S_{\underline{a}',\underline{b}',R_1 ' ,R_2 ' ,2}| 
 \leq R_2 
 \textrm{ and } 
 T_{\underline{a}',\underline{b}',R_1 ' ,R_2 ' ,2} =\emptyset .
\end{align*}

Firstly, we treat the case where 
$0 \leq e_1 \leq p-1$, 
that is, (a) or (b). 
We note that 
$e_0 +e_1 -pl+1 \leq e_0 -p(l-1)-1$ 
in the case $0 \leq e_1 \leq p-2$, 
and that 
$e_0 +e_1 -pl+1 =e_0 -p(l-1)$ and 
$e_0 -(p-1)e_1 +1 \leq e_0 -3$ 
in the case $e_1 =p-1$. 
Then $(A_i)$ for all $i$ 
and $(B_i)$ for $i \neq 1$ 
implies 
\begin{align*}
 d _{\underline{a},\underline{b},R_1 ,R_2 } &\leq 
 \sum_{1\leq i \leq n} 
 \bigl( |S_{\underline{a},\underline{b},R_1 ,R_2  ,i}|+
 |T_{\underline{a},\underline{b},R_1 ,R_2  ,i}| \bigr) \\ 
 &\leq
 \sum_{1\leq i \leq n} 
 \bigl( |S_{\underline{a}',\underline{b}',R_1 ' ,R_2 ' ,i}|+
 |T_{\underline{a}',\underline{b}',R_1 ' ,R_2 ' ,i}| \bigr) 
 \leq ne_0 
\end{align*} 
for 
$(\underline{a},\underline{b}) \in 
 \mathbb{Z}^n \times \mathbb{Z}^n$ and 
$(R_1 ,R_2) \in I_{\underline{a},\underline{b}}$ 
such that 
$\mathscr{GR}_{V_{\F},0,\underline{a},\underline{b},R_1 ,R_2}(\F')
 \neq \emptyset$ 
and $0 \leq a_1 ,b_1 \leq e$. 
So we get the desired bound, 
if $1\leq e_1 \leq p-1$. 
In the case $e_1 =0$, 
we have to eliminate the possibility 
of equality. 
In this case, if we have equality, 
we get that 
$\sum_{1\leq i \leq n} 
 \bigl( |S_{\underline{a},\underline{b},R_1 ,R_2  ,i}|+
 |T_{\underline{a},\underline{b},R_1 ,R_2  ,i}| \bigr)$ 
is the maximum and 
$\bigl( |S_{\underline{a},\underline{b},R_1 ,R_2 ,i}|+
 |T_{\underline{a},\underline{b},R_1 ,R_2 ,i}| \bigr) =e_0$ 
for all $i$ by $(A_i)$ for all $i$. 
Then we have 
\[
 R_1 =R_2 =e_0 ,\ 
 e_0 -1 \leq a_i \leq e_0 ,\ 
 pe_0 \leq b_i \leq pe_0 +1 
\textrm{ for } 2 \leq i \leq n 
\]
by the followings: 
\begin{align*}
 &pR_1 +R_2 =e, 
 |S_{\underline{a},\underline{b},R_1 ,R_2 ,1}|+
 |T_{\underline{a},\underline{b},R_1 ,R_2 ,1}| \leq R_1 , 
 |S_{\underline{a},\underline{b},R_1 ,R_2 ,2}| \leq R_2 , \\
 &|S_{\underline{a},\underline{b},R_1 ,R_2 ,i}|+
 |T_{\underline{a},\underline{b},R_1 ,R_2 ,i}| 
 \leq \min \{(e-a_i )/p,b_i /p \} 
 \textrm{ for } 2 \leq i \leq n \\
 &\textrm{and } 
 |S_{\underline{a},\underline{b},R_1 ,R_2 ,i}| \geq e_0 -1 
 \textrm{ for } i\neq 2. 
\end{align*}
Now we have $a_1 =0$ and $b_1 =(p+1)e_0$ 
by $R_1 =R_2 =e_0$. 
We show that 
$|T_{\underline{a},\underline{b},R_1 ,R_2 ,i}| =0$ 
for $3 \leq i \leq n$. 
We assume that 
$|T_{\underline{a},\underline{b},R_1 ,R_2 ,i_0}| =1$ 
for some $i_0 \neq 1,2$, 
and let 
$m$ be the unique element of 
$T_{\underline{a},\underline{b},R_1 ,R_2 ,i_0}$. 
Then, by the definition of 
$T_{\underline{a},\underline{b},R_1 ,R_2 ,i_0}$, 
we have
\begin{align*}
 \min \biggl\{ \frac{e-a_{i_0}}{p}, 
 \frac{b_{i_0}}{p} \biggr\} - 
 \min \{a_{i_0 -1} ,e-b_{i_0 -1} \} 
 &\geq 
 pm -\min \{e-a_{i_0 -1} ,b_{i_0 -1} \}\\ 
 &\geq p-1 \geq 2, 
\end{align*} 
because 
$pe_0 \leq \min \{e-a_{i_0 -1} ,b_{i_0 -1} \} \leq pe_0 +1$ 
and $pm -\min \{e-a_{i_0 -1} ,b_{i_0 -1} \}>0$. 
This contradicts the possibilities of 
$a_{i_0 -1}$, $a_{i_0}$, 
$b_{i_0 -1}$ and $b_{i_0}$. 
The same argument shows that 
$|T_{\underline{a},\underline{b},R_1 ,R_2 ,1}| =0$. 
Now we have 
$|S_{\underline{a},\underline{b},R_1 ,R_2 ,i}| =e_0$ 
for all $i$, and that 
\[
 a_1 =0,\ b_1 =(p+1)e_0 ,\ 
 a_i =e_0 ,\ b_i =pe_0 
 \textrm{ for } 
 2 \leq i \leq n. 
\]
Then we have 
\[
 a_1 =b_1 -pr_1 ' -r_2 ' 
 \textrm{ and } 
 a_i -r_{i+1} ' =b_i -pr_i ' 
 \textrm{ for } 
 2\leq i \leq n 
\]
for 
$(r_i ')_{1 \leq i \leq n}=
 (e_0)_{1 \leq i \leq n}$. 
This contradicts $(\diamondsuit)$.  
So we have 
$d _{\underline{a},\underline{b},R_1 ,R_2 } \leq ne_0 -1$, 
if $e_1 =0$. 

We can check that 
if $e_1 =0$, 
$a_1 =0$, $b_1 =(p+1)e_0 -1$, 
$R_1 =e_0$, $R_2 =e_0 -1$, 
$a_i =e_0$ and $b_i =pe_0$ for 
$2 \leq i \leq n$, 
then 
$d _{\underline{a},\underline{b},R_1 ,R_2 } 
 \geq 
 \sum_{1\leq i \leq n} 
 |S_{\underline{a},\underline{b},R_1 ,R_2  ,i}| =
 ne_0 -1$. 
We can check also that 
if $1 \leq e_1 \leq p-1$, 
$a_1 =0$, $b_1 =(p+1)e_0 +1$, 
$R_1 =e_0$, $R_2 =e_0 +1$, 
$a_i =e_0$ and $b_i =pe_0$ for 
$2 \leq i \leq n$, then 
$d _{\underline{a},\underline{b},R_1 ,R_2 } 
 \geq 
 \sum_{1\leq i \leq n} 
 |S_{\underline{a},\underline{b},R_1 ,R_2  ,i}| =
 ne_0$. 

Secondly, we treat (c). 
In this case, 
we note that 
$e_0 +e_1 -pl+1 = e_0 -p(l-1)+1$ and 
$e_0 -(p-1)e_1 +1 \leq e_0 -5$. 
Then $(A_i)$ for all $i$ 
and $(B_i)$ for $i \neq 1$ 
implies 
\begin{align*} 
 d _{\underline{a},\underline{b},R_1 ,R_2 } &\leq 
 \sum_{1\leq i \leq n} 
 \bigl( |S_{\underline{a},\underline{b},R_1 ,R_2  ,i}|+
 |T_{\underline{a},\underline{b},R_1 ,R_2  ,i}| \bigr) \\
 &\leq 
 \sum_{1\leq i \leq n} 
 \bigl( |S_{\underline{a}',\underline{b}',R_1 ' ,R_2 ' ,i}|+
 |T_{\underline{a}',\underline{b}',R_1 ' ,R_2 ' ,i}| \bigr) 
 \leq ne_0 +
 \biggl[
 \frac{n}{2}
 \biggr]
\end{align*} 
for 
$(\underline{a},\underline{b}) \in 
 \mathbb{Z}^n \times \mathbb{Z}^n$ and 
$(R_1 ,R_2) \in I_{\underline{a},\underline{b}}$ 
such that $0 \leq a_1 ,b_1 \leq e$. 
So we get the desired bound. 

We can check that 
if $e_1 =p$, 
$a_1 =0$, $b_1 =(p+1)e_0 +1$, 
$R_1 =e_0$, $R_2 =e_0 +1$, 
$a_i =2e_0 +1 -e_{0,i}$ and 
$b_i =pe_{0,i}$ for 
$2 \leq i \leq n$, 
then 
$d _{\underline{a},\underline{b},R_1 ,R_2 } 
 \geq 
 \sum_{1\leq i \leq n} 
 |S_{\underline{a},\underline{b},R_1 ,R_2  ,i}| =
 ne_0 +[n/2]$.
\end{proof}

Next, we consider the remaining case, 
that is, the case where 
$\max \{-a_1 ,b_1 -e\} > 0$. 
In this case, 
$v_u \bigl( 
 u^{a_1} -\phi(v_1 )v_2 u^{b_1} \bigr)
 \geq \max \{ 0,a_1 +b_1 -e \}$ 
implies $pr_1 +r_2 =b_1 -a_1$, 
because $a_1 <\max \{ 0,a_1 +b_1 -e \}$. 
So the condition 
$u^e \mathfrak{M}_{\F'} \subset 
(1\otimes \phi ) \bigl( \phi ^* (\mathfrak{M}_{\F'}) \bigr)
\subset \mathfrak{M}_{\F'}$ 
implies 
\[
 pr_1 +r_2 =b_1 -a_1 ,\ 
 \max\{-a_1 ,b_1 -e\} \leq r_2 
 \leq \min \{ e-a_1 ,b_1 \}. 
\]
We note that if $n=1$, then 
$pr_1 +r_2 =b_1 -a_1$ contradicts 
$(\diamondsuit)$ because 
$r_1 =r_2$. 
So we may assume $n \geq 2$. 
We put 
\begin{align*}
 I_{\underline{a},\underline{b}} &= 
 \Bigl\{ (R_1 ,R_2 ) \in \mathbb{Z} \times \mathbb{Z} 
 \Bigm| pR_1 +R_2 =b_1 -a_1 ,\\ 
 & \hspace*{10.1em}
 \max \{ -a_1 ,b_1 -e\} 
 \leq R_2 \leq \min \{ e-a_1 ,b_1 \} 
 \Bigr\} 
\end{align*} 
and 
$m_{\underline{a},\underline{b}} =
 \bigl[\bigr(\max \{-a_1 ,b_1 -e \} -1\bigr) \big/p \bigr]$. 
We note that 
$R_1 \geq m_{\underline{a},\underline{b}} +1>0$ 
and 
$R_2 \geq \max \{ -a_1 ,b_1 -e\} >0$. 
We put 
\begin{align*}
 \mathscr{GR}_{V_{\F},0,\underline{a},\underline{b},R_1 ,R_2}(\F') &= 
 \Biggl\{ \Biggl(
 \begin{pmatrix}
 1 & v_i \\ 0 & 1
 \end{pmatrix} 
 \Biggr)_i \cdot
 \mathfrak{M} _{\underline{a},\underline{b},\F'} 
 \in 
 \mathscr{GR}_{V_{\F},0,\underline{a},\underline{b}}(\F')
 \Biggm| 
 v_i \in \F'((u)), \\ 
 & \hspace*{13em}
 v_u (v_1) = -R_1 ,\ 
 v_u (v_2) = -R_2 
 \Biggr\}
\end{align*}
for $(R_1 ,R_2) \in I_{\underline{a},\underline{b}}$. 
Then we have a disjoint union 
\[
 \mathscr{GR}_{V_{\F},0,\underline{a},\underline{b}}(\F') =
 \bigcup_{(R_1 ,R_2) \in I_{\underline{a},\underline{b}}} 
 \mathscr{GR}_{V_{\F},0,\underline{a},\underline{b},R_1 ,R_2}(\F') 
\] 
by Lemma \ref{equiv}. 
Extending the field $\F$, 
we may assume that 
$\mathscr{GR}_{V_{\F},0,\underline{a},\underline{b},R_1 ,R_2}(\F')
 \neq \emptyset$ 
if and only if 
$\mathscr{GR}_{V_{\F},0,\underline{a},\underline{b},R_1 ,R_2}(\F)
 \neq \emptyset$ 
for each 
$(R_1 ,R_2) \in I_{\underline{a},\underline{b}}$, 
$(\underline{a},\underline{b}) \in 
 \mathbb{Z}^n \times \mathbb{Z}^n$ 
and any finite extension $\F'$ of $\F$. 

We fix $(R_1 ,R_2) \in I_{\underline{a},\underline{b}}$, 
and assume 
$\mathscr{GR}_{V_{\F},0,\underline{a},\underline{b},R_1 ,R_2}(\F)
 \neq \emptyset$. 
If $v_u (v_1 )=-R_1$ and 
$v_u (v_2 )=-R_2$, 
the condition
$v_u \bigl( 
 u^{a_1} -\phi(v_1 )v_2 u^{b_1} \bigr)
 \geq \max \{ 0,a_1 +b_1 -e \}$ 
is equivalent to the following:
\begin{center}
There uniquely exist 
$\gamma_{1,0} ,\gamma_{2,0} \in (\F')^{\times}$ and 
$\gamma_{1,i} ,\gamma_{2,i} \in \F'$ 
for $1 \leq i \leq 
 m_{\underline{a},\underline{b}}$ 
such that  
\begin{align*}
 & -\!v_u \Biggl( v_1 -\sum_{0 \leq i \leq 
 m_{\underline{a},\underline{b}} }\gamma_{1,i} u^{-R_1 +i}
 \Biggr) \leq R_1 -m_{\underline{a},\underline{b}} -1,\\ 
 & -\!v_u \Biggl( v_2 -\sum_{0 \leq i \leq 
 m_{\underline{a},\underline{b}} }\gamma_{2,i} u^{-R_2 +pi}
 \Biggr) \leq R_2 -\max \{-a_1 ,b_1 -e\}, \\ 
 &\ \gamma_{1,0} \gamma_{2,0} =1,\ 
 \sum_{0\leq i \leq l} 
 \gamma_{1,i} \gamma_{2,l-i} =0 
 \textrm{ for } 1 \leq l \leq 
 m_{\underline{a},\underline{b}}. 
\end{align*}
\end{center}
We note that 
$(\gamma_{1,i})_{0 \leq i \leq
 m_{\underline{a},\underline{b}}}$ 
determines 
$(\gamma_{1,i},\gamma_{2,i})_{0 \leq i \leq
 m_{\underline{a},\underline{b}}}$. 

We prove that 
for $0 \leq i \leq m_{\underline{a},\underline{b}}$ 
there uniquely exist 
$2 \leq n_{2,i} < n_{1,i} \leq n+1$, 
$r_{1,i,j} \in \mathbb{Q}$ for 
$n_{1,i} \leq j \leq n+1$ and 
$r_{2,i,j} \in \mathbb{Z}$ for 
$2 \leq j \leq n_{2,i}$ 
such that 
$r_{1,0,j} \in \mathbb{Z}$ for 
$n_{1,0} \leq j \leq n+1$ and 
\begin{align*}
 a_j -r_{1,i,j+1} &=b_j -pr_{1,i,j} <
 \max \{0,a_j +b_j -e\} 
 \textrm{ for } 
 n_{1,i} \leq j \leq n,\\
 r_{1,i,n+1} &=R_1 -i,\ 
 r_{1,i,n_{1,i}} \leq 
 \min \{ a_{n_{1,i} -1},e-b_{n_{1,i} -1} \},\\ 
 a_j -r_{2,i,j+1} &=b_j -pr_{2,i,j} <
 \max \{0,a_j +b_j -e\} 
 \textrm{ for } 
 2 \leq j \leq n_{2,i} -1,\\ 
 r_{2,i,2} &=R_2 -pi,\ 
 r_{2,i,n_{2,i}} \leq 
 \min \biggl\{ \frac{e-a_{n_{2,i}}}{p},\frac{b_{n_{2,i}}}{p} \biggr\}. 
\end{align*} 
Define 
$r_{1,i,j} \in \mathbb{Q}$ for 
$2 \leq j \leq n+1$ and 
$r_{2,i,j} \in \mathbb{Z}$ for 
$2 \leq j \leq n+1$ 
such that 
\begin{align*}
r_{1,i,n+1} &=R_1 -i,\ 
a_j -r_{1,i,j+1} =b_j -pr_{1,i,j} 
\textrm{ for } 
2 \leq j \leq n,\\ 
r_{2,i,2} &=R_2 -pi,\ 
a_j -r_{2,i,j+1} =b_j -pr_{2,i,j} 
\textrm{ for } 
2 \leq j \leq n. 
\end{align*}
We put
\begin{align*}
 n_{1,i} &=\max \Bigl\{
 \bigl\{ 3 \leq j \leq n+1 \bigm| 
 r_{1,i,j} \leq \min \{
 a_{j-1} ,e-b_{j-1} \}
 \bigr\} \cup \{2\}
 \Bigr\}, \\ 
 n_{2,i} &=\min \Biggl\{
 \biggl\{ 2 \leq j \leq n \biggm| 
 r_{2,i,j} \leq \min \Bigl\{
 \frac{e-a_j }{p} ,\frac{b_j }{p} 
 \Bigr\}
 \biggr\} \cup \{n+1\}
 \Biggr\}. 
\end{align*}
We consider $(v_i)_{1 \leq i \leq n}$ that gives 
a point of 
$\mathscr{GR}_{V_{\F},0,\underline{a},\underline{b},R_1 ,R_2}(\F)$. 
Then we have 
$r_{1,0,j} =-v_u (v_j) \in \mathbb{Z}$ for 
$n_{1,0} \leq j \leq n+1$ and 
$r_{2,0,j} =-v_u (v_j) \in \mathbb{Z}$ for 
$2 \leq j \leq n_{2,0}$. 
It remains to show that 
$n_{2,i} <n_{1,i}$. 
We have 
$n_{2,i} \leq n_{2,0}$ and 
$n_{1,0} \leq n_{1,i}$, 
because 
$r_{1,i,j} \leq r_{1,0,j}$ and 
$r_{2,i,j} \leq r_{2,0,j}$ 
for $2 \leq j \leq n+1$. 
So it suffices to show 
$n_{2,0,j} < n_{1,0,j}$. 
If $n_{2,0,j} \geq n_{1,0,j}$, we have 
\[
 a_1 =b_1 -pv_u (v_1) -v_u (v_2) 
 \textrm{ and } 
 a_j -v_u (v_{j+1} )=b_j -v_u (v_j) 
 \textrm{ for } 
 2 \leq j \leq n, 
\]
and this contradicts $(\diamondsuit)$. 

We put 
\[
 M_{\underline{a},\underline{b},R_1 ,R_2} = 
 \bigl\{ 0 \leq i \leq m_{\underline{a},\underline{b}} 
 \bigm| r_{1,i,j} \in \mathbb{Z} \textrm{ for } 
 n_{1,i} \leq j \leq n+1 
 \bigr\}.
\]
For $(v_i )_{1 \leq i \leq n}$ that gives 
a point of 
$\mathscr{GR}_{V_{\F},0,\underline{a},\underline{b},R_1 ,R_2}(\F')$, 
we take $\gamma_{1,i}$, $\gamma_{2,i}$ and 
$n_{1,i}$, $n_{2,i}$, $r_{1,i,j}$, $r_{2,i,j}$ as above. 
We note that 
$\gamma_{1,i} =0$ if 
$i \notin M_{\underline{a},\underline{b},R_1 ,R_2}$. 
We put 
\begin{align*}
 M_{1,\underline{a},\underline{b},R_1 ,R_2 ,j} &= 
 \bigl\{ 0 \leq i \leq m_{\underline{a},\underline{b}} 
 \bigm| n_{1,i} \leq j \leq n+1 \bigr\},\\
 M_{2,\underline{a},\underline{b},R_1 ,R_2 ,j} &= 
 \bigl\{ 0 \leq i \leq m_{\underline{a},\underline{b}} 
 \bigm| 2 \leq j \leq n_{2,i} \bigr\} 
\end{align*}
for $2 \leq j \leq n+1$, 
and define 
$(v_i ^{\ast} )_{1 \leq i \leq n} \in \F' ((u))^n$ 
by 
\[
 v_j ^{\ast} =v_j -
 \sum_{i \in M_{1,\underline{a},\underline{b},R_1 ,R_2 ,j}} 
 \gamma_{1,i} u^{-r_{1,i,j}} -
 \sum_{i \in M_{2,\underline{a},\underline{b},R_1 ,R_2 ,j}}
 \gamma_{2,i} u^{-r_{2,i,j}} 
\] 
for $2 \leq j \leq n+1$. 
This is well-defined by the above remark. 
We put 
\begin{align*}
 \widetilde{N}_{\underline{a},\underline{b},R_1 ,R_2,\F'} ^{\ast} &= 
 \bigl\{
 (v_i ^{\ast})_{1 \leq i \leq n} \in \F'((u))^n \bigm| 
 (v_i )_{1 \leq i \leq n} \in \F'((u))^n  
 \textrm{ gives} \\
 & \hspace{12em}
 \textrm{a point of } 
 \mathscr{GR}_{V_{\F},0,\underline{a},\underline{b},R_1 ,R_2}(\F') 
 \bigr\}. 
\end{align*}
Then we have 
\begin{align*}
 \widetilde{N}_{\underline{a},\underline{b},R_1 ,R_2,\F'} ^{\ast} = 
 \bigl\{
 (v_i )_{1 \leq i \leq n} \in \F'((u))^n \bigm| 
 & -\!v_u (v_1 ) \leq R_1 -m_{\underline{a},\underline{b}} -1, \\ 
 & -\!v_u (v_2 ) \leq R_2 - \max \{-a_1 ,b_1 -e\}, 
 (C_2)
 \bigr\}
\end{align*}
by the construction of $(v_i ^{\ast})_{1 \leq i \leq n}$ and 
the conditions $(C_1 )$ and $(C_2 )$. 
This implies that 
$\widetilde{N}_{\underline{a},\underline{b},R_1 ,R_2,\F'} ^{\ast}
 \subset \F'((u))^n$ 
is an $\F'$-vector subspace, 
and 
$\widetilde{N}_{\underline{a},\underline{b},R_1 ,R_2,\F'} ^{\ast}
 \supset \F'[[u]]^n$. 

We put 
\[
 N_{\underline{a},\underline{b},R_1 ,R_2,\F'} ^{\ast}=
 \widetilde{N}_{\underline{a},\underline{b},R_1 ,R_2,\F'} ^{\ast}
 \big/ \F'[[u]]^n 
\] 
and 
$d_{\underline{a},\underline{b},R_1 ,R_2} ^{\ast} =
 \dim_{\F'} N_{\underline{a},\underline{b},R_1 ,R_2,\F'} ^{\ast}$. 
We note that 
$\dim_{\F'} N_{\underline{a},\underline{b},R_1 ,R_2,\F'} ^{\ast}$ 
is independent of finite extensions $\F'$ of $\F$. 
By Lemma \ref{equiv}, 
giving 
an element of 
$N_{\underline{a},\underline{b},R_1 ,R_2,\F'} ^{\ast}$ and 
$(\gamma_{1,i})_{0 \leq i \leq
 m_{\underline{a},\underline{b}}}$ 
such that 
$\gamma_{1,0} \neq 0$ and 
$\gamma_{1,i} =0$ if 
$i \notin M_{\underline{a},\underline{b},R_1 ,R_2}$
is equivalent to giving 
a point of 
$\mathscr{GR}_{V_{\F},0,\underline{a},\underline{b},R_1 ,R_2}(\F')$. 
By choosing a basis of 
$N_{\underline{a},\underline{b},R_1 ,R_2,\F} ^{\ast}$ over $\F$, 
we have a morphism 
\[
 f_{\underline{a},\underline{b},R_1 ,R_2} 
 :\mathbb{A}_{\F} ^{\bigl( 
 d_{\underline{a},\underline{b},R_1 ,R_2} ^{\ast} 
 + |M_{\underline{a},\underline{b},R_1 ,R_2}| -1 \bigr)}
 \times 
 \mathbb{G}_{m,\F}
 \to 
 \mathscr{GR}_{V_{\F},0} 
\]
such that 
$f_{\underline{a},\underline{b},R_1 ,R_2} (\F')$ 
is injective and 
the image of 
$f_{\underline{a},\underline{b},R_1 ,R_2} (\F')$ 
is equal to
$\mathscr{GR}_{V_{\F},0,\underline{a},\underline{b},R_1 ,R_2}(\F')$. 
We put 
$d_{\underline{a},\underline{b},R_1 ,R_2} = 
 d_{\underline{a},\underline{b},R_1 ,R_2} ^{\ast} 
 +|M_{\underline{a},\underline{b},R_1 ,R_2}|$. 
Then we have (1) and 
\[
 d_{V_{\F}} =
 \max_{
 \mathscr{GR}_{V_{\F},0,\underline{a},\underline{b},R_1 ,R_2}(\F) 
 \neq \emptyset } 
 \bigl\{ d_{\underline{a},\underline{b},R_1 ,R_2} \bigr\}. 
\]
In this maximum, we consider all 
$(\underline{a},\underline{b}) \in 
 \mathbb{Z}^n \times \mathbb{Z}^n$. 
We have already examined 
$d_{\underline{a},\underline{b},R_1 ,R_2}$ 
for $(\underline{a},\underline{b})$ 
such that $a_1 \geq 0$ and 
$b_1 \leq e$. 
So it suffices to bound 
$d_{\underline{a},\underline{b},R_1 ,R_2}$ 
for $(\underline{a},\underline{b})$ 
such that 
$\max \{-a_1 ,b_1 -e\} > 0$. 

\begin{lem}\label{negative}
If $\max \{-a_1 ,b_1 -e\} > 0$, 
the followings hold: 
\begin{enumerate}
\item[(a)]
In the case $e_1 =0$, 
we have $d_{\underline{a},\underline{b},R_1 ,R_2} \leq ne_0 -1$. 
\item[(b)] 
In the case $1 \leq e_1 \leq p-1$, 
we have $d_{\underline{a},\underline{b},R_1 ,R_2} \leq ne_0$. 
\item[(c)] 
In the case $e_1 = p$, 
we have 
$d_{\underline{a},\underline{b},R_1 ,R_2} \leq ne_0 +[n/2]$. 
\end{enumerate}
\end{lem}

\begin{proof}
We put
\begin{align*}
 S_{\underline{a},\underline{b},R_1 ,R_2 ,1} &=\bigl\{
 (v_1 ,0,\ldots ,0) \in \F((u))^n 
 \bigm| v_1 =u^{-r_1},\\ 
 &\hspace{12.4em} 1 \leq r_1 \leq 
 \min \{
 R_1 -m_{\underline{a},\underline{b}} -1, 
 a_{n} ,e-b_{n} 
 \} \bigr\},\\
 S_{\underline{a},\underline{b},R_1 ,R_2 ,2} &=\Biggl\{
 (0,v_2 ,0, \ldots ,0) \in \F((u))^n 
 \Biggm| v_2 =u^{-r_2}, \\
 &\hspace{7.4em} 1 \leq r_2 \leq 
 \min \biggl\{
 R_2 -\max \{-a_1 ,b_1 -e\},
 \frac{e-a_2}{p},\frac{b_2}{p} 
 \biggr\} \Biggr\},\\ 
 S_{\underline{a},\underline{b},R_1 ,R_2 ,i} &=\Biggl\{
 (0,\ldots ,0,v_i ,0, \ldots ,0) \in \F((u))^n 
 \Biggm|  v_i =u^{-r_i}, \\
 &\hspace{12em} 1 \leq r_i \leq 
 \min \biggl\{
 a_{i-1} ,e-b_{i-1} ,\frac{e-a_i}{p},\frac{b_i}{p} 
 \biggr\} \Biggr\} \\
\intertext{for $3 \leq i \leq n$, and} 
 S_{\underline{a},\underline{b},R_1 ,R_2 ,i,j} &=\Biggl\{
 (0,\ldots ,0,v_i ,v_{i+1},\ldots ,v_{j+1} ,0,\ldots ,0) \in \F((u))^n 
 \Biggm| v_i =u^{-r_i},\ \\ 
 & \hspace{-1.2em}
 \ r_i \leq \min \{a_{i-1} ,e-b_{i-1} \}
 \textrm{ if } i \neq 2,\ 
 r_2 \leq R_2 -\max \{-a_1 ,b_1 -e\} 
 \textrm{ if } i=2, \\ 
 & \hspace{1.15em}
 \ u^{a_{l}} v_{l+1} = 
 u^{b_{l}} \phi(v_{l}) 
 \textrm{ and }
 -\!v_u (v_{l+1} )> 
 \min \{a_{l} ,e-b_{l} \} 
 \textrm{ for } 
 i \leq l \leq j, \\ 
 & \hspace{-4.8em}
 -\!v_u (v_{j+1}) \leq 
 \min \biggl\{
 \frac{e-a_{j+1}}{p},\frac{b_{j+1}}{p} 
 \biggr\} \textrm{ if } j \neq n,\ 
 -v_u (v_1) \leq R_1 -m_{\underline{a},\underline{b}} -1 
 \textrm{ if } j=n 
 \Biggr\} 
\end{align*} 
for $2 \leq i \leq j \leq n$. 
In the above definitions, $v_i$ is on the $i$-th component. 
Then, as in the proof of Lemma \ref{basis}, 
we can check that 
$\bigcup_{i} S_{\underline{a},\underline{b},R_1 ,R_2 ,i} 
 \cup \bigcup_{i,j} S_{\underline{a},\underline{b},R_1 ,R_2 ,i,j}$ 
is an $\F$-basis of $N_{\underline{a},\underline{b},R_1 ,R_2,\F} ^{\ast}$. 
So we have 
$d_{\underline{a},\underline{b},R_1 ,R_2} ^{\ast}= 
 \sum_i |S_{\underline{a},\underline{b},R_1 ,R_2 ,i}| + 
 \sum_{i,j} |S_{\underline{a},\underline{b},R_1 ,R_2 ,i,j}|$. 

We put 
\begin{align*}
 T_{\underline{a},\underline{b},R_1 ,R_2 ,1} &=\bigl\{
 m \in \mathbb{Z} \bigm| 
 \min \{a_{n} ,e-b_{n} \} < pm+a_n -b_n 
 \leq R_1 -m_{\underline{a},\underline{b}} -1 
 \bigr\},\\ 
 T_{\underline{a},\underline{b},R_1 ,R_2 ,2} &=\Biggl\{
 m \in \mathbb{Z} \Biggm| 
 R_2 -\max \{-a_1 , b_1 -e\} 
 < R_2 -pm \\ 
 &\hspace{15.85em}\leq 
 \min \biggl\{ R_2 , 
 \frac{e-a_2 }{p} ,
 \frac{b_2 }{p}
 \biggr\}
 \Biggr\} \\
\intertext{and} 
 T_{\underline{a},\underline{b},R_1 ,R_2 ,i} &=\Biggl\{
 m \in \mathbb{Z} \Biggm| 
 \min \{a_{i-1} ,e-b_{i-1} \} 
 < pm+a_{i-1} -b_{i-1} \\
 & \hspace{14.28em}
 \leq \min \biggl\{
 \frac{e-a_i }{p}, \frac{b_i }{p} 
 \biggr\} \Biggr\} 
\end{align*}
for $3 \leq i \leq n$. 
We note that these definitions for 
$S_{\underline{a},\underline{b},R_1 ,R_2 ,i}$, 
$S_{\underline{a},\underline{b},R_1 ,R_2 ,i,j}$ and 
$T_{\underline{a},\underline{b},R_1 ,R_2 ,i}$ 
in the case $\max \{-a_1 ,b_1 -e\} >0$ 
are compatible with the definitions 
in the case $\max \{-a_1 ,b_1 -e\} \leq 0$, 
if $\max \{-a_1 ,b_1 -e\} =0$. 
So in the following, we can consider 
also the case $\max \{-a_1 ,b_1 -e\} =0$. 
We need to consider this case in the following arguments. 
 
We consider the map
\begin{align*}
 \bigcup_{2 \leq j \leq h-1} 
 S_{\underline{a},\underline{b},R_1 ,R_2 ,j,h-1} 
 \cup \bigl\{ 0\leq i \leq m_{\underline{a},\underline{b}}
 \bigm| n_{2,i} =h \bigr\}
 &\to T_{\underline{a},\underline{b},R_1 ,R_2 ,h} ;\\ 
 (v_{i})_{1 \leq i \leq n} &\mapsto -v_u (v_{h-1}) ,\ 
 i \mapsto r_{2,i,h-1}
\end{align*}
for $3 \leq h \leq n+1$. 
We can easily check that this map is injective 
and that 
\[
 \bigl\{ 0\leq i \leq m_{\underline{a},\underline{b}}
 \bigm| n_{2,i} =2 \bigr\} =
 T_{\underline{a},\underline{b},R_1 ,R_2 ,2}. 
\]
So we have 
$\Bigl( \sum_{2 \leq \i \leq j \leq n} 
 |S_{\underline{a},\underline{b},R_1 ,R_2 ,i,j}| \Bigr)
 + m_{\underline{a},\underline{b}} +1 
 \leq \sum_{1 \leq i \leq n} 
 |T_{\underline{a},\underline{b},R_1 ,R_2 ,i}|$ 
and 
\[
 d_{\underline{a},\underline{b},R_1 ,R_2} \leq 
 d_{\underline{a},\underline{b},R_1 ,R_2} ^{\ast} 
 + m_{\underline{a},\underline{b}} +1 \leq 
 \sum_{1 \leq i \leq n} \bigl( 
 |S_{\underline{a},\underline{b},R_1 ,R_2 ,i}| + 
 |T_{\underline{a},\underline{b},R_1 ,R_2 ,i}| \bigr). 
\]

We take 
$(\underline{a}'',\underline{b}'') 
 \in \mathbb{Z}^n \times \mathbb{Z}^n$ 
and $(R_1 '' ,R_2 '') \in I_{\underline{a}'',\underline{b}''}$  
such that $\max \{-a_1 '' ,e-b_1 ''\} \geq 0$ and 
$\sum_{1 \leq i \leq n} \bigl( 
 |S_{\underline{a}'',\underline{b}'',R_1 '' ,R_2 '' ,i}| +
 |T_{\underline{a}'',\underline{b}'',R_1 '' ,R_2 '' ,i}| \bigr)$
is the maximum. 
We can prove that 
$|T_{\underline{a}'',\underline{b}'',R_1 '' ,R_2 '' ,i}| \leq 1$ 
for all $i \neq 2$ as in the proof 
of Lemma \ref{boundT}. 

We show that 
we may take 
$(\underline{a}'',\underline{b}'') 
 \in \mathbb{Z}^n \times \mathbb{Z}^n$ 
and $(R_1 '' ,R_2 '') \in I_{\underline{a}'',\underline{b}''}$ 
such that 
$0 \leq -a_1 '' =b_1 '' -e\leq p-1$. 
If $-a_1 '' >b_1 ''-e$, then 
we replace 
$b_1 ''$ by $b_1 '' +1$ 
and $R_2 ''$ by $R_2 '' +1$.  
We again have 
$(R_1 '' ,R_2 '') \in I_{\underline{a}'',\underline{b}''}$
after the replacement. 
This replacement increases 
$\sum_{1 \leq i \leq n} \bigl( 
 |S_{\underline{a}'',\underline{b}'',R_1 '' ,R_2 '' ,i}| +
 |T_{\underline{a}'',\underline{b}'',R_1 '' ,R_2 '' ,i}| \bigr)$ 
by $0$ or $1$, 
but by the maximality 
there is no case where 
it increases by $1$. 
Similarly, if $-a_1 '' <b_1 ''-e$, 
we may replace 
$a_1 ''$ by $a_1 '' -1$ and 
$R_2 ''$ by $R_2 '' +1$. 
So we may assume $-a_1 '' =b_1 ''-e$. 

If $-a_1 '' \geq p$ and 
$\min \{b_2 '' /p, (e-a_2 '')/p\} \geq R_2 ''$, 
we replace 
$R_1 ''$ by $R_1 '' -1$ and 
$R_2 ''$ by $R_2 '' +p$. 
By 
\[
 R_2 '' +p \leq \frac{e}{p} +p < 
 e+p \leq e-a_1 ''=b_1 '', 
\]
we again have 
$(R_1 '' ,R_2 '') \in I_{\underline{a}'',\underline{b}''}$
after the replacement. 
This replacement increases 
$\sum_{1 \leq i \leq n} \bigl( 
 |S_{\underline{a}'',\underline{b}'',R_1 '' ,R_2 '' ,i}| +
 |T_{\underline{a}'',\underline{b}'',R_1 '' ,R_2 '' ,i}| \bigr)$ 
by at least $p-2$. 
This is a contradiction. 
So if $-a_1 '' \geq p$, we have 
$\min \{b_2 '' /p, (e-a_2 '')/p\}<R_2 ''$. 
If $-a_1 '' \geq p$, 
we replace $a_1 ''$ by $a_1 '' +p$, 
$b_1 ''$ by $b_1 '' -p$, 
$R_1 ''$ by $R_1 '' -1$ and 
$R_2 ''$ by $R_2 '' -p$. 
We again have 
$(R_1 '' ,R_2 '') \in I_{\underline{a}'',\underline{b}''}$
after the replacement. 
This replacement does not 
change 
$\sum_{1 \leq i \leq n} \bigl( 
 |S_{\underline{a}'',\underline{b}'',R_1 '' ,R_2 '' ,i}| +
 |T_{\underline{a}'',\underline{b}'',R_1 '' ,R_2 '' ,i}| \bigr)$. 
Iterating these replacements, 
we may assume 
$0 \leq -a_1 '' =b_1 '' -e\leq p-1$. 
We already treated the case where 
$-a_1 '' =b_1 '' -e=0$. 
So we may assume 
$1 \leq -a_1 '' =b_1 '' -e\leq p-1$. 
We note that
$|T_{\underline{a}'',\underline{b}'',R_1 '' ,R_2 '' ,2}| \leq 1$ 
in this case. 

Now we can show that 
\begin{enumerate}
 \item[$(A_i ')$] 
if 
$|S_{\underline{a}'',\underline{b}'',R_1 '' ,R_2 '' ,i}| +
 |T_{\underline{a}'',\underline{b}'',R_1 '' ,R_2 '' ,i}| 
 =e_0 +l$ 
for $l \geq 1$,\\ 
then 
$|S_{\underline{a}'',\underline{b}'',R_1 '' ,R_2 '' ,i+1}| +
 |T_{\underline{a}'',\underline{b}'',R_1 '' ,R_2 '' ,i+1}| 
 \leq e_0 +e_1 -pl+1$ 
\end{enumerate}
for $i \neq 1$, and that 
\begin{enumerate}
 \item[$(B_i ')$] 
if 
$|S_{\underline{a}'',\underline{b}'',R_1 '' ,R_2 '' ,i}| +
 |T_{\underline{a}'',\underline{b}'',R_1 '' ,R_2 '' ,i}| 
 =e_0 +1$\\ 
and 
$|S_{\underline{a}'',\underline{b}'',R_1 '' ,R_2 '' ,i+1}| +
 |T_{\underline{a}'',\underline{b}'',R_1 '' ,R_2 '' ,i+1}| 
 = e_0 +e_1 -p+1$,\\ 
then 
$|S_{\underline{a}'',\underline{b}'',R_1 '' ,R_2 '' ,i+2}| +
 |T_{\underline{a}'',\underline{b}'',R_1 '' ,R_2 '' ,i+2}| 
 \leq e_0 -(p-1)e_1 +1$
\end{enumerate}
for $2 \leq i \leq n-1$
as in the proof of 
Lemma \ref{AB}. 
By the same argument, 
we can show that 
\begin{enumerate}
 \item[$(A_1 ')$] 
if 
$|S_{\underline{a}'',\underline{b}'',R_1 '' ,R_2 '' ,1}| +
 |T_{\underline{a}'',\underline{b}'',R_1 '' ,R_2 '' ,1}|
 =e_0 +l$ 
for $l \geq 0$,\\ 
then  
$|S_{\underline{a}'',\underline{b}'',R_1 '' ,R_2 '' ,2}| +
 |T_{\underline{a}'',\underline{b}'',R_1 '' ,R_2 '' ,2}| 
 \leq e_0 +e_1 -pl$, 
\end{enumerate}
and that 
\begin{enumerate}
 \item[$(B_n ')$] 
if 
$|S_{\underline{a}'',\underline{b}'',R_1 '' ,R_2 '' ,n}| +
 |T_{\underline{a}'',\underline{b}'',R_1 '' ,R_2 '' ,n}| 
 =e_0 +1$\\ 
and 
$|S_{\underline{a}'',\underline{b}'',R_1 '' ,R_2 '' ,1}| +
 |T_{\underline{a}'',\underline{b}'',R_1 '' ,R_2 '' ,1}| 
 = e_0 +e_1 -p+1$,\\ 
then 
$|S_{\underline{a}'',\underline{b}'',R_1 '' ,R_2 '' ,2}| +
 |T_{\underline{a}'',\underline{b}'',R_1 '' ,R_2 '' ,2}| 
 \leq e_0 -(p-1)e_1$, 
\end{enumerate} 
using the followings: 
\begin{align*}
 &|S_{\underline{a}'',\underline{b}'',R_1 '' ,R_2 '' ,1}| +
 |T_{\underline{a}'',\underline{b}'',R_1 '' ,R_2 '' ,1}| 
 \leq R_1 -1,\ 
 pR_1 +R_2 = e-2a_1 '',\\ 
 &|S_{\underline{a}'',\underline{b}'',R_1 '' ,R_2 '' ,2}| 
 \leq R_2 +a_1 '',\ 
 1 \leq -a_1 '' \leq p-1 
 \textrm{ and } 
 |T_{\underline{a}'',\underline{b}'',R_1 '' ,R_2 '' ,2}|\leq 1. 
\end{align*} 

Then $(A_i ')$ for all $i$ and 
$(B_i ')$ for $i \neq 1$ 
implies that 
\[
 \sum_{1 \leq i \leq n} \bigl( 
 |S_{\underline{a}'',\underline{b}'',R_1 '' ,R_2 '' ,i}| + 
 |T_{\underline{a}'',\underline{b}'',R_1 '' ,R_2 '' ,i}| \bigr) 
 \leq ne_0 
\] 
in the case $0 \leq e_1 \leq p-2$, and that 
\[
 \sum_{1 \leq i \leq n} \bigl( 
 |S_{\underline{a}'',\underline{b}'',R_1 '' ,R_2 '' ,i}| + 
 |T_{\underline{a}'',\underline{b}'',R_1 '' ,R_2 '' ,i}| \bigr) 
 \leq ne_0 +
 \biggl[ \frac{n}{2} \biggr]
\]
in the case $e_1 =p-1$. 
It remains to eliminate the possibility of equality 
in the case $e_1 =0$. 

We assume that $e_1 =0$ and 
$\sum_{1 \leq i \leq n} \bigl( 
 |S_{\underline{a}'',\underline{b}'',R_1 '' ,R_2 '' ,i}| + 
 |T_{\underline{a}'',\underline{b}'',R_1 '' ,R_2 '' ,i}| \bigr) 
 = ne_0$. 
Then $(A_i ')$ for all $i$ 
implies that 
$|S_{\underline{a}'',\underline{b}'',R_1 '' ,R_2 '' ,i}| + 
 |T_{\underline{a}'',\underline{b}'',R_1 '' ,R_2 '' ,i}| =e_0$ 
for all $i$. 
Now we have 
\[
 e_0 =
 |S_{\underline{a}'',\underline{b}'',R_1 '' ,R_2 '' ,1}| + 
 |T_{\underline{a}'',\underline{b}'',R_1 '' ,R_2 '' ,1}| 
 \leq R_1 -1 
\]
and 
\[
 e_0-1 \leq |S_{\underline{a}'',\underline{b}'',R_1 '' ,R_2 '' ,2}| 
 \leq R_2 +a_1 ''.
\]
This implies 
$e+p-1-a_1 '' \leq pR_1 +R_2$. 
Because $pR_1 +R_2 =e-2a_1 ''$, 
this inequality happens only in the case 
$-a_1 ''=p-1$, and in this case 
the above inequalities become equality. 
So we have 
$e_0 -1 = |S_{\underline{a}'',\underline{b}'',R_1 '' ,R_2 '' ,2}|$ 
and $R_2 =e_0 +p-2$. 
By $|T_{\underline{a}'',\underline{b}'',R_1 '' ,R_2 '' ,2}|=1$, 
we have 
$R_2 \leq \min \{(e-a_2 '')/p,b_2 ''/p\}$. 
So we get 
$a_2 ''\leq e_0 -p(p-2) \leq e_0 -3$, 
but this contradicts 
$|S_{\underline{a}'',\underline{b}'',R_1 '' ,R_2 '' ,3}| 
 \geq e_0 -1$. 
Thus we have eliminated 
the possibility of equality 
in the case $e_1 =0$. 
\end{proof}
The claim $(2)$ follows from 
Lemma \ref{nonnegative} and 
Lemma \ref{negative}. 
\end{proof}

\begin{rem}\label{existence}
By Lemma \ref{stracture}, 
we can check that 
there is $V_{\F}$ satisfying 
the conditions for $M_{\F}$ 
in Proposition \ref{irreducible}. 
\end{rem}

\section{Main theorem}

To fix the notation, 
we recall the definition of the zeta function of 
a scheme of finite type over a finite field.

\begin{defn}
Let $X$ be a scheme of finite type over $\F$. 
We put $q_{\F} =|\F|$. 
The zeta function $Z(X;T)$ of $X$ is defined by 
\[
 Z(X;T)=\exp \Biggl( \sum_{m=1} ^{\infty} 
 \frac{\bigl| X(\F_{q_{\F} ^m} )\bigr| }{m} T^m \Biggr).
\]
Here, 
\[
 \exp \bigl( f(T) \bigr) =\sum_{m=0} ^{\infty} 
 \frac{1}{m!} f(T)^m \in \mathbb{Q} [[T]] 
\]
for $f(T) \in T\mathbb{Q} [[T]]$. 
\end{defn}

\begin{thm}
Let 
$Z(\mathscr{GR}_{V_{\F},0} ;T)$ 
be the zeta function of 
$\mathscr{GR}_{V_{\F},0}$. 
Then the followings are true. 
\begin{enumerate}
\item 
After extending the field $\F$ sufficiently, 
we have 
\[
 Z(\mathscr{GR}_{V_{\F},0} ;T) = 
 \prod_{i=0} ^{d_{V_{\F}}} 
 (1-|\F|^i T)^{-m_i} 
\]
for some $m_i \in \mathbb{Z}$ such that $m_{d_{V_{\F}}} >0$. 
\item 
If $n=1$, 
we have 
\[
 0 \leq 
 d_{V_{\F}} \leq  
 \biggl[ \frac{e+2}{p+1} \biggr]. 
\]
If $n \geq 2$, 
we have 
\[
 0 \leq 
 d_{V_{\F}} \leq 
 \biggl[ \frac{n+1}{2} \biggr]
 \biggl[ \frac{e}{p+1} \biggr]+ 
 \biggl[ \frac{n-2}{2} \biggr] 
 \biggl[ \frac{e+1}{p+1} \biggr]+ 
 \biggl[ \frac{e+2}{p+1} \biggr]. 
\]

Furthermore, each equality in the above inequalities 
can happen for any finite extension $K$ 
of $\mathbb{Q}_p$. 
\end{enumerate}
\end{thm}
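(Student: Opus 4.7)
The plan is to assemble the theorem directly from Propositions \ref{reducible} and \ref{irreducible}, which already give the analogous statements in the absolutely reducible and absolutely irreducible cases respectively.

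For part (1), I would pass to a finite extension of $\F$ large enough that the point-count formula $|\mathscr{GR}_{V_{\F},0}(\F')| = \sum_{i=0}^{d_{V_{\F}}} m_i |\F'|^i$ with $m_{d_{V_{\F}}} > 0$ holds for every sufficiently large $\F'$; this is provided by Proposition \ref{reducible}(1) or Proposition \ref{irreducible}(1) as appropriate. Substituting into the definition of the zeta function and applying the identity
\[
 \exp \Biggl( \sum_{m=1}^{\infty} \frac{|\F|^{im}}{m} T^m \Biggr) = (1 - |\F|^i T)^{-1}
\]
factor by factor yields the claimed product expression, with the exponent $m_{d_{V_{\F}}}$ strictly positive.

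For part (2), I would write $e = (p+1)e_0 + e_1$ with $0 \leq e_1 \leq p$ and split into the three ranges $0 \leq e_1 \leq p-2$, $e_1 = p-1$, and $e_1 = p$, matching the stratification of Propositions \ref{reducible}(2) and \ref{irreducible}(2). In each range, taking the maximum of the two propositional bounds yields $ne_0$, $ne_0 + 1$, and $ne_0 + \max\{[n/2], 1\}$, respectively (the deficiency by one in Proposition \ref{irreducible}(2)(a) is absorbed by the reducible bound). I would then verify case by case that these values coincide with $[(e+2)/(p+1)]$ when $n = 1$ and with
\[
 \biggl[ \frac{n+1}{2} \biggr] \biggl[ \frac{e}{p+1} \biggr] + \biggl[ \frac{n-2}{2} \biggr] \biggl[ \frac{e+1}{p+1} \biggr] + \biggl[ \frac{e+2}{p+1} \biggr]
\]
when $n \geq 2$, using the elementary identities $[(n+1)/2] + [(n-2)/2] = n - 1$ and $[(n-2)/2] + 1 = [n/2]$ valid for $n \geq 2$, together with the values of $[(e+c)/(p+1)]$ for $c = 0,1,2$ determined by $e_1$.

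For the assertion that each equality is attained over every $K$, the explicit matrix prescriptions displayed in Propositions \ref{reducible}(2) and \ref{irreducible}(2) do the job: in the reducible cases such $M_{\F}$ come from extensions of characters of $G_K$, which exist for any $K$, and in the absolutely irreducible cases Remark \ref{existence}, via Lemma \ref{stracture}, produces a corresponding $V_{\F}$. The main obstacle is not conceptual but purely combinatorial, namely reconciling the $e_1$-stratification in the propositions with the floor-function stratification in the theorem; the small-$n$ cases and the boundary values $e_1 = p-1, p$ will need the most careful bookkeeping.
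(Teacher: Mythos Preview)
Your argument is essentially the paper's: assemble the theorem from Proposition~\ref{reducible}, Proposition~\ref{irreducible}, and Remark~\ref{existence}. The zeta-function computation in (1) and the combinatorial matching in (2) are fine, and the identities $[\frac{n+1}{2}]+[\frac{n-2}{2}]=n-1$ and $[\frac{n-2}{2}]+1=[\frac{n}{2}]$ (for $n\geq 2$) do exactly what you need.

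There is one genuine omission. The phrase ``each equality in the above inequalities can happen for any finite extension $K$'' refers to \emph{both} ends of $0\leq d_{V_{\F}}\leq [\,\cdots\,]$. Your proposal addresses only the upper equality via the explicit $M_{\F}$'s in Propositions~\ref{reducible}(2) and~\ref{irreducible}(2). For the lower equality you must exhibit, for every $K$ (in particular for $K$ with arbitrarily large $e$), some $V_{\F}$ with $d_{V_{\F}}=0$. This is precisely the content of Proposition~\ref{onepoint}, which the paper cites but you do not: the module $M_{\F}\sim\Bigl(\begin{smallmatrix} u^e & u\\ 0 & 1\end{smallmatrix}\Bigr)_i$ exists for any $K$ and has $\mathscr{GR}_{V_{\F},0}(\F')$ a single point for every $\F'$, hence $d_{V_{\F}}=0$. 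Without this ingredient the ``furthermore'' clause is not established.
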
 

\begin{proof} 
This follows from 
Proposition \ref{onepoint}, 
Proposition \ref{reducible}, 
Proposition \ref{irreducible} and 
Remark \ref{existence}. 
\end{proof}


\begin{thebibliography}{Ray}

\bibitem[Ima]{Ima}
N.~Imai, 
\emph{On the connected components 
 of moduli spaces of finite flat models}, 
Amer.\ J.\ of Math.\ \textbf{132} (2010), no.~5, 1189--1204. 

\bibitem[Kis]{Kis}
M.~Kisin, 
\emph{Moduli of finite flat group schemes, and modularity}, 
Ann.\ of Math.\ (2) \textbf{170} (2009), no.~3, 1085--1180. 

\bibitem[Ray]{Ray}
M.~Raynaud, 
\emph{Sch\'{e}mas en groupes de type $(p,\ldots,p)$}, 
Bull.\ Soc.\ Math.\ France \textbf{102} (1974), 241--280.

\end{thebibliography}
\end{document}